\documentclass[a4paper]{amsart}

\usepackage[all,cmtip]{xy}
\usepackage{a4wide}
\usepackage{verbatim}
\usepackage{amsmath}
\usepackage{amssymb}
\usepackage{amsthm}
\usepackage{latexsym}
\usepackage{enumerate}
\usepackage{prettyref}
\usepackage{hyperref}
\usepackage{bm}
\usepackage{mathrsfs}
\usepackage[usenames,dvipsnames]{color}
\definecolor{dark-blue}{rgb}{0,0,0.5}
\usepackage{tikz}
\usetikzlibrary{matrix,arrows}
\usepackage[abbrev,lite,alphabetic]{amsrefs}
\usepackage{dsfont}
\usetikzlibrary{matrix,arrows}
\usepackage{tikz-cd}

\theoremstyle{plain}
\newtheorem{theorem}[subsection]{Theorem}
\newrefformat{thm}{\hyperref[{#1}]{Theorem~\ref*{#1}}}
\newtheorem{lemma}[subsection]{Lemma}
\newrefformat{lem}{\hyperref[{#1}]{Lemma~\ref*{#1}}}
\newtheorem{proposition}[subsection]{Proposition}
\newrefformat{prop}{\hyperref[{#1}]{Proposition~\ref*{#1}}}
\newtheorem{corollary}[subsection]{Corollary}
\newrefformat{cor}{\hyperref[{#1}]{Corollary~\ref*{#1}}}
\newtheorem{conjecture}[subsection]{Conjecture}
\newtheorem{notation}[subsection]{Notation}

\theoremstyle{remark}

\newtheorem{remark}[subsection]{Remark}
\newrefformat{rem}{\hyperref[{#1}]{Remark~\ref*{#1}}}
\newtheorem{examplecore}[subsection]{Example}
\newrefformat{ex}{\hyperref[{#1}]{Example~\ref*{#1}}}
\newtheorem{construction}[subsection]{Construction}
\newrefformat{cons}{\hyperref[{#1}]{Construction~\ref*{#1}}}

\theoremstyle{definition}
\newtheorem{definition}[subsection]{Definition}
\newrefformat{def}{\hyperref[{#1}]{Definition~\ref*{#1}}}

\newenvironment{example}{\begin{examplecore}}{\hspace*{\fill}
$\square$\par\vspace{.1cm}\end{examplecore}}

\newcommand{\op}{\operatorname}
\newcommand{\et}{\mathrm{\acute{e}t}}

\newcommand{\Spec}{\operatorname{Spec}}






\begin{document}

\title{$\mathbb{A}^1$-connected components of classifying spaces and purity for torsors}  

\author{Elden Elmanto, Girish Kulkarni and Matthias Wendt}

\date{March 2021}

\address{Elden Elmanto, Department of Mathematics, Harvard University, 1 Oxford St. Cambridge, MA 02138, USA}
\email{elmanto@math.harvard.edu}

\address{Girish Kulkarni, Fachgruppe Mathematik und Informatik, Bergische Universit\"at Wuppertal, Gau\ss{}\-strasse 20, 42119 Wuppertal, Germany}
\email{kulkarni@uni-wuppertal.de}

\address{Matthias Wendt, Fachgruppe Mathematik und Informatik, Bergische Universit\"at Wuppertal, Gau\ss{}\-strasse 20, 42119 Wuppertal, Germany}
\email{m.wendt.c@gmail.com}

\subjclass[2010]{}
\keywords{}

\begin{abstract}
  In this paper, we study the Nisnevich sheafification $\mathcal{H}^1_{\et}(G)$ of the presheaf associating to a smooth scheme the set of isomorphism classes of $G$-torsors, for a reductive group $G$. We show that if $G$-torsors on affine lines are extended, then $\mathcal{H}^1_{\et}(G)$ is homotopy invariant and show that the sheaf is unramified if and only if Nisnevich-local purity holds for $G$-torsors. We also identify the sheaf $\mathcal{H}^1_{\et}(G)$ with the sheaf of $\mathbb{A}^1$-connected components of the classifying space ${\rm B}_{\et}G$. This establishes the homotopy invariance of the sheaves of components as conjectured by Morel. It moreover provides a computation of the sheaf of $\mathbb{A}^1$-connected components in terms of unramified $G$-torsors over function fields whenever Nisnevich-local purity holds for $G$-torsors.
\end{abstract}

\maketitle
\setcounter{tocdepth}{1}
\tableofcontents

\section{Introduction}

\subsection{The Asok--Fasel program} One of the most compelling recent success in applying methods from homotopy theory to algebraic geometry is the Asok--Fasel program which attempts to classify vector bundles on smooth affine varieties by means of the Postnikov tower in Morel--Voevodsky's $\mathbb{A}^1$-homotopy theory \cites{asok-fasel-spheres, asok-fasel-3folds, asok-fasel-splitting}. Their methodology is based on the affine representability results of Morel \cite{MField} and Asok, Hoyois and the third author \cite{affine-representability-1} which establish results to the effect that vector bundles on various smooth affine schemes are computed as maps in the $\mathbb{A}^1$-homotopy category, thus accessible via the Postnikov tower \cites{morel:voevodsky,antieau:elmanto} and therefore understood in cohomological terms. A morphism
\[
\mathcal{X} \rightarrow \mathcal{Y}
\]
in the $\mathbb{A}^1$-homotopy category can be understood as the limit of a sequence of maps
\[
\mathcal{X} \rightarrow \tau_{\leq j}\mathcal{Y} \qquad j \geq 0,
\]
such that the obstruction to the extension problem:

\begin{equation}\label{eq:stage}
\begin{tikzcd} 
 & \tau_{\leq {j+1}}\mathcal{Y} \ar{d} \\
\mathcal{X} \ar[dashed]{ur}\ar{r} & \tau_{\leq j}\mathcal{Y},
\end{tikzcd}
\end{equation}
and their possible solutions lie in a Nisnevich cohomology group.

These ``affine representability" results, in turn, rely on the resolution of the Bass--Quillen conjecture in various cases, due to work of Lindel \cite{lindel} and Popescu \cite{popescu:polynomials}. More specifically, let us write $\mathcal{V}_r(R)$ to be the set of isomorphism classes of projective modules of rank $r$ on a ring $R$ (equivalently, rank $r$ vector bundles on $\Spec R$). It is proved in \cite{affine-representability-1}*{Theorem 5.2.1} that whenever $A$ is a ring such that for any maximal ideal $\mathfrak{m}$, $A_{\mathfrak{m}}$ is ind-smooth over a Dedekind ring with perfect residue fields, the map
\[
\mathcal{V}_r(A) \rightarrow \mathcal{V}_r(A[t_1,\cdots, t_j]),
\]
for any $j, r \geq 0$ is an isomorphism. Thus, in particular, $\mathbb{A}^1$-invariance  holds for vector bundles on smooth affines schemes over perfect fields.

Since $\mathcal{V}_r(A)$ is isomorphic to the set of ${\rm GL}_r$-torsors on $\Spec A$, it is natural to ask if the Asok--Fasel program can be used to study torsors under other reductive groups. For us, $G$-torsors are required to have local triviality with respect to the \'etale topology and hence isomorphism classes of $G$-torsors over a scheme $X$ are in (pointed) bijection with the (pointed) Galois cohomology set ${\rm H}^1_{\et}(X;G)$ (pointed at the isomorphism class of the  constant torsor $G \times X \rightarrow X$). The analogous picture for ${\rm H}^1_{\rm Nis}(X;G)$ (where we require that torsors exhibit \emph{Nisnevich-local} triviality) has been discussed in \cite{affine-representability-2} where affine representability was proved for isotropic reductive groups. In particular, whenever the Nisnevich and \'etale cohomological sets coincide (for example, when $G = {\rm SL}_n, {\rm GL}_n, {\rm Sp}_{2n})$, we can run the Asok--Fasel program to classify torsors under these groups. However, this coincidence between \'etale and Nisnevich torsors is not the ``generic situation", and \'etale-locally trivial torsors are certainly more abundant in algebraic geometry --- for example Nisnevich-locally trivial torsors over fields are all constant, a fact which is wildly false for \'etale-locally trivial torsors, e.g. for groups like ${\rm O}(n)$ (related to quadratic forms) or ${\rm PGL}_n$ (related to central simple algebras).

On first glance, it seems that the Asok--Fasel program will not have much to say about \'etale--locally trivial torsors (although frequently ``stably trivial'' torsors happen to be Nisnevich-locally trivial). Indeed, we have known for a long time that on a fixed affine scheme $X$, isomorphism classes of torsors under an arbitrary reductive group over $X$ are not generally $\mathbb{A}^1$-invariant, i.e., the map of pointed sets
\begin{equation}\label{eq:bass-quillen}
{\rm H}^1_{\et}(X, G) \rightarrow {\rm H}^1_{\et}(X \times \mathbb{A}^1, G),
\end{equation}
need not be an isomorphism. Since pullback along the zero section provides a section of the map~\eqref{eq:bass-quillen} what is at stake is the surjectivity of this map, i.e., whether or not a $G$-torsor on $X \times \mathbb{A}^1$ is \emph{extended} from $X$. In other words, the analog of the Bass--Quillen conjecture fails and there is no hope to represent torsors under $G$ as maps in the $\mathbb{A}^1$-homotopy category.

Our first result states that, in fact, the failure of $\mathbb{A}^1$-invariance \emph{disappears} after Nisnevich sheafification. To formulate it, we fix a field $k$ and let us write $\mathcal{H}^1_{\et}(G)$ for the Nisnevich sheafification of the presheaf on ${\rm Sm_k}$ given by
\[
U \mapsto {\rm H}^1_{\et}(U;G).
\]
\begin{theorem}[Propositions~\ref{prop:hoinv-h1etg},~\ref{prop:hoinv-on},~\ref{prop:hoinv-pgln}]\label{thm:cex}   Let $k$ be a field and $G$ be a $k$-group scheme which is either:
\begin{enumerate}
\item a connected reductive group if $k$ is of characteristic zero, or
\item ${\rm O}(n)$ if $k$ is characteristic not two, or
\item ${\rm PGL}_n$ if the characteristic of $k$ is coprime to $n$.
\end{enumerate}
Then, $\mathcal{H}^1_{\et}(G)$ is $\mathbb{A}^1$-invariant. 
\end{theorem}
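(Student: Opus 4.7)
My plan is to follow the strategy announced in the abstract: establish a general criterion reducing $\mathbb{A}^1$-invariance of $\mathcal{H}^1_{\et}(G)$ to an ``extension property'' for $G$-torsors on affine lines over essentially smooth henselian local rings, and then verify this extension property in each of the three listed cases by invoking classical extension theorems for torsors.

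\emph{The general criterion.} Since $\mathcal{H}^1_{\et}(G)$ is a Nisnevich sheaf, checking that $\mathcal{H}^1_{\et}(G)(X) \to \mathcal{H}^1_{\et}(G)(X \times \mathbb{A}^1)$ is a bijection for every smooth $X$ reduces to checking it on Nisnevich stalks, i.e., on $R$ and $R[t]$ for every essentially smooth henselian local $k$-algebra $R$. Because the Nisnevich site of $\Spec R$ is trivial, $\mathcal{H}^1_{\et}(G)(R) = {\rm H}^1_{\et}(R;G)$. The injectivity of the restriction map is automatic from the zero-section splitting, so the task is to prove surjectivity of
\[
{\rm H}^1_{\et}(R;G) \to \mathcal{H}^1_{\et}(G)(\mathbb{A}^1_R).
\]
Under the hypothesis that every \'etale $G$-torsor on $\mathbb{A}^1_R$ is pulled back from $R$, I would argue that any section of the sheafification on $\mathbb{A}^1_R$ is represented by an honest \'etale $G$-torsor, and hence, by the hypothesis, already comes from $R$.

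\emph{Application to the three cases.} Granted the criterion, each statement of the theorem reduces to the corresponding classical extension theorem for $G$-torsors on $\mathbb{A}^1_R$, for $R$ a regular local ring essentially smooth over $k$: for a connected reductive $G$ in characteristic zero, the extension theorem of Colliot-Th\'el\`ene--Sansuc and its refinements to arbitrary connected reductive groups over regular local rings containing $\mathbb{Q}$; for $G = {\rm O}(n)$ in characteristic $\neq 2$, a Horrocks-type theorem for quadratic spaces over $R[t]$ due to Karoubi, Ojanguren and Pardon; and for $G = {\rm PGL}_n$ in characteristic coprime to $n$, the analogous extension theorem for Azumaya algebras of degree $n$ on $\mathbb{A}^1_R$, available from work of Knus--Ojanguren, Hoobler and Parimala.

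\emph{Main obstacle.} The technical heart of the argument is the general criterion, not the applications. The subtle point is that while $\mathcal{H}^1_{\et}(G)$ agrees with the presheaf ${\rm H}^1_{\et}(-;G)$ on a henselian local $R$, the sheafification may \emph{a priori} introduce additional sections on the non-local scheme $\mathbb{A}^1_R$, whose Nisnevich site is nontrivial. I expect the hardest step to be a descent argument that excludes this possibility: namely, showing that any section of $\mathcal{H}^1_{\et}(G)$ on $\mathbb{A}^1_R$ is in fact represented by a genuine \'etale $G$-torsor, so that the extension hypothesis becomes applicable. Once this representability is in hand, the three cases follow by plugging in the appropriate classical extension theorem.
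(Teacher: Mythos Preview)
Your reduction to Nisnevich stalks is valid, and you have correctly located the serious difficulty: sections of $\mathcal{H}^1_{\et}(G)$ on $\mathbb{A}^1_R$ need not a priori come from honest torsors. However, the proposal has a genuine gap, and the paper's route is different in a way that matters.

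The main problem is that for case~(1) the extension theorem you invoke---that every $G$-torsor on $\mathbb{A}^1_R$ is extended from $R$ for $R$ regular local containing $\mathbb{Q}$ and $G$ an arbitrary connected reductive group---is not a known theorem; it is essentially a Bass--Quillen conjecture for $G$-torsors and is open for non-special (in particular anisotropic) groups. Colliot-Th\'el\`ene--Sansuc treat quadratic forms, not arbitrary reductive $G$. Even in cases~(2) and~(3), where local extension statements of this flavour may be available, your ``main obstacle'' remains unresolved: the paper only establishes surjectivity of ${\rm H}^1_{\et}(-,G)\to\mathcal{H}^1_{\et}(G)$ on essentially smooth schemes of dimension~$\leq 1$, which does not cover $\mathbb{A}^1_R$ once $\dim R\geq 1$, and you offer no mechanism to bridge this.

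The paper circumvents both problems by a different reduction. Rather than passing to stalks, it uses the \emph{strong Grothendieck--Serre property} (Fedorov--Panin, Panin) to show that for any irreducible smooth $X$ with function field $K$ the restriction $\mathcal{H}^1_{\et}(G)(X\times\mathbb{A}^1)\hookrightarrow\mathcal{H}^1_{\et}(G)(\Spec K(T))$ is injective. Homotopy invariance then reduces to the single claim that $\mathcal{H}^1_{\et}(G)(\Spec L)\to\mathcal{H}^1_{\et}(G)(\mathbb{A}^1_L)$ is bijective for finitely generated extensions $L/k$. Now $\mathbb{A}^1_L$ is one-dimensional, so the representability step goes through via a short Nisnevich-lexcision argument, and the required extension theorems are the classical ones over \emph{fields}: Raghunathan--Ramanathan (plus Steinberg) for case~(1), Harder's theorem for case~(2), and a short Brauer-group computation for case~(3). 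The Grothendieck--Serre reduction to function fields is the key idea missing from your plan.
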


To put our result in context, we note the following

\begin{example}[Parimala] \label{exam:parimala} In the remarkable paper \cite{parimala} constructed infinitely many, explicit rank $4$ inner product spaces on $\mathbb{R}[x,y]$ which are not extended from $\mathbb{R}$. Translating this problem to torsors, she showed that the map
\[
{\rm H}^1_{\et}(\mathbb{R} ;{\rm O}(4)) \rightarrow {\rm H}^1_{\et}(\mathbb{R}[x,y]; {\rm O}(4))
\]
is \emph{not} an isomorphism. Furthermore, by the last paragraph of \cite{parimala}*{Page 923} and the exceptional isomorphism ${\rm PGL}_2 = {\rm SO}(3)$, we can further conclude that the map
\[
{\rm H}^1_{\et}(\mathbb{R}; {\rm PGL}_2) \rightarrow {\rm H}^1_{\et}(\mathbb{R}[x,y]; {\rm PGL}_2)
\]
is \emph{not} an isomorphism. The reader is invited to consult \cite{lam:book}*{Chapter VII} for further discussions.

In fact, Parimala proved more. She showed that her torsors are constant, and actually extended from $\mathbb{R}$, away from a principal open $\mathbb{A}^2_{\mathbb{R}}$, see \cite{lam:book}*{Lemma VII.4.14}. Additionally, restricting her torsor to a local ring around each point of $\mathbb{A}^2_{\mathbb{R}}$ gives rise to a torsor which is extended from $\mathbb{R}$ \cite{parimala}*{Theorem 2.1(ii)}. These addendum to her results encouraged us to look for a result of the form Theorem~\ref{thm:cex}.

\end{example} 

%
%
%

Theorem~\ref{thm:cex} implies that, in particular, any counterexample to the surjectivity of~\eqref{eq:bass-quillen} must vanish on a Nisnevich cover of $X$. Theorem~\ref{thm:cex} is proved by the following principle: the validity of the Grothendieck--Serre conjecture for $G$ plus $\mathbb{A}^1$-invariance over fields implies Nisnevich-local $\mathbb{A}^1$-invariance; see Proposition~\ref{prop:lnis-hoinv} for a precise statement. 

\subsection{Towards motivic $G$-torsors and a conjecture of Morel's} To connect with the Asok--Fasel program, Theorem~\ref{thm:cex} serves as an input to the next result which belongs to $\mathbb{A}^1$-homotopy theory. To state it, let us recall some notation. Denote by ${\rm B}_{\et}G$ the classifying stack for the group $G$, so that evaluating this stack on a scheme $X$, the pointed set of connected component of the groupoid ${\rm B}_{\et}G(X)$ is canonically isomorphic to ${\rm H}^1_{\et}(X;G)$ (as pointed sets). Restricting ${\rm B}_{\et}G$ to the category of smooth schemes over a fixed field $k$ we can takes its motivic localization $L_{\rm mot}{\rm B}_{\et}G$ so that if $X$ is a smooth $k$-scheme, then the set of homotopy classes of maps from $X$ into $L_{\rm mot}{\rm B}_{\et}G$ computes maps in the $\mathbb{A}^1$-homotopy category: 
\[
\pi_0(L_{\rm mot}{\rm B}_{\et}G(X)) \cong [X, {\rm B}_{\et}G]_{\mathbb{A}^1}.
\]
We have a canonical map
\[
{\rm H}^1_{\et}(X, G) \rightarrow [X, B_{\et}G]_{\mathbb{A}^1} = \pi_0^{\mathbb{A}^1}(B_{\et}G)(X).
\]
which induces a canonical map of Nisnevich sheaves
\begin{equation}\label{eq:compare-map}
\mathcal{H}^1_{\et}(G)\to a_{\rm Nis}\pi_0^{\mathbb{A}^1}({\rm B}_{\et}G).
\end{equation}
In general, if $\mathcal{X}$ is a presheaf of spaces on ${\rm Sm}_k$, the Nisnevich sheaf $a_{\rm Nis}\pi_0^{\mathbb{A}^1}(\mathcal{X})$ is the \emph{sheaf of $\mathbb{A}^1$-connected components of $\mathcal{X}$}. A conjecture of Morel posits that $a_{\rm Nis}\pi_0^{\mathbb{A}^1}(\mathcal{X})$ is an $\mathbb{A}^1$-invariant sheaf. The next theorem furnishes new examples for the validity of this conjecture.

\begin{theorem}\label{thm:morel-g} Let $k$ be a field and $G$ be a group satisfying the hypotheses of Theorem~\ref{thm:cex}. Then:
\begin{enumerate}
\item the canonical map $\mathcal{H}^1_{\et}(G)\to a_{\rm Nis}\pi_0^{\mathbb{A}^1}({\rm B}_{\et}G)$ is an isomorphism of Nisnevich sheaves of pointed sets;
\item the sheaf $a_{\rm Nis}\pi_0^{\mathbb{A}^1}({\rm B}_{\et}G)$ is $\mathbb{A}^1$-invariant and is unramified in the sense of \cite{MField}*{Definition 2.1}. 
\end{enumerate}
\end{theorem}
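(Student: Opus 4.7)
The plan is to reduce both parts of the theorem to \prettyref{thm:cex} combined with general principles of motivic homotopy theory.

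For part (1), I would begin with the tautological identification $a_{\rm Nis}\pi_0({\rm B}_{\et}G)=\mathcal{H}^1_{\et}(G)$, so that the canonical map in question becomes the natural comparison
\[
a_{\rm Nis}\pi_0({\rm B}_{\et}G)\longrightarrow a_{\rm Nis}\pi_0^{\mathbb{A}^1}({\rm B}_{\et}G).
\]
The key principle I would apply is: if $\mathcal{X}$ is a simplicial presheaf on ${\rm Sm}_k$ whose Nisnevich sheaf of connected components is already $\mathbb{A}^1$-invariant, then the canonical map above is an isomorphism. To justify this, $a_{\rm Nis}\pi_0^{\mathbb{A}^1}(\mathcal{X})$ is in general the quotient of $a_{\rm Nis}\pi_0(\mathcal{X})$ by the Nisnevich-localized equivalence relation generated by identifying, for each section $Z$ over $U\times\mathbb{A}^1$, its restrictions $Z|_0$ and $Z|_1$; but $\mathbb{A}^1$-invariance of $a_{\rm Nis}\pi_0(\mathcal{X})$ forces both endpoint restrictions $a_{\rm Nis}\pi_0(\mathcal{X})(U\times\mathbb{A}^1)\rightrightarrows a_{\rm Nis}\pi_0(\mathcal{X})(U)$ to coincide with the inverse of pullback along the projection, so the relation is trivial and the quotient map is an isomorphism. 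Applied to $\mathcal{X}={\rm B}_{\et}G$ with the $\mathbb{A}^1$-invariance of $\mathcal{H}^1_{\et}(G)$ furnished by \prettyref{thm:cex}, this gives (1).

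For part (2), the $\mathbb{A}^1$-invariance of $a_{\rm Nis}\pi_0^{\mathbb{A}^1}({\rm B}_{\et}G)$ is immediate from (1) combined with \prettyref{thm:cex}. For unramifiedness I would verify Morel's axioms (UN1)--(UN3) of \cite{MField}*{Definition 2.1} for $\mathcal{H}^1_{\et}(G)$. Axiom (UN1) is formal. Axiom (UN2) --- injectivity of the restriction $\mathcal{H}^1_{\et}(G)(X)\to\mathcal{H}^1_{\et}(G)(k(X))$ for smooth integral $X$ --- reduces stalk-wise to the Grothendieck--Serre conjecture for the relevant Henselian regular local rings, which is available for the groups appearing in \prettyref{thm:cex}. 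Axiom (UN3) --- that a section over an open $U\subset X$ containing all codimension-one points of a smooth integral $X$ is determined by its restrictions to the DVRs at those codimension-one points, inside the generic fiber --- is precisely the content of Nisnevich-local purity for $G$-torsors. For this I would invoke the purity statements available in the literature for each class of groups: Ojanguren--Panin for ${\rm O}(n)$ and quadratic forms; Gabber for ${\rm PGL}_n$ via absolute purity for the Brauer group; and Colliot-Th\'el\`ene--Sansuc together with Fedorov--Panin for connected reductive $G$ in characteristic zero.

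The principal obstacle is axiom (UN3), i.e., establishing Nisnevich-local purity across the three classes of groups; the rest of the argument is a formal combination of \prettyref{thm:cex}, motivic homotopy theory, and the Grothendieck--Serre conjecture.
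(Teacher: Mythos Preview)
Your approach to part~(1) rests on the ``general principle'' that if $a_{\rm Nis}\pi_0(\mathcal{X})$ is $\mathbb{A}^1$-invariant then $a_{\rm Nis}\pi_0(\mathcal{X})\to a_{\rm Nis}\pi_0^{\mathbb{A}^1}(\mathcal{X})$ is an isomorphism. The conclusion is correct, but your justification is not: you describe $a_{\rm Nis}\pi_0^{\mathbb{A}^1}(\mathcal{X})$ as the quotient of $a_{\rm Nis}\pi_0(\mathcal{X})$ by the single relation generated by $\mathbb{A}^1$-homotopies in $\mathcal{X}$. That description is false in general. The motivic localization $L_{\rm mot}$ is a transfinite iteration of $L_{\rm Nis}$ and ${\rm Sing}$, and after each Nisnevich sheafification new sections over $U\times\mathbb{A}^1$ can appear that were not present in $\mathcal{X}$ itself; these produce further identifications at the next application of ${\rm Sing}$. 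Your argument handles only the first layer. A clean fix: since $a_{\rm Nis}\pi_0(\mathcal{X})$ is an $\mathbb{A}^1$-invariant Nisnevich sheaf it is already a motivic space, so the map $\mathcal{X}\to a_{\rm Nis}\pi_0(\mathcal{X})$ factors through $L_{\rm mot}\mathcal{X}$, yielding a retraction of the comparison map; combined with surjectivity from the unstable connectivity theorem this gives the isomorphism. The paper proceeds quite differently: it establishes injectivity on fields by an explicit induction over the iterates $({\rm Sing}\circ L_{\rm Nis})^n{\rm B}_{\et}G$ (Lemmas~\ref{lem:curves} and~\ref{lem:bg}, Proposition~\ref{prop:inj-f}), using Nisnevich lexcision for torsors and $\mathbb{A}^1$-invariance over fields at every stage, and then propagates injectivity to all smooth schemes via Grothendieck--Serre (Proposition~\ref{prop:inj-sm}). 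Your route, once repaired, is shorter and more general; the paper's route is more explicit and yields finer control over sections on curves along the way.

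For part~(2), your strategy---reduce unramifiedness to Nisnevich-local purity and then invoke known purity results---coincides with the paper's Lemma~\ref{lem:purity-unramified}. However, your citation for connected reductive $G$ in characteristic zero is wrong: \cite{colliot-thelene:sansuc} only establishes purity in dimension~$2$, and \cite{fedorov:panin} is the Grothendieck--Serre conjecture, not purity. Neither, nor their combination, yields local purity for arbitrary connected reductive groups. The paper is explicit about this gap (``at this point, we cannot prove that $\mathcal{H}^1_{\et}(G)$ is unramified in general'') and treats unramifiedness as conditional on local purity, which is known case-by-case for the classical groups, ${\rm O}(n)$, and ${\rm PGL}_n$ (Remark~\ref{rem:purity}). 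You should flag the same caveat rather than assert a reference that does not exist.
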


We make several remarks concerning Theorem~\ref{thm:morel-g}.

\begin{remark}\label{rem:motivic-torsor} A \emph{motivic $G$-torsor} is a map $X \rightarrow {\rm B}_{\et}G$, i.e., an element of $[X, {\rm B}_{\et}G]_{\mathbb{A}^1}$. Theorem~\ref{thm:morel-g} provides the first step towards the classification of motivic $G$-torsors via Postnikov tower methods and gives an algebro-geometric meaning to such a classification. Indeed, in contrast to the situation for vector bundles where the sheaf of $\mathbb{A}^1$-connected components is trivial (which is basically a consequence of ``Hilbert theorem 90"), we see that ${\rm B}_{\et}G$ is \emph{not} $\mathbb{A}^1$-connected. Hence, the first stage of the lifting problem of~\eqref{eq:stage} requires that we choose a map $X \rightarrow a_{\rm Nis}\pi_0^{\mathbb{A}^1}({\rm B}_{\et}G)$ which, after Theorem~\ref{thm:morel-g} is a Nisnevich-local $G$-torsor. In many cases, we can say more explicitly what this means. For example, by Proposition~\ref{prop:components-on}, when $ G = {\rm O}(n)$ this is the data of an unramified element of the Grothendieck-Witt group of $X$ with a rank $n$-representative. 
\end{remark}

\begin{remark}\label{rem:challenge} Theorem~\ref{thm:morel-g} also introduces the main challenge in attempting to run the Asok--Fasel program to classify motivic $G$-torsors. Since the classifying stack for ${\rm GL}_r$-torsors is $\mathbb{A}^1$-connected, we can point the higher homotopy sheaves $a_{\rm Nis}\pi_j^{\mathbb{A}^1}({\rm B}_{\rm Zar}{\rm GL}_r \simeq {\rm B}_{\et}{\rm GL}_r)$ at the trivial torsor/vector bundle. The lifting problems in~\eqref{eq:stage} are then controlled by Nisnevich cohomology with coefficients in these sheaves. By Theorem~\ref{thm:morel-g}, we have a multitude of choices for base points at which we form the higher homotopy sheaves of ${\rm B}_{\et}G$. This adds an additional complication that one has to deal with when trying to compute these homotopy sheaves. For example, we can point $a_{\rm Nis}\pi_j^{\mathbb{A}^1}({\rm B}_{\et}{\rm PGL}_r)$ at various unramified Brauer classes (see Example~\ref{ex:pgl}) --- at the trivial Brauer class the homotopy sheaves agree with the homotopy sheaves of ${\rm B}_{\rm Nis}{\rm SL}_r$ in a range but this will not be the case at other base points. 
\end{remark}

\begin{remark}\label{rem:morel} We give a brief survey of the state-of-the-art for Morel's conjecture.
\begin{enumerate}
\item Morel made his conjecture in \cite{MField}*{Conjecture 1.12}. This should be contrasted with the main structural results from his book, \cite{MField}*{Theorem 1.9}, which states that the higher homotopy sheaves are strongly/strictly $\mathbb{A}^1$-invariant.
\item There are cases where Morel's conjecture holds rather easily --- $\mathbb{A}^1$-rigid schemes as well as smooth curves over a field.
\item When $G$ is a finite \'etale group scheme, the conjecture holds from \cite{morel:voevodsky}*{Page 137, Corollary 3.2}. Morel has also claimed \cite{MField}*{Remark 1.14} that he has proved that~\eqref{eq:compare-map} is a bijection on perfect fields when the group of irreducible components of $G$ is order prime to $k$ and indicated the possibility of a proof of his conjecture in this case, though no details were given.
\item Work of Choudhury \cite{choudhury} has established Morel's conjecture for grouplike presheaves of $H$-spaces and for principal homogeneous spaces under them. Our arguments in Section~\ref{sec:hoinv} are inspired by some of his arguments.
\item In a different direction, work of Balwe--Hogadi--Sawant \cite{balwe:hogadi:sawant} and Balwe--Sawant \cite{balwe:sawant} has verified Morel's conjecture for smooth, projective surfaces in characteristic zero. 
\end{enumerate}
\end{remark}

\begin{remark}
  The key points in the proofs of Theorems~\ref{thm:cex} and \ref{thm:morel-g} are the Grothendieck--Serre conjecture for $G$-torsors as established in \cites{fedorov:panin,panin} and the $\mathbb{A}^1$-invariance for torsors over fields (established in the characteristic $0$ case by \cite{raghunathan:ramanathan}). The Grothendieck--Serre conjecture allows to reduce questions over irreducible smooth schemes to function fields, where $\mathbb{A}^1$-invariance can be used; this technique was used by Choudhury \cite{choudhury} in his proof of Morel's conjecture for $H$-groups.  
\end{remark}

\subsection{Unramified torsors after Colliot-Th{\'e}l{\`e}ne and Sansuc}
We now explain a consequence of Theorem~\ref{thm:morel-g} to the problem of understanding $G$-torsors on smooth schemes over a field. Let $G$ be a connected reductive group over a field $k$ and let $X$ be a irreducible smooth $k$-scheme with function field $K$. We say that \emph{purity} is satisfied for $G$-torsors over $X$ if the map
  \[
  \op{im}\left({\rm H}^1_{\et}(X,G)\to {\rm H}^1_{\et}(\Spec K,G)\right)\to \bigcap_{x\in X^{(1)}}\op{im}\left({\rm H}^1_{\et}(\Spec \mathcal{O}_{X,x},G)\to {\rm H}^1_{\et}(\Spec K,G)\right)
  \]
  is surjective. An old question of Colliot-Th{\'e}l{\`e}ne and Sansuc \cite{colliot-thelene:sansuc}*{Question 6.4} asks when purity holds. The state-of-the-art is reviewed in Remark~\ref{rem:purity}.

  On the one hand, purity is known for local $X$ in a large number of cases. We observe in Section~\ref{sec:purity} that purity for henselian local $X$ is equivalent to the unramifiedness of the sheaf $\mathcal{H}^1_{\et}(G)$. This establishes the unramifiedness of $a_{\rm Nis}\pi_0^{\mathbb{A}^1}{\rm B}_{\et}G$ in the situation of Theorem~\ref{thm:morel-g}. On the other hand, the answer for general $X$ is expected to be negative. More precisely, we have the following conjecture of Antieau and Williams \cite{antieau:williams}:
  
  \begin{conjecture}[\cite{antieau:williams}*{Conjecture 1.2}]\label{conj:aw} Let $G$ be a non-special, semisimple $k$-group scheme. Then there exists a smooth, affine $k$-scheme for which purity fails.
  \end{conjecture}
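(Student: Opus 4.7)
The plan is to first translate the purity statement, via Theorem~\ref{thm:morel-g}, into a comparison question. Since $\mathcal{H}^1_{\et}(G)$ is unramified by Theorem~\ref{thm:morel-g}(2) and the Grothendieck--Serre conjecture identifies ${\rm H}^1_{\et}(\mathcal{O}_{X,x},G)$ with its image in ${\rm H}^1_{\et}(K,G)$ for $K$ the function field of $X$, the sections $\mathcal{H}^1_{\et}(G)(X)$ coincide with the intersection $\bigcap_{x\in X^{(1)}} \op{im}({\rm H}^1_{\et}(\mathcal{O}_{X,x},G)\to {\rm H}^1_{\et}(K,G))$ appearing in the purity formulation. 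Thus purity fails on $X$ precisely when the canonical map ${\rm H}^1_{\et}(X,G)\to \mathcal{H}^1_{\et}(G)(X)$ fails to be surjective, i.e. when some $\alpha\in {\rm H}^1_{\et}(K,G)$ is unramified at every codimension-one point of $X$ but does not lift to a global $G$-torsor.

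Second, to produce such $\alpha$ for a non-special semisimple $G$, the strategy is to exploit the cohomological invariants that detect non-special torsors. The prototype is $G={\rm PGL}_n$: by work of de Jong and Antieau--Williams, on suitably constructed smooth varieties $Y$ one finds Brauer classes $\alpha\in {\rm Br}(k(Y))[n]$ of period $n$ but index strictly greater than $n$ that are unramified on $Y$. Such $\alpha$ cannot be represented by an Azumaya algebra of rank $n^2$ on $Y$, hence by no ${\rm PGL}_n$-torsor, while unramifiedness puts it in $\mathcal{H}^1_{\et}({\rm PGL}_n)(Y)$. Replacing $Y$ by a Jouanolou affine bundle $X\to Y$ yields a smooth affine counterexample: by $\mathbb{A}^1$-invariance of $\mathcal{H}^1_{\et}(G)$ (Theorem~\ref{thm:morel-g}(2)) the sections are preserved, while the period-index obstruction persists on $X$ since $k(X)=k(Y)$. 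For other non-special simple types --- orthogonal, spin, exceptional --- the analogous strategy would replace the Brauer invariant with the appropriate Rost--Serre cohomological invariants, realized as unramified classes over carefully chosen function fields of high-dimensional smooth projective varieties.

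The main obstacle is a uniform treatment of all non-special semisimple $G$. Structurally, one reduces via the isogeny theorem and the decomposition of semisimple groups to simple almost-simple $G$, and then proceeds type-by-type through the Dynkin classification. For each type one needs both (a) a cohomological invariant that detects non-surjectivity of the comparison map and (b) a geometric construction realizing unramified classes with nontrivial invariant on a smooth affine variety. While (a) is in hand for $\mathrm{PGL}_n$, $\mathrm{O}(n)$, $\mathrm{Spin}_n$, $G_2$, and $F_4$ through well-developed theories of cohomological invariants, for exceptional types $E_6,E_7,E_8$ the invariants are only partially understood beyond low degrees, and the geometric realization of the obstructions may require genuinely new constructions in the style of versal torsors or approximations of topological classifying spaces. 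This case analysis --- especially for the most exotic exceptional types --- is the principal difficulty, and is likely the reason the statement remains conjectural.
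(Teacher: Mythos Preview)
The statement you are addressing is a \emph{conjecture} in the paper, not a theorem; the paper does not prove it and does not claim to. It is quoted from Antieau--Williams and followed only by the remark that the case $G={\rm PGL}_p$ over $\mathbb{C}$ is known from that reference. So there is no ``paper's own proof'' to compare against, and your final paragraph correctly identifies the obstacle that keeps the statement open.

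That said, your outline is a reasonable program, and it is worth noting where it aligns with and diverges from what the paper actually does. Your first paragraph is essentially the content of Proposition~\ref{prop:purity-sheafification}: purity on $X$ is equivalent to surjectivity of ${\rm H}^1_{\et}(X,G)\to \mathcal{H}^1_{\et}(G)(X)$, given unramifiedness of $\mathcal{H}^1_{\et}(G)$. One caveat: you invoke Theorem~\ref{thm:morel-g}(2) for unramifiedness, but that theorem only covers the groups listed in Theorem~\ref{thm:cex}, and unramifiedness in the paper is contingent on \emph{local} purity (Lemma~\ref{lem:purity-unramified}), which is not known for all semisimple $G$. So even the translation step is not available in the full generality of the conjecture.

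For the ${\rm PGL}_n$ case, the paper's route (Theorem~\ref{thm:aw-example}, following Antieau--Williams) is somewhat different from the period--index heuristic you sketch: rather than directly arguing that an unramified Brauer class has index too large to be represented by a degree-$n$ Azumaya algebra, the argument passes through Betti realization and shows that the required factorization $X\to {\rm B}_{\et}{\rm PGL}_p\to {\rm B}^2_{\et}\mathbb{G}_{\rm m}$ cannot exist already at the level of topological spaces. Your approach and theirs are morally related (both are period--index phenomena), but the topological obstruction is what actually produces the known counterexamples, and it is unclear how to replace it for groups whose cohomological invariants are less well understood---exactly the difficulty you flag for $E_6,E_7,E_8$.
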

  
  \begin{remark} The work of \cite{antieau:williams} proves Conjecture~\ref{conj:aw} in the setting of $k = \mathbb{C}$ and $G = {\rm PGL}_p$ by constructing examples of dimension $2p+2$. Since purity does hold in dimension $\leq 2$, we do not whether or not purity holds in dimension $d = 3, 4, 5$.
  \end{remark}

Using our results on the sheaves of $\mathbb{A}^1$-connected components of classifying spaces, we can bring Conjecture~\ref{conj:aw} to the realm of $\mathbb{A}^1$-homotopy theory in a large range of cases:

\begin{corollary}
  \label{cor:purity-intro}
  Let $G$ be a group over a field $k$ satisfying the hypotheses of Theorem~\ref{thm:cex} and further assume that $G$ satisfies local purity. Then if purity holds for $G$-torsors over $X$ then the sheafification map $\pi_0^{\mathbb{A}^1}{\rm B}_{\et}G(X)\to a_{\rm Nis}\pi_0^{\mathbb{A}^1}{\rm B}_{\et}G(X)$ is surjective. In particular, if $X$ is a smooth $k$-scheme for which the sheafification map $\pi_0^{\mathbb{A}^1}{\rm B}_{\et}G(X)\to a_{\rm Nis}\pi_0^{\mathbb{A}^1}{\rm B}_{\et}G(X)$ is \emph{not} surjective, then we have a counterexample to purity in this setting.
\end{corollary}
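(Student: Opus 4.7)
The plan is to use Theorem~\ref{thm:morel-g} to translate the statement entirely into the language of \'etale $G$-torsors, at which point the purity hypothesis essentially delivers the conclusion by inspection.

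First I would identify $a_{\rm Nis}\pi_0^{\mathbb{A}^1}{\rm B}_{\et}G$ with $\mathcal{H}^1_{\et}(G)$ using Theorem~\ref{thm:morel-g}(1). Under this identification, the sheafification map $\pi_0^{\mathbb{A}^1}{\rm B}_{\et}G(X)\to a_{\rm Nis}\pi_0^{\mathbb{A}^1}{\rm B}_{\et}G(X)$ sits in a commutative triangle with the canonical maps
\[
{\rm H}^1_{\et}(X;G)\longrightarrow \pi_0^{\mathbb{A}^1}{\rm B}_{\et}G(X) \qquad\text{and}\qquad {\rm H}^1_{\et}(X;G)\longrightarrow \mathcal{H}^1_{\et}(G)(X),
\]
so surjectivity of the sheafification map will follow as soon as I show that ${\rm H}^1_{\et}(X;G)\to \mathcal{H}^1_{\et}(G)(X)$ is surjective under the purity hypothesis for $X$.

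Next, I would invoke Theorem~\ref{thm:morel-g}(2), whose hypotheses include local purity for $G$, to conclude that $\mathcal{H}^1_{\et}(G)$ is an unramified Nisnevich sheaf. By the definition of unramifiedness \cite{MField}*{Definition 2.1}, its sections fit into
\[
\mathcal{H}^1_{\et}(G)(X) \;=\; \bigcap_{x\in X^{(1)}} \mathcal{H}^1_{\et}(G)(\mathcal{O}_{X,x}) \;\subseteq\; \mathcal{H}^1_{\et}(G)(\Spec K),
\]
where $K$ is the function field of $X$. Combining this with the content of Section~\ref{sec:purity}, which identifies $\mathcal{H}^1_{\et}(G)(\Spec \mathcal{O}_{X,x})$ with the image of ${\rm H}^1_{\et}(\Spec\mathcal{O}_{X,x};G)\to {\rm H}^1_{\et}(\Spec K;G)$ and $\mathcal{H}^1_{\et}(G)(\Spec K)$ with ${\rm H}^1_{\et}(\Spec K;G)$ itself, presents $\mathcal{H}^1_{\et}(G)(X)$ as exactly the codimension-one intersection appearing in the purity formulation for $X$.

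At this point surjectivity of ${\rm H}^1_{\et}(X;G)\to \mathcal{H}^1_{\et}(G)(X)$ is literally the purity hypothesis, giving the main claim; the ``in particular'' clause is then its contrapositive. The step I expect to require the most care is the local identification of $\mathcal{H}^1_{\et}(G)(\Spec\mathcal{O}_{X,x})$ with an image subset of ${\rm H}^1_{\et}(\Spec K;G)$: it will rely on the Grothendieck--Serre conjecture for the required injectivity ${\rm H}^1_{\et}(\Spec\mathcal{O}_{X,x};G)\hookrightarrow {\rm H}^1_{\et}(\Spec K;G)$, together with the local purity assumption to relate sections of the Nisnevich sheaf over $\Spec\mathcal{O}_{X,x}$ to honest \'etale $G$-torsors on that local scheme.
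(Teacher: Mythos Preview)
Your proposal is correct and follows essentially the same route as the paper's own argument (given in Proposition~\ref{prop:purity-sheafification}): identify $a_{\rm Nis}\pi_0^{\mathbb{A}^1}{\rm B}_{\et}G$ with $\mathcal{H}^1_{\et}(G)$ via Theorem~\ref{thm:morel-g}/\ref{thm:h1et-vs-pi0a1}, use unramifiedness (from local purity, via Lemma~\ref{lem:purity-unramified}) to rewrite $\mathcal{H}^1_{\et}(G)(X)$ as the codimension-one intersection, and then read off surjectivity of ${\rm H}^1_{\et}(X,G)\to\mathcal{H}^1_{\et}(G)(X)$ from the purity hypothesis and chase the commutative diagram. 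The only cosmetic difference is that the paper draws a commutative square rather than a triangle, and the identification $\mathcal{H}^1_{\et}(G)(\mathcal{O}_{X,x})\cong\op{im}\bigl({\rm H}^1_{\et}(\mathcal{O}_{X,x},G)\to{\rm H}^1_{\et}(K,G)\bigr)$ over a dvr comes from Proposition~\ref{prop:lnis-dvr} plus Grothendieck--Serre rather than from local purity itself.
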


It would be interesting to adapt some obstruction-theory methods to identify obstructions to surjectivity of $\pi_0^{\mathbb{A}^1}{\rm B}_{\et}G(X)\to a_{\rm Nis}\pi_0^{\mathbb{A}^1}{\rm B}_{\et}G(X)$ and thus to purity and better understand what properties of schemes $X$ or groups $G$ imply failure of purity.
  

\emph{Structure of the paper:} We first recall a couple of preliminaries in Section~\ref{sec:prelims}. Homotopy invariance for Nisnevich sheafifications is discussed in Section~\ref{sec:hoinv} and unramifiedness for these sheafifications as well as the relation to purity in Section~\ref{sec:purity}. Then we identify the Nisnevich sheaf of \'etale torsors with the $\mathbb{A}^1$-connected components of the classifying spaces in Section~\ref{sec:components} and discuss a couple of examples and consequences in Section~\ref{sec:examples}.

\emph{Acknowledgements:}  We gratefully acknowledge that Girish Kulkarni's stay at Bergische Universit\"at Wuppertal  is supported by a visitor grant from the DFG SPP 1786 ``Homotopy theory and algebraic geometry''. The first author would like to acknowledge the (heavy) influence of Ben Antieau towards the ideas on this project and interesting discussions on \cite{antieau:williams}.

\section{Preliminaries and notation}
\label{sec:prelims}

In this paper, ${\rm Sm}_S$ denote the category of smooth schemes over a base $S$. It is a full subcategory of the category ${\rm EssSm}_S$ of essentially smooth schemes. We will use (very moderately) the language of $\infty$-categories which is, by now, standard in the subject. The $\infty$-category of motivic spaces, as introduced by Morel--Voevodsky \cite{morel:voevodsky} is denoted by ${\rm Spc(S)}$ and we refer to \cites{antieau:elmanto,bachmann:hoyois} for the basics of this construction closer to the language of this paper; in particular a motivic space is an $\mathbb{A}^1$-invariant Nisnevich sheaf of spaces, often denoted by $\mathcal{X}:{\rm Sm}_S^{{\rm op}} \rightarrow \mathcal{S}$.  We will denote by $[\mathcal{X}, \mathcal{Y}]_{\mathbb{A}^1} = \pi_0{\rm Maps}_{\rm Spc(S)}(\mathcal{X}, \mathcal{Y})$ the set of homotopy classes of maps between two motivic spaces. 
The $\infty$-category ${\rm Spc(S)}$ is obtained from the $\infty$-category of presheaves of spaces on $S$ (written as ${\rm PreShv(S)}$) via a combination of two localization endofunctors
\[
L_{\rm Nis}, L_{\rm \mathbb{A}^1}: {\rm PreShv(S)} \rightarrow {\rm PreShv(S)}.
\]
Here, $L_{\rm Nis}$ is the usual Nisnevich localization functor, while $L_{\rm \mathbb{A}^1}$ can be modeled by the ${\rm Sing}$ construction of Suslin, as explained by Morel and Voevodsky in \cite{morel:voevodsky}*{Page 87}; see also \cite{antieau:elmanto}*{Definition 4.2}. In particular if $\mathcal{X}$ is a presheaf of spaces, then
\[
\pi_0(L_{\rm \mathbb{A}^1}\mathcal{X}(T)) = \pi_0({\rm Sing}\mathcal{X}(T)) = {\rm coeq}\left(\pi_0(\mathcal{X}(T \times \mathbb{A}^1)) \rightrightarrows\pi_0(\mathcal{X}(T)\right),
\]
which is a formula we will need in this paper.  One consequence is the so-called unstable $\mathbb{A}^1$-connectivity theorem of Morel-Voevodsky's (\cite{morel:voevodsky}*{Corollary 3.22}, \cite{antieau:elmanto}*{Corollary 4.30}); we write $\pi_0^{\mathbb{A}^1}(\mathcal{X})$ (resp. $\pi_0(\mathcal{X})$) for the presheaf $U\mapsto [U,\mathcal{X}]_{\mathbb{A}^1}$ (resp. $U \mapsto [U, \mathcal{X}]$) and we write $a_{\rm Nis}\pi_0^{\mathbb{A}^1}(\mathcal{X})$ (resp. $a_{\rm Nis}\pi_0(\mathcal{X})$) for the sheafification (as presheaves of sets). Then, for any presheaf of spaces $\mathcal{X}$, the map
\[
a_{\rm Nis}\pi_0(\mathcal{X}) \rightarrow a_{\rm Nis}\pi_0^{\mathbb{A}^1}(\mathcal{X})
\] 
is an epimorphism of Nisnevich sheaf of sets. 

Many arguments in this paper require passing to generic points or to stalks.
\begin{notation}
  Let $\mathcal{C}$ be an $\infty$-category with colimits. We adopt the following conventions: for a presheaf $\mathcal{F}\colon {\rm Sm}_S^{\rm op}\to\mathcal{C}$ we get a presheaf ${\rm EssSm}_S^{\rm op}\to\mathcal{C}$ by left Kan extension which we abusively also call $\mathcal{F}$. In particular any motivic space canonically extends to a functor out of ${\rm EssSm}_S$.
\end{notation}

\begin{remark} We have a fully faithful immersion ${\rm EssSm}_S \hookrightarrow {\rm Pro}({\rm Sm}_S)$, where the ${\rm EssSm}_S$ is the subcategory of pro-objects with affine transition maps. Since any presheaf on ${\rm Sm}_S$ extends uniquely to one on ${\rm Pro}({\rm Sm}_S)$ (via Left Kan extension), it uniquely determines a presheaf on ${\rm EssSm}_S$ by restriction.
\end{remark}

\subsection{Torsors}
We recall that for an algebraic group $G$, a $G$-torsor $Y$ over a scheme $X$ is a $X$-scheme $Y$ equipped with a $G$-action such that $$G\times_X Y \to Y\times_X Y $$  is an isomorphism. A torsor is called rationally trivial if there is an open subset $U$ of $X$ such that $Y\times_X U$ is trivial. It follows from the Seshadri's result \cite{seshadri} that all rationally trivial torsors over smooth schemes are locally trivial in Nisnevich topology. In general torsors are only locally trivial in the \'etale topology.

\subsection{Nisnevich sheaves of sets}
While the presheaves of interest in this paper come as presheaves of spaces, we will be mostly interested in their homotopy sheaves, in particular their $\pi_0$. We will now recall some formalism about Nisnevich sheafification of sets. Denoting Nisnevich sheafification by $a_{\rm Nis}$, one of the ways to get this sheafification is by using Godement resolution, see \cite{godement} for details. For a presheaf $\mathcal{F}\colon {\rm Sm}_S^{\rm op}\to {\rm Set}_\ast$, the Godement resolution $\mathcal{F}(-) \to \mathcal{G}F(-)$ is defined as follows:
Let $$ G^0\mathcal{F}(U)= \prod_{u\in U}\mathcal{F}(\mathcal{O}^{\rm h}_{U,u})$$ 
and $G^n\mathcal{F}=G\circ \cdots \circ G \mathcal{F}$, the $n+1$ fold composition of $G$. Now define $\mathcal{G}F(U)=\op{Tot}\mathcal{G}^{\bullet}F(U)$, then $\mathcal{F}\to \mathcal{G}F$
is a flasque resolution of $\mathcal{F}$ on $({\rm Sm}_S^{\rm op})_{\rm Nis}$. So a section of the sheafification $a_{\rm Nis}\mathcal{F}$ is given by a collection of sections of $\mathcal{F}$ over the henselizations $\mathcal{O}_{X,x}^{\rm h}$ of the local rings of points $x\in X$, subject to compatibility conditions. From this formula, we see that a section of the sheafification $a_{\rm Nis}\mathcal{F}$ is given by a collection of sections of $\mathcal{F}$ over the henselizations $\mathcal{O}_{X,x}^{\rm h}$ of the local rings of points $x\in X$, with compatibilities 

Let $\mathcal{F}(-)={\rm H}^1_{\et}(-,G)$ denote the presheaf associating to a smooth scheme the set of isomorphism classes of $G$-torsor. The  Nisnevich sheafification of this presheaf is denoted by $\mathcal{H}^1_{\et}(G)$.

The following is a list of properties of presheaves which we will use throughout the paper. 

\begin{definition}
  Let $F$ be a field and $\mathcal{F}\colon{\rm EssSm}_F^{\rm op}\to {\rm Set}_*$ be a presheaf of pointed sets 
  \begin{enumerate}
  \item The presheaf $\mathcal{F}$ is called \emph{finitary} if it converts cofiltered limits to filtered colimits, i.e., it is left Kan extended from ${\rm Sm}_F$. 
  \item The presheaf $\mathcal{F}$ is called \emph{homotopy invariant} if for any $X\in{\rm EssSm}_F$ we have an induced isomorphism $\mathcal{F}(X)\xrightarrow{\cong}\mathcal{F}(X\times\mathbb{A}^1)$.
  \item The presheaf $\mathcal{F}$ has the (strong) \emph{Grothendieck--Serre property} if for any regular local $F$-algebra $R$ with fraction field $K$, the map $\mathcal{F}(\Spec R)\to \mathcal{F}(\Spec K)$ has trivial kernel (is injective). 
  \item The presheaf $\mathcal{F}$ is \emph{Nisnevich lexcisive} if for an Nisnevich distinguished square
    \[
  \xymatrix{
   W \ar[d] \ar[r] & X \ar[d] \\
    U \ar[r] & Y,
  }
  \]
  the map
  \[
  \mathcal{F}(Y)  \rightarrow \mathcal{F}(X) \times_{\mathcal{F}({W})} \mathcal{F}(U)
  \]
is a surjective map of sets.
\item The presheaf $\mathcal{F}$ is \emph{Nisnevich excisive} if it takes a Nisnevich distinguished square as above to a pullback square of pointed sets. 
  \end{enumerate}
\end{definition}

\section{Homotopy invariance for Nisnevich sheafifications}
\label{sec:hoinv}

In the following section we discuss the homotopy invariance of Nisnevich sheafifications, with a particular view toward $\mathcal{H}^1_{\et}(G)$. Essentially, any finitary presheaf which satisfies a strong version of the Grothendieck--Serre conjecture (i.e., that restricting sections from local rings to their function fields is injective) and $\mathbb{A}^1$-invariance over fields has a homotopy invariant Nisnevich sheafification. A similar method has already been used by Choudhury \cite{choudhury} for proving that $a_{\rm Nis}\pi_0^{\mathbb{A}^1}(\mathcal{X})$ is homotopy invariant for H-groups and principal homogeneous spaces under H-groups.

We first note that the strong Grothendieck--Serre property (which is about local rings) actually implies that sections over smooth schemes are detected on the function fields:

\begin{proposition}
  \label{prop:lnis-unramified-1}
  Let $\mathcal{F}$ be a finitary presheaf on ${\rm EssSm}_F$ which satisfies the strong Grothendieck--Serre property. Then for every irreducible essentially smooth $F$-scheme $X$ with function field $K$ the restriction map $a_{\rm Nis} \mathcal{F}(X)\to a_{\rm Nis}\mathcal{F}(\Spec K)\cong \mathcal{F}(\Spec K)$ to the generic point is injective. 
\end{proposition}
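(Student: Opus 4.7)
The plan is to exploit the Godement description of $a_{\rm Nis}\mathcal{F}(X)$ recalled in Section~\ref{sec:prelims}: a section is a compatible family $(s_x)_{x\in X}$ with $s_x\in\mathcal{F}(\mathcal{O}_{X,x}^{\rm h})$, and two such families agree in $a_{\rm Nis}\mathcal{F}(X)$ if and only if they agree pointwise. Hence it suffices to show that whenever $s=(s_x)$ and $t=(t_x)$ have the same image $s_\eta=t_\eta$ at the generic point $\eta$ of $X$, one has $s_x=t_x$ for every $x\in X$.

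For the first step I would reduce everything to the generic fiber of each henselization. Since $X$ is irreducible and essentially smooth, $\mathcal{O}_{X,x}^{\rm h}$ is a regular local ring, in particular a domain; let $K_x^{\rm h}$ denote its fraction field. The unique generic point of $\Spec\mathcal{O}_{X,x}^{\rm h}$ maps to $\eta$, giving an embedding $K\hookrightarrow K_x^{\rm h}$. The Godement compatibility relating the stalks at $x$ and at $\eta$ forces $s_x|_{K_x^{\rm h}}$ to coincide with the image of $s_\eta$ under $K\hookrightarrow K_x^{\rm h}$; together with $s_\eta=t_\eta$ this gives $s_x|_{K_x^{\rm h}}=t_x|_{K_x^{\rm h}}$ in $\mathcal{F}(K_x^{\rm h})$.

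The remaining step is to upgrade the strong Grothendieck--Serre property to henselizations and promote this last equality back to $s_x=t_x$ in $\mathcal{F}(\mathcal{O}_{X,x}^{\rm h})$. I would write $\mathcal{O}_{X,x}^{\rm h}=\colim_i \mathcal{O}_{U_i,u_i}$ as a filtered colimit over étale neighborhoods $(U_i,u_i)\to(X,x)$, and similarly $K_x^{\rm h}=\colim_i K_i$ where $K_i$ is the fraction field of $\mathcal{O}_{U_i,u_i}$. The finitary hypothesis then identifies the map $\mathcal{F}(\mathcal{O}_{X,x}^{\rm h})\to\mathcal{F}(K_x^{\rm h})$ with the filtered colimit of the maps $\mathcal{F}(\mathcal{O}_{U_i,u_i})\to\mathcal{F}(K_i)$. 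Each of these is injective by strong Grothendieck--Serre applied to the regular local $F$-algebra $\mathcal{O}_{U_i,u_i}$, and filtered colimits of injections of sets are themselves injections, so the limit map is injective as well. Combined with the previous paragraph, this gives $s=t$.

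The main obstacle I expect is purely bookkeeping: making the Godement compatibility between the stalks at the specialization $\eta\leadsto x$ precise enough to extract the concrete equality $s_x|_{K_x^{\rm h}}=s_\eta|_{K_x^{\rm h}}$ used above (one has to unwind the totalization of the cosimplicial object $G^\bullet\mathcal{F}$ at these two points). Everything after that is a formal consequence of finitariness together with strong Grothendieck--Serre.
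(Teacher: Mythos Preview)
Your proposal is correct and follows essentially the same route as the paper: use the Godement description to embed $a_{\rm Nis}\mathcal{F}(X)$ into $\prod_{x}\mathcal{F}(\mathcal{O}_{X,x}^{\rm h})$, then reduce to injectivity of $\mathcal{F}(\mathcal{O}_{X,x}^{\rm h})\to\mathcal{F}(K_x^{\rm h})$ at each point. The only difference is that the paper applies the strong Grothendieck--Serre property \emph{directly} to the henselization (which is itself a regular local $F$-algebra), whereas you take the extra step of writing $\mathcal{O}_{X,x}^{\rm h}$ as a filtered colimit of Zariski local rings and passing to the limit; this detour is unnecessary but harmless.
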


\begin{proof}
  From the Godement description of the sheafification, cf. Section~\ref{sec:prelims}, we have the following commutative square in which the left vertical map is injective:
  \[
  \xymatrix{
    a_{\rm Nis}\mathcal{F}(X) \ar[r] \ar@{^(->}[d] & \mathcal{F}(\Spec K) \ar[d] \\
    \prod_{x\in X}\mathcal{F}(\Spec\mathcal{O}_{X,x}^{\rm h}) \ar[r] & \prod_{x\in X}\mathcal{F}(\Spec\op{Frac}(\mathcal{O}_{X,x}^{\rm h}))
  }
  \]
  It therefore suffices to check injectivity of the bottom map. But that follows since $\mathcal{F}$ has the strong Grothendieck--Serre property.
\end{proof}

\begin{remark}
  Actually, the strong Grothendieck--Serre property implies the injectivity already for the Zariski sheafification. For the Nisnevich result, a strong Grothendieck--Serre property for henselian local rings would suffice.
\end{remark}

In the following, we provide versions of \cite{choudhury}*{Lemma 3.3, Theorem 3.1}. These say that sections of Nisnevich sheafifications of finitary presheaves satisfying Nisnevich descent are induced from presheaf sections defined at all codimension 1 points. These results will be needed for reducing the general homotopy invariance for the Nisnevich sheafification to the invariance for $\mathbb{A}^1$ over fields for the presheaf, as input in Proposition~\ref{prop:lnis-hoinv}. They also play some role in the discussion of unramifiedness and purity later.


\begin{proposition}
  \label{prop:lnis-dvr}
  Let $\mathcal{F}$ be a finitary presheaf on ${\rm EssSm}_F$ satisfying Nisnevich lexcision and let $R$ be an essentially smooth discrete valuation ring. Then the sheafification map is surjective
  \[
  \mathcal{F}(R)\twoheadrightarrow a_{\rm Nis}\mathcal{F}(R).
  \]
\end{proposition}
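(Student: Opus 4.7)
The plan is to exploit the simple structure of Nisnevich covers of a DVR, combined with finitariness and Nisnevich lexcision. First I would analyze the shape of Nisnevich covers of $\Spec R$: since $R$ has only two points, any such cover refines to one of the form $\{\Spec R'\to\Spec R,\ \Spec K\hookrightarrow\Spec R\}$, where $K=\op{Frac}(R)$ and $R\to R'$ is an essentially étale extension of DVRs inducing an isomorphism on residue fields. This fits into a Nisnevich distinguished square
\[
\begin{tikzcd}
\Spec\op{Frac}(R') \ar[r]\ar[d] & \Spec R' \ar[d] \\
\Spec K \ar[r] & \Spec R.
\end{tikzcd}
\]

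Next I would analyze an arbitrary section $s\in a_{\rm Nis}\mathcal{F}(R)$ via its restrictions to the two stalks. The stalk at the generic point is simply $\mathcal{F}(K)$ (since a field has no nontrivial Nisnevich covers), and the stalk at the closed point is $\mathcal{F}(R^{\rm h})$ where $R^{\rm h}$ is the henselization of $R$ (again since henselian local rings see only trivial Nisnevich covers). The sheaf property, together with the cover analysis of Step~1, produces an injection
\[
a_{\rm Nis}\mathcal{F}(R) \hookrightarrow \mathcal{F}(R^{\rm h}) \times_{\mathcal{F}(K^{\rm h})} \mathcal{F}(K),
\]
where $K^{\rm h}=\op{Frac}(R^{\rm h})$, and $s$ corresponds to a compatible pair $(s_{\rm h},s_K)$. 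Using finitariness of $\mathcal{F}$, I would write $R^{\rm h}=\colim R'$ over essentially étale neighborhoods with trivial residue extension, hence $\mathcal{F}(R^{\rm h})=\colim\mathcal{F}(R')$ and $\mathcal{F}(K^{\rm h})=\colim\mathcal{F}(\op{Frac}(R'))$. Therefore $s_{\rm h}$ is represented by some $s_{R'}\in\mathcal{F}(R')$, and after enlarging $R'$ in the filtered system, the compatibility becomes the honest equality $s_{R'}|_{\op{Frac}(R')}=s_K|_{\op{Frac}(R')}$ in $\mathcal{F}(\op{Frac}(R'))$.

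The last step is to apply Nisnevich lexcision to the distinguished square from Step~1 together with the compatible pair $(s_{R'},s_K)$. Lexcision produces a section $\tilde s\in\mathcal{F}(R)$ whose restrictions to $\Spec R'$ and $\Spec K$ are $s_{R'}$ and $s_K$ respectively. The image of $\tilde s$ in $a_{\rm Nis}\mathcal{F}(R)$ then has the same stalks as $s$ at both points of $\Spec R$, and hence coincides with $s$ by the sheaf axiom. This yields the desired surjectivity.

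The main obstacle I expect is the precise form of the pullback description in the middle step. The intuition that a Nisnevich-sheaf section on a DVR is determined by compatible data over $R^{\rm h}$ and $K$ is standard, but the compatibility over $K^{\rm h}$ (rather than over $K$ itself) needs to be justified carefully from the plus-construction, using the refinement statement of Step~1 and commuting the finite limit with the filtered colimit of étale neighborhoods. Once that identification is in place, the reduction to a concrete distinguished square and the invocation of lexcision are essentially immediate.
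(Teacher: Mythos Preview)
Your proposal is correct and follows essentially the same route as the paper: restrict a section of $a_{\rm Nis}\mathcal{F}$ to the henselian stalk, use finitariness to descend it to a Nisnevich neighbourhood $W\to\Spec R$, and then apply Nisnevich lexcision to the distinguished square with $\Spec K$ to produce a preimage in $\mathcal{F}(R)$. You are in fact more careful than the paper about the compatibility over $\op{Frac}(W)$ (which ultimately lives in $K^{\rm h}$ and has to be brought down to a finite level via finitariness), a point the paper handles in one terse sentence.
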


\begin{proof}
  Denote by $K=\op{Frac}(R)$ the fraction field of $R$ and by $R^{\rm h}$ the henselization of $R$ at the maximal ideal. Let $\sigma\in a_{\rm Nis}\mathcal{F}(R)$ be a section. For the restricted section $\sigma|_{R^{\rm h}}\in a_{\rm Nis}\mathcal{F}(R^{\rm h})\cong\mathcal{F}(R^{\rm h})$ there exists a Nisnevich neighbourhood $p\colon W\to \Spec R$ and a section $\sigma'\in \mathcal{F}(W)$ mapping to $\sigma_{R^{\rm h}}$. Since fields are Nisnevich local, the restrictions of $\sigma'$ and $\sigma$ to $\op{Frac}(W)$ coincide. By Nisnevich lexcision, there exists a section $\tau\in\mathcal{F}(R)$, whose restriction to $W$ agrees with $\sigma'$ and whose restriction to $\op{Frac}(R)$ agrees with the corresponding restriction of $\sigma$. This proves the surjectivity.
\end{proof}

\begin{proposition}
  \label{prop:lnis-dim1}
  Let $\mathcal{F}$ be a finitary presheaf on ${\rm EssSm}_F$ which satisfies Nisnevich lexcision. Then for any essentially smooth $F$-scheme $X$ of dimension $\leq 1$, the sheafification map $\mathcal{F}(X)\to a_{\rm Nis}\mathcal{F}(X)$ is surjective. 
\end{proposition}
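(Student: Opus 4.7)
The plan is to lift $\sigma\in a_{\rm Nis}\mathcal{F}(X)$ to $\mathcal{F}(X)$ by first producing a lift on a dense open $U_0\subseteq X$, then on Zariski neighborhoods of the (finitely many) closed points of $X\setminus U_0$, and gluing via Nisnevich lexcision. I reduce to the case $X$ irreducible (both $\mathcal{F}$ and $a_{\rm Nis}\mathcal{F}$ send disjoint unions to products) and $\dim X=1$ (the zero-dimensional case is immediate since fields are Nisnevich-local); let $\eta$ be the generic point of $X$ with function field $K$.

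Since $\mathcal{F}$ is finitary we have $\mathcal{F}(K)=\colim_U\mathcal{F}(U)$, so $\sigma|_K\in\mathcal{F}(K)$ lifts to some $\tau_0\in\mathcal{F}(U_0)$ with $U_0\subseteq X$ a nonempty open. The Nisnevich site being coherent, sheafification preserves the finitary property, so also $a_{\rm Nis}\mathcal{F}(\Spec K)=\colim_U a_{\rm Nis}\mathcal{F}(U)$, and after shrinking $U_0$ I may arrange that the image of $\tau_0$ in $a_{\rm Nis}\mathcal{F}(U_0)$ equals $\sigma|_{U_0}$. By Noetherianity, $X\setminus U_0=\{x_1,\ldots,x_n\}$ is a finite set of closed points.

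For each $x_i$, the local ring $\mathcal{O}_{X,x_i}$ is an essentially smooth DVR, so Proposition~\ref{prop:lnis-dvr} provides a lift $\tau_{x_i}\in\mathcal{F}(\Spec\mathcal{O}_{X,x_i})$ of $\sigma|_{\Spec\mathcal{O}_{X,x_i}}$, which by finitariness of $\mathcal{F}$ spreads to a section $\tau_{x_i}\in\mathcal{F}(V_{x_i})$ on an open $V_{x_i}\ni x_i$ lifting $\sigma|_{V_{x_i}}$ in $a_{\rm Nis}\mathcal{F}(V_{x_i})$. I then inductively attach each $V_{x_i}$ to the running $U_0$. On the overlap $V_{x}\cap U_0$ the restrictions $\tau_0$ and $\tau_{x}$ agree in $a_{\rm Nis}\mathcal{F}$ but not necessarily in $\mathcal{F}$; however their further restrictions to $\mathcal{F}(K)$ coincide (both equal $\sigma|_K$), so by finitariness of $\mathcal{F}$ they agree in $\mathcal{F}(W)$ on some nonempty open $W\subseteq V_x\cap U_0$. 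Here dimension one is essential: $W$ is automatically cofinite in $V_x\cap U_0$, so $(V_x\cap U_0)\setminus W$ is a finite set of closed points of $V_x$, and $V_x':=V_x\setminus\bigl((V_x\cap U_0)\setminus W\bigr)$ is an open neighborhood of $x$ (since $x\notin U_0$) with $V_x'\cap U_0=W$. Nisnevich lexcision applied to the Zariski distinguished square with corners $(V_x'\cap U_0,V_x',U_0,U_0\cup V_x')$ then produces a section in $\mathcal{F}(U_0\cup V_x')$ restricting to $\tau_0$ and $\tau_{x}|_{V_x'}$; since $a_{\rm Nis}\mathcal{F}$ satisfies Zariski descent, this section still lifts $\sigma|_{U_0\cup V_x'}$. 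Iterating over the finite list $\{x_1,\ldots,x_n\}$ completes the lift.

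The main obstacle is the gap between agreement in $\mathcal{F}$ and in $a_{\rm Nis}\mathcal{F}$: Nisnevich lexcision requires the former, whereas our local lifts naturally provide only the latter. Dimension one is used twice---first so that $X\setminus U_0$ is a finite set of closed points, allowing a finite induction, and second so that the overlap complements $(V_x\cap U_0)\setminus W$ are also finite closed subsets, which makes possible the crucial shrinking $V_x\rightsquigarrow V_x'$ that upgrades $a_{\rm Nis}\mathcal{F}$-agreement to genuine $\mathcal{F}$-agreement.
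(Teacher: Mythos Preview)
Your proof is correct and follows essentially the same approach as the paper: lift at the generic point and spread to a dense open, lift at each of the finitely many missing closed points via the DVR case (Proposition~\ref{prop:lnis-dvr}), shrink so the lifts agree on overlaps, and glue using Nisnevich lexcision applied to Zariski squares. You are in fact slightly more careful than the paper in two places: you explicitly arrange that the spread sections lift $\sigma$ in $a_{\rm Nis}\mathcal{F}$ (not just at the stalk), and you check that the glued section still lifts $\sigma$ via Zariski descent for $a_{\rm Nis}\mathcal{F}$.
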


\begin{proof}  
The proof of \cite{choudhury}*{Theorem 3.1} goes through almost verbatim. Consider an essentially smooth scheme $X\in{\rm EssSm}_F$ of dimension 1 with function field $K$. We can reduce to the case that $X$ is connected. Since $\mathcal{F}$ is finitary, we can reduce to the case that $X$ is noetherian.

Let $\sigma\in a_{\rm Nis}\mathcal{F}(X)$. By the Godement description of Nisnevich sheafification, cf. Section~\ref{sec:prelims}, this means we are given $\sigma_x\in a_{\rm Nis}\mathcal{F}(\Spec\mathcal{O}_{X,x})$ for any codimension 1 point $x\in X$ and a generic section $\sigma_\eta\in a_{\rm Nis}\mathcal{F}(\Spec K)=\mathcal{F}(\Spec K)$ and the restrictions of $\sigma_x$ to $\Spec K$ agree with $\sigma_\eta$. By Proposition~\ref{prop:lnis-dvr}, we have for every $x\in X^{(1)}$ a section $\sigma'_x\in\mathcal{F}(\Spec\mathcal{O}_{X,x})$ mapping to $\sigma_x$ under the sheafification map.

  Since $\mathcal{F}$ is finitary, the section $\sigma_\eta$ extends to a section $\sigma_U\in\mathcal{F}(U)$ for an open subset $U\subseteq X$ whose complement then consists of finitely many points. We want to show that $\sigma_U$ extends over those finitely many points. For any such point $x\in X\setminus U$ there exists an open neighbourhood $V\subseteq X$ of $x$ and a section $\tau_x\in\mathcal{F}(V)$ which induces $\sigma_x'\in\mathcal{F}(\mathcal{O}_{X,x})$. Possibly shrinking $V$, the section $\tau_x$ coincides with $\sigma_U$ on $U\cap V$ because the restriction of $\sigma_x'$ to the generic point coincides with $\sigma_\eta$. By Nisnevich excision applied to the covering of $(X\setminus U)\cup\{x\}$ by $U$ and $V$ we get a section $\tau\in\mathcal{F}((X\setminus U)\cup\{x\})$ inducing $\sigma_U$ and $\tau_x$, thus extending $\sigma_U$ over the point $x$. 
\end{proof}


The following results is in some way already contained in \cite{choudhury}. Choudhury's proof of Morel's conjecture for $H$-groups is based on the fact that for sheaves of groups the weak and strong Grothendieck--Serre property agree and the weak version of the Grothendieck--Serre property follows since $\pi_0^{\mathbb{A}^1}(\mathcal{X})$ is a finitary homotopy-invariant presheaf satisfying Nisnevich excision.

\begin{proposition}
  \label{prop:lnis-hoinv}
  Let $\mathcal{F}$ be a finitary and Nisnevich lexcisive presheaf on ${\rm EssSm}_F$ which satisfies the following:
  \begin{enumerate}[(a)]
  \item $\mathcal{F}$ has the strong Grothendieck--Serre property.
  \item For any finitely generated field extension $L/F$ the projection $\mathbb{A}^1_L\to \Spec L$ induces a bijection $a_{\rm Nis}\mathcal{F}(\Spec L)\xrightarrow{\cong}a_{\rm Nis}\mathcal{F}(\mathbb{A}^1_L)$.
  \end{enumerate}
  Then $a_{\rm Nis}\mathcal{F}$ is homotopy invariant.
\end{proposition}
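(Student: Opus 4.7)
The plan is to establish bijectivity of the pullback $p^*\colon a_{\rm Nis}\mathcal{F}(X)\to a_{\rm Nis}\mathcal{F}(X\times\mathbb{A}^1)$ for every $X\in{\rm EssSm}_F$ by testing equalities of sections at generic points and invoking hypothesis (b) there. Since connected components of essentially smooth $F$-schemes are integral, I may reduce to the case that $X$ is irreducible with function field $K$; then $X\times\mathbb{A}^1$ is irreducible with function field $K(t)$. Applying Proposition~\ref{prop:lnis-unramified-1} to both $X$ and $X\times\mathbb{A}^1$, the restriction maps $a_{\rm Nis}\mathcal{F}(X)\hookrightarrow\mathcal{F}(\Spec K)$ and $a_{\rm Nis}\mathcal{F}(X\times\mathbb{A}^1)\hookrightarrow\mathcal{F}(\Spec K(t))$ are both injective, so detecting sections on the generic fibre will suffice.

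For injectivity of $p^*$, if $p^*\sigma_1=p^*\sigma_2$, restriction along the generic fibre inclusion $\mathbb{A}^1_K\hookrightarrow X\times\mathbb{A}^1$ gives $p_K^*(\sigma_1|_{\Spec K})=p_K^*(\sigma_2|_{\Spec K})$ in $a_{\rm Nis}\mathcal{F}(\mathbb{A}^1_K)$. Since fields are Nisnevich-local, $a_{\rm Nis}\mathcal{F}(\Spec K)=\mathcal{F}(\Spec K)$, and hypothesis (b) says $p_K^*$ is a bijection; hence $\sigma_1$ and $\sigma_2$ already agree on $\Spec K$, and therefore in $a_{\rm Nis}\mathcal{F}(X)$ by Proposition~\ref{prop:lnis-unramified-1}. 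For surjectivity, given $\tau\in a_{\rm Nis}\mathcal{F}(X\times\mathbb{A}^1)$, set $\sigma=s^*\tau\in a_{\rm Nis}\mathcal{F}(X)$, where $s\colon X\to X\times\mathbb{A}^1$ is the zero section, and I claim $p^*\sigma=\tau$. By Proposition~\ref{prop:lnis-unramified-1} for $X\times\mathbb{A}^1$, it is enough to verify this after restriction to $\mathcal{F}(\Spec K(t))$, and this restriction factors through $a_{\rm Nis}\mathcal{F}(\mathbb{A}^1_K)$. Over $\mathbb{A}^1_K$ one computes $p^*\sigma|_{\mathbb{A}^1_K}=p_K^*s_K^*(\tau|_{\mathbb{A}^1_K})$; since $p_K^*$ is a bijection with $s_K^*\circ p_K^*=\op{id}$ (as $p_K\circ s_K=\op{id}$), the map $s_K^*$ must be its two-sided inverse, so $p_K^*\circ s_K^*=\op{id}$ and $p^*\sigma|_{\mathbb{A}^1_K}=\tau|_{\mathbb{A}^1_K}$ as required.

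Conceptually, the whole argument asserts that the strong Grothendieck--Serre property propagates $\mathbb{A}^1$-invariance at fields (hypothesis (b)) up to $\mathbb{A}^1$-invariance of the full Nisnevich sheaf, by reducing everything to generic points through Proposition~\ref{prop:lnis-unramified-1}. The main obstacle is really the one already dispatched by that proposition: without generic-point injectivity of $a_{\rm Nis}\mathcal{F}$, one would be forced to argue henselian-locally, but hypothesis (b) is only available at fields. The Nisnevich lexcisive hypothesis plays no direct role in the argument above; it is presumably included because it is how one verifies hypothesis (b) in applications, via the dimension-one surjectivity statements of Propositions~\ref{prop:lnis-dvr} and~\ref{prop:lnis-dim1}.
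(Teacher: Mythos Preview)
Your proof is correct and follows essentially the same route as the paper: reduce to the irreducible case, use Proposition~\ref{prop:lnis-unramified-1} (via the strong Grothendieck--Serre property) to detect sections on the generic point, and then apply hypothesis (b) over $\mathbb{A}^1_K$ together with the section--retraction relation $s_K^*\circ p_K^*=\op{id}$. The only cosmetic difference is that the paper dispatches injectivity more cheaply by noting $s^*\circ p^*=\op{id}$ on $X$ directly, whereas you route even injectivity through the generic point; both are fine.
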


\begin{proof}
  We want to show that for any smooth scheme $X$ the projection $\op{pr}_1\colon X\times\mathbb{A}^1\to X$ induces a bijection
  \[
  a_{\rm Nis}\mathcal{F}(X)\xrightarrow{\cong} a_{\rm Nis}\mathcal{F}(X\times\mathbb{A}^1).
  \]
  This map is always injective since the composition $X\xrightarrow{0}X\times\mathbb{A}^1\xrightarrow{\op{pr}_1} X$ is the identity. For the proof of surjectivity, let $\sigma\in a_{\rm Nis}\mathcal{F}(X\times\mathbb{A}^1)$ be a section. Pullback along the composition $X\times\mathbb{A}^1\xrightarrow{\op{pr}_1} X\xrightarrow{0}X\times\mathbb{A}^1$ produces a constant section $\tau$, and we want to show that $\sigma=\tau$ in $a_{\rm Nis}\mathcal{F}(X\times\mathbb{A}^1)$. 

  Let $X$ be an irreducible smooth scheme $X$, denote by $K$ the function field of $X$, and choose a coordinate $T$ on $\mathbb{A}^1$. By Assumption (a) and Proposition~\ref{prop:lnis-unramified-1}, the composition
  \[
  a_{\rm Nis}\mathcal{F}(X\times\mathbb{A}^1)\to a_{\rm Nis}\mathcal{F}(\Spec K[T])\to a_{\rm Nis}\mathcal{F}(\Spec K(T))
  \]
  is injective. Thus the first map is injective, and it suffices to show that the restrictions of $\sigma$ and $\tau$ agree in $a_{\rm Nis}\mathcal{F}(\Spec K[T])$. From the following commutative diagram
  \[
  \xymatrix{
    \Spec(K[T])\ar[d]_{\iota[T]} \ar[r]^{T\mapsto 0} & \Spec(K) \ar[d]_\iota \ar[r]^c & \Spec(K[T]) \ar[d]^{\iota[T]} \\
    X\times \mathbb{A}^1 \ar[r]_{\op{pr}_1} & X \ar[r]_0 & X\times\mathbb{A}^1
  }
  \]
  where $c$ is the inclusion of constants and $\iota\colon \Spec(K)\hookrightarrow X$ the inclusion of the generic point, we see that restricting $\tau$ along $\iota[T]$ is the same as making the restriction of $\sigma$ along $\iota[T]$ constant by pullback along $\Spec(K[T])\xrightarrow{T\mapsto 0} \Spec(K)\xrightarrow{c} \Spec(K[T])$. 

  Denote by $\sigma_\eta$ and $\tau_\eta$ the restrictions of $\sigma$ and $\tau$ to $\Spec K[T]$. By Assumption (b), all the maps in the composition
  \[
  a_{\rm Nis}\mathcal{F}(\Spec K)\xrightarrow{\op{pr}_1^\ast} a_{\rm Nis}\mathcal{F}(\Spec K[T]) \xrightarrow{0^\ast} a_{\rm Nis}\mathcal{F}(\Spec K)
  \]
  are bijections, the composition being the identity. Therefore, $\sigma_\eta=\tau_\eta$ in $a_{\rm Nis}\mathcal{F}(\mathbb{A}^1_K)$ because by construction their images under $0^\ast$ agree. This finishes the proof.
\end{proof}

Note that since we can glue torsors in the Nisnevich topology $H^1_{\et}(\--,G)$ satisfies Nisnevich lexcision. Now we now want to apply this result to prove homotopy invariance for the Nisnevich sheaf $\mathcal{H}^1_{\et}(G)$. To do that, we recall some results on $\mathbb{A}^1$-invariance of \'etale torsors over fields. The most general result we know of is the following, from \cite{raghunathan:ramanathan}.

\begin{theorem}
  \label{thm:raghunathan-ramanathan}
  Let $F$ be a field with separable closure $F^{\rm sep}$ and let $G$ be a connected reductive group. Then any $G$-torsor over $\mathbb{A}^1_F$ which becomes trivial over $\mathbb{A}^1_{F^{\rm sep}}$ is extended from $F$, i.e.,  pullback induces a bijection
  \[
    {\rm H}^1_{\et}(\Spec F,G)\to\ker\left({\rm H}^1_{\et}(\mathbb{A}^1_F,G)\to  {\rm H}^1_{\et}(\mathbb{A}^1_{F^{\rm sep}},G)\right).
    \]
\end{theorem}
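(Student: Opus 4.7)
The plan is to use Galois descent to convert the statement into a non-abelian cohomology vanishing. First, injectivity of the pullback map $\pi^\ast$ is automatic, since the zero section $\iota\colon \Spec F \hookrightarrow \mathbb{A}^1_F$ retracts $\pi$ and hence retracts $\pi^\ast$. The substance is therefore surjectivity: every $G$-torsor $P$ on $\mathbb{A}^1_F$ that trivializes over $\mathbb{A}^1_{F^{\rm sep}}$ is extended from $F$. Set $\Gamma=\operatorname{Gal}(F^{\rm sep}/F)$. Non-abelian Hochschild--Serre for the pro-Galois cover $\mathbb{A}^1_{F^{\rm sep}}\to\mathbb{A}^1_F$ identifies the kernel of restriction with $H^1(\Gamma,G(F^{\rm sep}[t]))$, and similarly ${\rm H}^1_{\et}(\Spec F,G)=H^1(\Gamma,G(F^{\rm sep}))$. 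Evaluation at $t=0$ provides a $\Gamma$-equivariant retraction of the constants inclusion $G(F^{\rm sep})\hookrightarrow G(F^{\rm sep}[t])$, producing a split short exact sequence of $\Gamma$-groups
\[
1\to G^1(F^{\rm sep}[t])\to G(F^{\rm sep}[t])\to G(F^{\rm sep})\to 1,
\]
where $G^1$ denotes polynomial maps $\mathbb{A}^1\to G$ sending $0$ to $e$.

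The splitting immediately yields surjectivity of $H^1(\Gamma,G(F^{\rm sep}[t]))\to H^1(\Gamma,G(F^{\rm sep}))$, and the remaining task is injectivity. By the twisting trick in non-abelian Galois cohomology, the fiber over a class $[\alpha]$ is controlled by $H^1(\Gamma,({}^\alpha G)^1(F^{\rm sep}[t]))$, where ${}^\alpha G$ is the inner form of $G$ obtained by twisting by $\alpha$. Since inner forms of connected reductive groups are connected reductive, the theorem reduces to the uniform vanishing claim
\[
H^1(\Gamma,G^1(F^{\rm sep}[t]))=\ast\quad\text{for every connected reductive $G/F$.}
\]

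I would establish this vanishing by d\'evissage along the structure of $G$. For an $F$-torus $T$: since $T_{F^{\rm sep}}$ splits and morphisms $\mathbb{A}^1_{F^{\rm sep}}\to\mathbb{G}_m$ are units in $F^{\rm sep}[t]$ hence constants, one has $T(F^{\rm sep}[t])=T(F^{\rm sep})$ and thus $T^1(F^{\rm sep}[t])=1$. For a smooth unipotent $U$ the lower central series has $\mathbb{G}_a^m$-subquotients; the Galois module $(\mathbb{G}_a^m)^1(F^{\rm sep}[t])=(tF^{\rm sep}[t])^m$ is $\Gamma$-semilinear over $F^{\rm sep}$, so its $H^1$ vanishes by additive Hilbert~90, and the long exact sequences associated to the central filtration then propagate the vanishing to $H^1(\Gamma,U^1(F^{\rm sep}[t]))=\ast$.

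The main obstacle is gluing the torus and unipotent pieces for a genuinely reductive $G$, which is the heart of Raghunathan--Ramanathan's argument. Over $\mathbb{A}^1_{F^{\rm sep}}$ the torsor $P$ is trivial and thus admits a reduction of structure group to any Borel $B=T\ltimes U_B$, but descending this reduction $\Gamma$-equivariantly to an $F$-rational parabolic of $G$ is delicate, since a non-quasi-split $G$ need not contain an $F$-rational Borel. This forces one to work with $F$-parabolics of higher type and to induct on the dimension of their unipotent radicals, checking $\Gamma$-equivariance of the structural reduction at each stage, exploiting the Levi splitting, and invoking the torus and unipotent vanishings above. The real work lies in controlling the interaction between the Galois action on the twisted trivialization of $P$ and the rationality of the parabolic filtration of $G$.
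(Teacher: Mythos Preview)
The paper does not give its own proof of this statement; it is quoted from Raghunathan--Ramanathan \cite{raghunathan:ramanathan} and used as a black box input (see the sentence introducing the theorem and its subsequent applications in Propositions~\ref{prop:hoinv-h1etg}--\ref{prop:hoinv-pgln}). So there is nothing in the paper to compare your argument against.

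That said, your outline is a faithful sketch of the actual Raghunathan--Ramanathan argument. The reduction via non-abelian Hochschild--Serre and the twisting trick to the uniform vanishing of $H^1\bigl(\Gamma,G^1(F^{\rm sep}[t])\bigr)$ over all connected reductive $G$ is exactly how they set things up, and your treatment of the torus and unipotent cases is correct. You are also right that the substantive content is the reductive step: producing a $\Gamma$-equivariant reduction to an $F$-rational parabolic and inducting via the Levi decomposition. Your last two paragraphs correctly name both the strategy and the obstacle, but they stop short of executing it; as written this is an honest outline rather than a proof, and a reader would still have to go to \cite{raghunathan:ramanathan} for the parabolic-reduction step. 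If you intend this as a proof sketch pointing to the literature, it is fine; if you intend it as a self-contained proof, the reductive d\'evissage needs to be carried out.
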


In particular, for a field $F$ of characteristic $0$, \'etale torsors satisfy $\mathbb{A}^1$-invariance over all extension fields $L/F$ in the sense that the pullback map ${\rm H}^1_{\et}(\Spec L,G)\to{\rm H}^1_{\et}(\mathbb{A}^1_L,G)$ is a bijection. In this case, we get homotopy invariance for $\mathcal{H}^1_{\et}(G)$  for any connected reductive group $G$. 

\begin{proposition}
  \label{prop:hoinv-h1etg}
  Let $F$ be a field of characteristic $0$ and $G$ be a connected reductive group over $F$. Then the sheaf $\mathcal{H}^1_{\et}(G)$ is homotopy invariant.
\end{proposition}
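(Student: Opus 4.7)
The plan is to apply Proposition~\ref{prop:lnis-hoinv} to the presheaf $\mathcal{F}={\rm H}^1_{\et}(-,G)$ on ${\rm EssSm}_F$, so the work reduces to verifying that this presheaf is finitary, Nisnevich lexcisive, has the strong Grothendieck--Serre property, and that its Nisnevich sheafification is $\mathbb{A}^1$-invariant on all finitely generated extension fields $L/F$. Finitariness is standard for $G$ of finite presentation: \'etale cohomology (and in particular the set of isomorphism classes of $G$-torsors) converts cofiltered limits of qcqs schemes with affine transition maps into filtered colimits. Nisnevich lexcision is exactly the statement that one can glue \'etale torsors along an elementary distinguished square in the Nisnevich topology, which is the remark already made in the paper just before the proposition. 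The strong Grothendieck--Serre property is precisely the Grothendieck--Serre conjecture for $G$-torsors on essentially smooth local $F$-algebras, which is known for connected reductive $G$ over a field by the theorems of Fedorov--Panin and Panin cited in the introduction.

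The nontrivial verification is hypothesis (b) of Proposition~\ref{prop:lnis-hoinv}. The key input is Theorem~\ref{thm:raghunathan-ramanathan}: since $F$ has characteristic zero, for any extension $L/F$ the separable closure $L^{\rm sep}$ is algebraically closed, and for a connected reductive group over an algebraically closed field of characteristic zero every $G$-torsor on $\mathbb{A}^1$ is trivial. Hence the kernel in Theorem~\ref{thm:raghunathan-ramanathan} is all of ${\rm H}^1_{\et}(\mathbb{A}^1_L,G)$, so pullback along the projection induces a bijection at the presheaf level
\[
{\rm H}^1_{\et}(\Spec L,G)\xrightarrow{\cong}{\rm H}^1_{\et}(\mathbb{A}^1_L,G).
\]

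To upgrade this to the Nisnevich sheafifications, I use the fact that fields are Nisnevich-local, so $a_{\rm Nis}\mathcal{F}(\Spec L)=\mathcal{F}(\Spec L)$, and that $\mathbb{A}^1_L$ has Krull dimension one, so Proposition~\ref{prop:lnis-dim1} (which applies because $\mathcal{F}$ is finitary and Nisnevich lexcisive) gives that the sheafification map $\mathcal{F}(\mathbb{A}^1_L)\twoheadrightarrow a_{\rm Nis}\mathcal{F}(\mathbb{A}^1_L)$ is surjective. Composing with the presheaf-level bijection gives surjectivity of $a_{\rm Nis}\mathcal{F}(\Spec L)\to a_{\rm Nis}\mathcal{F}(\mathbb{A}^1_L)$; injectivity is immediate from the retraction provided by the zero section. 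This yields a bijection on Nisnevich sheafifications, so hypothesis (b) holds and Proposition~\ref{prop:lnis-hoinv} applies.

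The main subtlety is the combined use of characteristic zero: it is needed both to secure the triviality of $G$-torsors on $\mathbb{A}^1$ over $L^{\rm sep}=\overline{L}$ (which is the positive-characteristic obstruction in Theorem~\ref{thm:raghunathan-ramanathan}), and to make the passage from the field-level Raghunathan--Ramanathan statement to the ``strict'' field-wise $\mathbb{A}^1$-invariance required by hypothesis (b) of Proposition~\ref{prop:lnis-hoinv}. Everything else is bookkeeping once the four bullet points above are checked.
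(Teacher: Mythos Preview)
Your proof is correct and follows essentially the same route as the paper: verify the hypotheses of Proposition~\ref{prop:lnis-hoinv}, obtain the strong Grothendieck--Serre property from \cite{fedorov:panin}, and deduce condition~(b) from Raghunathan--Ramanathan together with triviality of $G$-torsors on $\mathbb{A}^1_{L^{\rm sep}}$, using Proposition~\ref{prop:lnis-dim1} to pass between presheaf and sheaf values on $\mathbb{A}^1_L$. The only cosmetic difference is that the paper cites Steinberg's theorem \cite{steinberg}*{Theorem 1.9} for the triviality of $G$-torsors over $\mathbb{A}^1_{L^{\rm sep}}$ (via cohomological dimension $\leq 1$), whereas you assert this fact directly; you should make that citation explicit.
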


\begin{proof}
We use Proposition~\ref{prop:lnis-hoinv} with the presheaf $U\mapsto {\rm H}^1_{\et}(U,G)$. The strong Grothendieck--Serre property follows from \cite{fedorov:panin}. For the $\mathbb{A}^1$-invariance over extension fields $L/F$, we note that by Proposition~\ref{prop:lnis-dim1} the map ${\rm H}^1_{\et}(\mathbb{A}^1_L,G)\to \mathcal{H}^1_{\et}(G)(\mathbb{A}^1_L)$ is surjective, i.e., any section of $\mathcal{H}^1_{\et}(G)$ is actually induced from a $G$-torsor over $\mathbb{A}^1_L$. After base change to the separable closure $L^{\rm sep}$, the resulting $G$-torsor over $\mathbb{A}^1_{L^{\rm sep}}$ becomes trivial, by a theorem of Steinberg \cite{steinberg}*{Theorem 1.9}. Thus we can apply Theorem~\ref{thm:raghunathan-ramanathan} to see that the $G$-torsor over $\mathbb{A}^1_L$ must be constant, finishing the proof.
\end{proof}

\begin{remark}
  This generalizes the observation that for Parimala's non-extended quadratic form over $\mathbb{A}^2_{\mathbb{R}}$, there is an open subset of $\mathbb{A}^2_{\mathbb{R}}$ where the torsor is constant and extended from the anisotropic form over $\mathbb{R}$, cf. \cite{lam:book}*{Lemma VII.4.14 resp. Lemma VII.4.16}.
\end{remark}

We note two other cases of interest over fields of positive characteristic where we get homotopy invariance for $\mathcal{H}^1_{\et}(G)$.

\begin{proposition}
  \label{prop:hoinv-on}
  Let $F$ be a field of characteristic $\neq 2$. Then the sheaf $\mathcal{H}^1_{\et}({\rm O}(n))$ is homotopy invariant. 
\end{proposition}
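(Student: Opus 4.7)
The plan is to imitate the proof of Proposition~\ref{prop:hoinv-h1etg} and apply Proposition~\ref{prop:lnis-hoinv} to the presheaf $\mathcal{F}(U) := {\rm H}^1_{\et}(U, {\rm O}(n))$. This presheaf is finitary because étale cohomology of an affine group scheme of finite type converts cofiltered limits of qcqs schemes with affine transition maps into filtered colimits, and it is Nisnevich lexcisive because ${\rm O}(n)$-torsors glue in the Nisnevich topology. So only hypotheses (a) and (b) of Proposition~\ref{prop:lnis-hoinv} require verification.

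For hypothesis (a), the strong Grothendieck--Serre property for ${\rm O}(n)$-torsors over a regular local $F$-algebra $R$ with fraction field $K$ is exactly the statement that two non-degenerate rank $n$ quadratic forms over $R$ are isomorphic as soon as they become isomorphic over $K$. Since $1/2 \in R$, this is classical; it can be invoked either from the work of Ojanguren (and later Panin--Pimenov) on quadratic forms over regular local rings, or, under the mild additional hypothesis needed, from the Grothendieck--Serre results of \cite{fedorov:panin} and \cite{panin} applied to the identity component ${\rm SO}(n)$ combined with the analogous statement for the étale $\mathbb{Z}/2$ quotient.

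For hypothesis (b), let $L/F$ be a finitely generated extension. First I would use Proposition~\ref{prop:lnis-dim1} to reduce the bijectivity of $a_{\rm Nis}\mathcal{F}(\Spec L)\to a_{\rm Nis}\mathcal{F}(\mathbb{A}^1_L)$ to the analogous statement about honest torsors: that the pullback ${\rm H}^1_{\et}(\Spec L, {\rm O}(n))\to {\rm H}^1_{\et}(\mathbb{A}^1_L, {\rm O}(n))$ is a bijection. Injectivity is automatic from the zero-section splitting, and surjectivity is precisely Harder's theorem on extendedness of non-degenerate quadratic forms over the polynomial ring $L[t]$ when $\mathrm{char}(L)\neq 2$. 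Together with (a), Proposition~\ref{prop:lnis-hoinv} then yields homotopy invariance of $\mathcal{H}^1_{\et}({\rm O}(n))$.

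The main obstacle, and the reason one cannot simply quote the argument of Proposition~\ref{prop:hoinv-h1etg} verbatim, is that ${\rm O}(n)$ is disconnected, so neither Theorem~\ref{thm:raghunathan-ramanathan} nor Steinberg's vanishing of torsors over $\mathbb{A}^1_{L^{\rm sep}}$ applies. The input for (b) must therefore come directly from quadratic-form theory via Harder's theorem, which also explains why the characteristic hypothesis $\mathrm{char}(F)\neq 2$ enters in an essential way. Verifying (a) in a form applicable to the disconnected group ${\rm O}(n)$ (as opposed to the reductive-connected case covered by the standard references) is the step that requires the most care.
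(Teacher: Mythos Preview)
Your proposal is correct and follows essentially the same approach as the paper: apply Proposition~\ref{prop:lnis-hoinv}, invoke the Grothendieck--Serre results of \cite{fedorov:panin} and \cite{panin} for hypothesis (a), and use Harder's theorem for the $\mathbb{A}^1$-invariance of quadratic forms over fields for hypothesis (b). Your explicit remark that ${\rm O}(n)$ is disconnected (so that Theorem~\ref{thm:raghunathan-ramanathan} and Steinberg's theorem do not apply and one must appeal directly to Harder's theorem) is a useful clarification that the paper leaves implicit.
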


\begin{proof}
  As in the proof of Proposition~\ref{prop:hoinv-h1etg}, we apply Proposition~\ref{prop:lnis-hoinv} to the presheaf $U\mapsto {\rm H}^1_{\et}(U,G)$ using the strong Grothendieck--Serre property from \cite{fedorov:panin} (for infinite fields) and \cite{panin} (for finite fields). The $\mathbb{A}^1$-invariance for ${\rm O}(n)$-torsors over fields of characteristic $\neq 2$ is Harder's theorem, cf.~\cite{knebusch}*{Theorem 13.4.3} resp. \cite{lam:book}*{Theorem VII.3.13}.
\end{proof}

\begin{proposition}
  \label{prop:hoinv-pgln}
  Let $F$ be a field of characteristic $p$. If $p\nmid n$, then the sheaf $\mathcal{H}^1_{\et}({\rm PGL}_n)$ is homotopy invariant. 
\end{proposition}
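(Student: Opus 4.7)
The plan is to apply Proposition~\ref{prop:lnis-hoinv} to the finitary, Nisnevich-lexcisive presheaf $U \mapsto {\rm H}^1_{\et}(U, {\rm PGL}_n)$, following the identical template used for Propositions~\ref{prop:hoinv-h1etg} and~\ref{prop:hoinv-on}. Since ${\rm PGL}_n$ is a connected reductive group, the strong Grothendieck--Serre property (condition (a) of Proposition~\ref{prop:lnis-hoinv}) is furnished by \cite{fedorov:panin} for infinite base fields and by \cite{panin} for finite ones, with no characteristic restriction.

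The substance of the argument lies in verifying condition (b), namely $\mathbb{A}^1$-invariance of $a_{\rm Nis}\mathcal{F}$ over an arbitrary finitely generated extension $L/F$. Exactly as in the proof of Proposition~\ref{prop:hoinv-h1etg}, Proposition~\ref{prop:lnis-dim1} allows me to lift any section of $\mathcal{H}^1_{\et}({\rm PGL}_n)(\mathbb{A}^1_L)$ to a genuine ${\rm PGL}_n$-torsor on $\mathbb{A}^1_L$, and then Theorem~\ref{thm:raghunathan-ramanathan} reduces the problem to showing that every such torsor becomes trivial after base change to $\mathbb{A}^1_{L^{\rm sep}}$. This is the only step where the argument of Proposition~\ref{prop:hoinv-h1etg} needs to be modified, since Steinberg's theorem is applied there to the group ${\rm PGL}_n$ itself in a manner that is not available in positive characteristic.

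To handle this triviality, I would exploit the hypothesis $p \nmid n$ via the short exact sequence of étale sheaves of groups
\[
1 \to \mu_n \to {\rm SL}_n \to {\rm PGL}_n \to 1,
\]
whose exactness in the étale topology requires precisely that $\mu_n$ be smooth, i.e., $p \nmid n$. The associated long exact sequence of pointed sets contains
\[
{\rm H}^1_{\et}(\mathbb{A}^1_{L^{\rm sep}}, {\rm SL}_n) \to {\rm H}^1_{\et}(\mathbb{A}^1_{L^{\rm sep}}, {\rm PGL}_n) \to {\rm H}^2_{\et}(\mathbb{A}^1_{L^{\rm sep}}, \mu_n),
\]
so vanishing of the middle term reduces to vanishing at both ends. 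The left-hand term vanishes by Steinberg's theorem \cite{steinberg}*{Theorem 1.9}, which applies over a separably closed field to the simply connected semisimple group ${\rm SL}_n$. The right-hand term vanishes by $\mathbb{A}^1$-homotopy invariance of étale cohomology with $\mu_n$-coefficients (again valid since $p \nmid n$) combined with ${\rm H}^2_{\et}(L^{\rm sep}, \mu_n) = 0$, which holds because $L^{\rm sep}$ has cohomological dimension zero with respect to primes prime to $p$.

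The main obstacle is the careful bookkeeping around the hypothesis $p \nmid n$, which is used essentially twice: once for the étale exactness of the Kummer-type sequence and once for homotopy invariance of $\mu_n$-cohomology. Everything else is a direct assembly of Proposition~\ref{prop:lnis-hoinv}, Proposition~\ref{prop:lnis-dim1}, Theorem~\ref{thm:raghunathan-ramanathan}, and Steinberg's theorem, so the proof should be nearly as short as those of Propositions~\ref{prop:hoinv-h1etg} and~\ref{prop:hoinv-on}.
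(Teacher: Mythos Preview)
Your strategy is correct and closely mirrors the paper's, but you use a slightly different central extension to carry out the key vanishing step. The paper works with $1\to\mathbb{G}_{\rm m}\to{\rm GL}_n\to{\rm PGL}_n\to 1$ and argues that the boundary ${\rm H}^1_{\et}(\mathbb{A}^1_{L^{\rm sep}},{\rm PGL}_n)\to{\rm H}^2_{\et}(\mathbb{A}^1_{L^{\rm sep}},\mathbb{G}_{\rm m})$ is trivial because ${\rm Br}(\mathbb{A}^1_{L^{\rm sep}})$ is $p$-torsion (Auslander--Goldman) while the boundary factors through the $n$-torsion group ${\rm H}^2_{\et}(\mathbb{A}^1_{L^{\rm sep}},\mu_n)$; the hypothesis $p\nmid n$ then forces the image to vanish, and one lifts to a ${\rm GL}_n$-torsor, necessarily trivial. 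Your route via $1\to\mu_n\to{\rm SL}_n\to{\rm PGL}_n\to 1$ is arguably cleaner: it bypasses the Grothendieck--Gabber identification of cohomological and classical Brauer groups and the Auslander--Goldman theorem, replacing them with direct $\mathbb{A}^1$-invariance of \'etale cohomology with $\mu_n$-coefficients. Both arguments use $p\nmid n$ in essentially the same place.

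One correction: your appeal to Steinberg's theorem for the vanishing of ${\rm H}^1_{\et}(\mathbb{A}^1_{L^{\rm sep}},{\rm SL}_n)$ is misplaced, since Steinberg's result \cite{steinberg}*{Theorem 1.9} concerns torsors over \emph{fields}, not over $\mathbb{A}^1$. The vanishing you need is simpler and independent of the base field: ${\rm SL}_n$ is special in the sense of Serre, so ${\rm H}^1_{\et}(\mathbb{A}^1_{L^{\rm sep}},{\rm SL}_n)$ classifies Zariski-locally trivial ${\rm SL}_n$-torsors, i.e., rank $n$ vector bundles with trivialized determinant, and all vector bundles on $\mathbb{A}^1$ over a field are free. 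With that substitution your argument is complete.
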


\begin{proof}
  As before, we need only deal with the $\mathbb{A}^1$-invariance over fields. By Theorem~\ref{thm:raghunathan-ramanathan}, it suffices to show that for $L$ a separably closed extension field of $F$ all ${\rm PGL}_n$-torsors over $\mathbb{A}^1_L$ are trivial. Consider the exact sequence (of pointed sets)
  \[
    {\rm H}^1_{\et}(\mathbb{A}^1_L,{\rm GL}_n)\to {\rm H}^1_{\et}(\mathbb{A}^1_L,{\rm PGL}_n)\to {\rm H}^2_{\et}(\mathbb{A}^1_L,\mathbb{G}_{\rm m})
  \]
  associated to the extension $\mathbb{G}_{\rm m}\to {\rm GL}_n\to {\rm PGL}_n$. Here ${\rm H}^2_{\et}(\mathbb{A}^1_L,\mathbb{G}_{\rm m})$ is the cohomological Brauer group since $\mathbb{A}^1_L$ is regular (by a theorem of Grothendieck) which in turn is identified with the Brauer group ${\rm Br}(\mathbb{A}^1_L)$ (by a theorem of Gabber). Exactness for the sequence of pointed sets then means that any ${\rm PGL}_n$-torsor whose Brauer class is trivial comes from a vector bundle (and therefore has to be trivial). By the Auslander--Goldman theorem (together with our assumption that $L$ is separably closed), ${\rm Br}(\mathbb{A}^1_L)$ is a $p$-torsion group. Since the boundary map factors as
  \[
    {\rm H}^1_{\et}(\mathbb{A}^1_L,{\rm PGL}_n)\to {\rm H}^2_{\et}(\mathbb{A}^1_L,\mu_n)\to {\rm H}^2_{\et}(\mathbb{A}^1_L,\mathbb{G}_{\rm m})
  \]
  and ${\rm H}^2_{\et}(\mathbb{A}^1_L,\mu_n)$  is an $n$-torsion group, our assumption $p\nmid n$ implies that ${\rm PGL}_n$-torsors over $\mathbb{A}^1_L$ have trivial Brauer classes. The exact cohomology sequence now implies that every ${\rm PGL}_n$-torsor over $\mathbb{A}^1_L$ is trivial, as required.
\end{proof}

\begin{remark}
  In \cite{knus:ojanguren:saltman}, there are examples of ${\rm PGL}_n$-torsors over the affine line $\mathbb{A}^1_L$ for separably closed but non-algebraically closed fields $L$ which are not extended. These are related to non-trivial $p$-coverings of $\mathbb{A}^1_L$, where $p=\op{char}(L)$ (in particular these torsors are not rationally trivial). This means that the divisibility condition in Proposition~\ref{prop:hoinv-pgln} is necessary and homotopy invariance generally fails for ${\rm PGL}_n$-torsors if the characteristic of the base field divides $n$.
\end{remark}

\begin{remark}
  In case anyone is interested, it is also possible to show that $\mathcal{H}^1_{\et}({\rm G}_2)$ is homotopy invariant over base fields of characteristic $\neq 2$ and that $\mathcal{H}^1_{\et}({\rm F}_4)$ is homotopy invariant over base fields of characteristic $\neq 2,3$.

  For this,  we can reduce to the case of a separably closed field $L$ of characteristic $\neq 2$ as in Proposition~\ref{prop:hoinv-pgln}. For a $G$-torsor $\mathcal{T}$ over $\mathbb{A}^1_L$, the restriction to the function field $L(T)$ is trivial if and only if the relevant cohomological invariants are trivial, cf. \cite{serre}. For $G={\rm G}_2$, the classifying cohomological invariant is the class of the norm form in ${\rm H}^3_{\et}(-,\mu_2)$, and for $G={\rm F}_4$, the cohomological invariants are Pfister forms $f_3\in{\rm H}^3_{\et}(-,\mu_2)$ and $f_5\in {\rm H}^5_{\et}(-,\mu_2)$ and the Rost invariant $g_3\in{\rm H}^3_{\et}(-,\mu_3)$. Since $L$ is separably closed, all these \'etale cohomology groups vanish for $L(T)$ which has \'etale cohomological dimension 1. Therefore, the restriction of the $G$-torsor $\mathcal{T}$ to the generic point of $\mathbb{A}^1_L$ is trivial because it has trivial cohomological invariants. Since rationally trivial torsors satisfy homotopy invariance, $\mathcal{T}$ has to be trivial.

  This argument via rationally trivial torsors could also be helpful to establish homotopy invariance of $\mathcal{H}^1_{\et}(G)$ for other cases of connected reductive groups $G$ in positive characteristic.
\end{remark}

\begin{remark}
  The most general result one could expect here is that $\mathbb{A}^1$-invariance (and thus homotopy invariance generally) holds for a reductive group $G$ over a field $F$ of characteristic $p$ if both the group $\pi_0(G)$ of connected components of $G$ and the Chevalley fundamental group $\Pi_1(G)$ of $G$ have orders prime to the characteristic of the base field, and $p$ is not a torsion prime for any of the almost simple components of $G$. However, results in this generality seem not known at this point.
\end{remark}


\section{Purity for torsors}
\label{sec:purity}

In this section we show that unramifiedness of the sheaf $\mathcal{H}^1_{\et}(G)$ is equivalent to Nisnevich-local purity for $G$-torsors. In situations where local purity results are known, this allows to compute $\mathcal{H}^1_{\et}(G)$ and to reinterpret purity for torsors as surjectivity of the sheafification map ${\rm H}^1_{\et}(X,G)\to \mathcal{H}^1_{\et}(G)(X)$ for smooth schemes $X$. It will also provide a relation between global purity questions and motivic homotopy, cf. Section~\ref{sec:examples}.

\begin{definition}
  \label{def:purity}
  Let $G$ be a connected reductive group over a field $F$ and let $X$ be an irreducible smooth $F$-scheme with function field $K$. We say that \emph{purity} is satisfied for $G$-torsors over $X$ if the map
  \[
  \op{im}\left({\rm H}^1_{\et}(X,G)\to {\rm H}^1_{\et}(\Spec K,G)\right)\to \bigcap_{x\in X^{(1)}}\op{im}\left({\rm H}^1_{\et}(\Spec \mathcal{O}_{X,x},G)\to {\rm H}^1_{\et}(\Spec K,G)\right)
  \]
  is surjective.
\end{definition}

\begin{remark}
  If $G$ is a special group in the sense of Serre, i.e., $G$-torsors over reduced $F$-varieties are Zariski-locally trivial then ${\rm H}^1_{\et}(\Spec K,G) = \{ \ast \}$ so that Definition~\ref{def:purity} is trivially satisfied.
\end{remark}

\begin{remark}\label{rem:purity}
  The question of purity for torsors was formulated by Colliot-Th{\'e}l{\`e}ne and Sansuc in \cite{colliot-thelene:sansuc}*{Question 6.4}. They also proved a purity theorem for the case of integral regular schemes $X$ of dimension $2$ and reductive groups $G$, cf. \cite{colliot-thelene:sansuc}*{Theorem 6.13 resp. Corollary 6.14}. Local purity, i.e., purity for regular local rings, is known in many cases of connected reductive groups over characteristic $0$ fields, e.g. most of the classical groups \cite{panin:purity}. Local purity for ${\rm PGL}_n$-torsors whose Brauer class has invertible exponent has been proved in \cite{antieau:williams}*{Theorem 3.10}. Local purity for the orthogonal groups in characteristic $\neq 2$ has been proved in \cite{panin:pimenov}.  Local purity for ${\rm G}_2$-torsors  over fields of characteristic $\neq 2$ has been proved in \cite{chernousov:panin} (in combination with \cite{panin:pimenov}*{Remark 3.2}). Local purity for $F_4$ torsors with trivial $g_3$ invariant in characteristic $\neq 2$ has been proved in \cite{chernousov:paninA}.

  However, it seems at this point that local purity hasn't been proved in general (and uniformly, not based on case-by-case analysis), not even for henselian regular local rings. Most of the proofs seem to work via stabilization in infinite series (for the classical groups) combined with the additional benefits on a group structure for the stable groups, or via cohomological invariants (for ${\rm G}_2$ and ${\rm F}_4$).
\end{remark}

For the sake of completeness, let us recall the following notion from \cite{MField}*{Definition 2.1}.

\begin{definition}\label{def:unramified} Let $F$ be a field. A finitary presheaf $\mathcal{G}: {\rm EssSm}^{\rm op}_F \rightarrow {\rm Sets}$ is said to be \emph{unramified} if 
\begin{enumerate}
\item if $X \in {\rm Sm}_F$ is a union of irreducible components $X_{\alpha}$ then $\mathcal{G}(X) \rightarrow \prod_{\alpha} \mathcal{G}(X_{\alpha})$ is a bijection,
\item for any $X \in {\rm Sm}_F$ and any dense open $U \subset X$, the map $\mathcal{G}(X) \rightarrow \mathcal{G}(U)$ is injective and,
\item if $X \in {\rm Sm}_F$ is furthermore irreducible and $F(X)$ is its function field, then the injection (of subsets of $\mathcal{G}(F(X))$
\[
\mathcal{G}(X) \rightarrow \cap_{x \in X^{(1)}} \mathcal{G}(\mathcal{O}_{X,x})
\]
is a bijection.
\end{enumerate}
\end{definition}

We note that any unramified presheaf is, in fact, a Zariski sheaf \cite{MField}*{Remark 2.2}. 

\begin{lemma}\label{lem:unram-gs}   Let $\mathcal{G}$ be a finitary presheaf on ${\rm EssSm}_F$ which is furthermore a Nisnevich sheaf and satisfies (1) and (2) of Definition~\ref{def:unramified}. Then $\mathcal{G}$ is unramified if and only if it satisfies:
\begin{enumerate}
\item[(3')] for every $X \in {\rm Sm}_F$ and any $x \in X$, writing $Y = \Spec \mathcal{O}^{\rm h}_{X,x}$, the injection (of subsets of $\mathcal{G}({\rm Frac}(\mathcal{O}^{\rm h}_{X,x}))$):
\[
\mathcal{G}(Y) \rightarrow \bigcap_{y \in Y^{(1)}}\mathcal{G}(\mathcal{O}_{Y,y})
\]
is a bijection.
\end{enumerate}
\end{lemma}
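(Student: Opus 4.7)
The plan is to prove both implications, noting that in both (3) and (3') the maps are already injective by condition (2) together with the finitariness of $\mathcal{G}$ (so that for any irreducible essentially smooth scheme the restriction to the generic point is injective). Thus only the surjectivity of the respective restriction maps requires argument.

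For (3') $\Rightarrow$ (3): let $X$ be an irreducible smooth $F$-scheme with function field $K$ and let $\alpha\in\bigcap_{x\in X^{(1)}}\mathcal{G}(\mathcal{O}_{X,x})\subseteq\mathcal{G}(K)$. For each $y\in X$, write $Y_y=\Spec\mathcal{O}^{\rm h}_{X,y}$. Since $X$ is smooth, $\mathcal{O}^{\rm h}_{X,y}$ is a regular local domain and the generic fiber of $\mathcal{O}_{X,y}\to\mathcal{O}^{\rm h}_{X,y}$ is a field, so every height 1 prime of $\mathcal{O}^{\rm h}_{X,y}$ contracts to a height 1 prime of $\mathcal{O}_{X,y}$. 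Hence each codim 1 point $z\in Y_y$ maps to a codim 1 point $x_0\in X$, and the functorial image of $\alpha\in\mathcal{G}(\mathcal{O}_{X,x_0})$ provides an extension of $\alpha|_{\op{Frac}(Y_y)}$ inside $\mathcal{G}(\mathcal{O}_{Y_y,z})$; hypothesis (3') then produces $\alpha_y\in\mathcal{G}(\mathcal{O}^{\rm h}_{X,y})$. By finitariness each $\alpha_y$ spreads to a section $\tilde\alpha_y\in\mathcal{G}(U_y)$ on some pointed étale neighborhood $(U_y,u_y)\to(X,y)$ with trivial residue extension; Nisnevich quasi-compactness of the Noetherian $X$ reduces to finitely many $U_y$ forming a Nisnevich cover. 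Pairwise compatibility on fiber products follows from (2) at generic points (all $\tilde\alpha_y$ restrict to $\alpha$ in $\mathcal{G}(K)$), so by the Nisnevich sheaf property the $\tilde\alpha_y$ glue to $\beta\in\mathcal{G}(X)$ with $\beta|_K=\alpha$.

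For (3) $\Rightarrow$ (3'): let $Y=\Spec\mathcal{O}^{\rm h}_{X,x}$ and $\alpha\in\bigcap_{z\in Y^{(1)}}\mathcal{G}(\mathcal{O}_{Y,z})$. Finitariness descends $\alpha$ to $\tilde\alpha\in\mathcal{G}(V)$ on a dense open $V$ of an irreducible étale neighborhood $U$ of $(X,x)$. The complement $U\setminus V$ has only finitely many codim 1 points $u_1,\dots,u_n$, and after removing those whose closure omits $u$ I may assume each $u_i$ specializes to $u$. For each $u_i$, corresponding to a height 1 prime $\mathfrak{p}_i\subset\mathcal{O}_{U,u}$, the codim 1 points of $Y$ above $u_i$ correspond to the finitely many minimal primes of $\mathfrak{p}_i\mathcal{O}^{\rm h}_{X,x}$; by finitariness I pass to an étale refinement $(U',u')\to(U,u)$ in which each such minimal prime is already realized by a codim 1 point $u''_{i,j}$ of $U'$ specializing to $u'$, with $\tilde\alpha\in\mathcal{G}(\mathcal{O}_{U',u''_{i,j}})$. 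Further shrinking $U'$ to discard any codim 1 component of $U'\setminus(V\times_U U')$ not specializing to $u'$, the codim 1 points of $U'$ where $\tilde\alpha$ is not a priori defined are exactly the $u''_{i,j}$. Condition (3) applied to $U'$ then yields $\tilde\alpha\in\mathcal{G}(U')$, whose restriction along $Y\to U'$ gives the desired extension of $\alpha$.

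The main obstacle lies in the (3)$\Rightarrow$(3') direction: a height 1 prime of the étale-local ring $\mathcal{O}_{U,u_i}$ can split nontrivially upon henselization, and a naive attempt to descend an extension from a fine étale refinement $U'$ back to $U$ via a single Nisnevich distinguished square fails in this split case. Circumventing this requires the iterated refinement-and-shrinkage procedure above, which produces a single étale neighborhood $U'$ on which all the needed codim 1 extensions are simultaneously visible, reducing the problem to a direct application of (3).
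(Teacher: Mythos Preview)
Your argument for $(3')\Rightarrow(3)$ follows the same strategy as the paper: for each point $y\in X$ use $(3')$ to produce $\alpha_y\in\mathcal{G}(\mathcal{O}^{\rm h}_{X,y})$ whose restriction to the generic point agrees with $\alpha$, then invoke the Nisnevich sheaf property. The paper phrases the gluing step via the Godement description of sections, while you spread each $\alpha_y$ to an \'etale neighbourhood, pass to a finite Nisnevich cover, and check agreement on overlaps using (1) and (2). These are equivalent arguments; your version is slightly more explicit (and one should note, as you implicitly do, that the spread-out $\tilde\alpha_y$ can be chosen so that $\tilde\alpha_y|_{K(U_y)}$ already equals $\alpha|_{K(U_y)}$, possibly after a further refinement).

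The real difference is in $(3)\Rightarrow(3')$. The paper dismisses this direction as ``clearly satisfied'', which amounts to invoking the standard fact (encoded in Morel's correspondence between unramified sheaves and unramified $\mathcal{F}_F$-data, \cite{MField}*{Theorem~2.11}) that condition (3) for smooth $F$-schemes automatically propagates to essentially smooth ones. You instead give a direct finitary descent argument: spread $\alpha$ to an open of an \'etale neighbourhood $U$, pass to a refinement $U'$ in which the finitely many height-one primes of $\mathcal{O}^{\rm h}_{X,x}$ over each bad point are already separated and the extensions of $\tilde\alpha$ over them are already visible, then apply (3) to $U'$. Your worry about splitting of height-one primes under henselization is well-founded and is exactly why a naive single-step descent fails; your iterated refinement handles it correctly, since the number of primes over each $\mathfrak{p}_i$ eventually stabilizes at the (finite) number in the henselization, and a common refinement for the finitely many conditions exists. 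This direct argument avoids appealing to Morel's machinery and makes the lemma self-contained, at the cost of some bookkeeping; the paper's one-line dismissal is cleaner but presupposes that the reader knows $(3)$ extends to ${\rm EssSm}_F$.
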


\begin{proof} If $\mathcal{G}$ is unramified, then $(3')$ is clearly satisfied. We now assume $(3')$. Let $X \in {\rm Sm}_F$ be irreducible and consider the injective map:
\[
\mathcal{G}(X) \to \bigcap_{x\in X^{(1)}} \mathcal{G}(\mathcal{O}_{X,x}),
\]
appearing in condition (3) of Definition~\ref{def:unramified}. We need to prove that this map is surjective. Noting that $\mathcal{G}$ is a Nisnevich sheaf it suffices to produce compatible sections of $\mathcal{G}(\mathcal{O}^{\rm h}_{X,y})$ as $y$ ranges across all points of $y \in X$. 

To this end, fix $y \in X$ and let $Y =  \Spec \mathcal{O}^{\rm h}_{X,y}$. An element $\alpha$ of $\bigcap_{x\in X^{(1)}} \mathcal{G}(\mathcal{O}_{X,x})$ is a section of $\mathcal{G}$ over the function field $K$ of $X$ which extends over all codimension $1$ points of $X$. Now, for every $\mathfrak{p} \in (\Spec \mathcal{O}^{\rm h}_{X,y})^{(1)}$, there exists an $\mathfrak{q} \in (\Spec \mathcal{O}_{X,y})^{(1)}$ such that the following diagram of local rings and local homomorphisms commutes:
  \[
  \xymatrix{
  \mathcal{O}_{X,y} \ \ar[r] \ar[d] & \mathcal{O}^{\rm h}_{X,y} \ \ar[d] \\
   (\mathcal{O}_{X,y})_{\mathfrak{q}}  \ar[r] & (\mathcal{O}^{\rm h}_{X,y})_{\mathfrak{p}}.
  }
  \]
  Since codimension one points of $\Spec \mathcal{O}_{X,x}$ are just codimension one points of $X$ in the closure of $x$, we obtain a section of $\mathcal{G}((\mathcal{O}^{\rm h}_{Y,y})_{\mathfrak{p}})$ from $\alpha$. By construction, the restriction of this section to $\op{Frac}(\mathcal{O}^{\rm h}_{Y,y})$ agrees with the restriction of $\alpha$ along the inclusion $K\hookrightarrow \op{Frac}(\mathcal{O}^{\rm h}_{Y,y})$. Varying $\mathfrak{p}\in Y$, this defines a section of $ \bigcap_{\mathfrak{p} \in Y^{(1)}}\mathcal{G}(\mathcal{O}_{Y,\mathfrak{p}})$ and thus a section of $\mathcal{G}(Y)$ by (3') whose restriction to $\op{Frac}(\mathcal{O}_{X,y}^{\rm h})$ agrees with the restriction of $\alpha$ along the inclusion $K\hookrightarrow \op{Frac}(\mathcal{O}^{\rm h}_{Y,y})$. This concludes the proof.
\end{proof}

The resolution of the Grothendieck-Serre conjecture, due to Fedorov and Panin \cite{fedorov:panin}, lets us reformulate local henselian purity in terms of unramifiedness.

\begin{lemma}
  \label{lem:purity-unramified}
  Let $F$ be a field, and let $G$ be a reductive group over $F$. Purity for henselizations $R=\mathcal{O}_{X,x}^{\rm h}$ of local rings of smooth $F$-schemes  is equivalent to unramifiedness of $\mathcal{H}^1_{\et}(G)$. 
\end{lemma}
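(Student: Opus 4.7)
The plan is to invoke Lemma~\ref{lem:unram-gs} to reduce unramifiedness to the local condition (3'), then interpret both (3') and purity as the same statement about subsets of $\mathrm{H}^1_{\et}(\Spec K, G)$ for $K = \op{Frac}(R)$, using the Grothendieck--Serre conjecture of \cite{fedorov:panin} to make the identifications.

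First, I would verify the hypotheses of Lemma~\ref{lem:unram-gs}. The sheaf $\mathcal{H}^1_{\et}(G)$ is by construction Nisnevich; condition (1) of Definition~\ref{def:unramified} is automatic since disjoint unions are Nisnevich covers, and condition (2) follows from Proposition~\ref{prop:lnis-unramified-1} applied to the presheaf $U\mapsto{\rm H}^1_{\et}(U,G)$, whose strong Grothendieck--Serre property is exactly the theorem of Fedorov--Panin. It therefore suffices to treat condition (3') for $Y=\Spec R$ with $R=\mathcal{O}_{X,x}^{\rm h}$ and $X\in{\rm Sm}_F$.

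Next, the core step is to make the following two identifications, both compatible with restriction to $\Spec K$:
\[
\mathcal{H}^1_{\et}(G)(Y)=\mathrm{H}^1_{\et}(Y,G),\qquad \mathcal{H}^1_{\et}(G)(\mathcal{O}_{Y,y})=\mathrm{H}^1_{\et}(\mathcal{O}_{Y,y},G)\quad\text{for }y\in Y^{(1)}.
\]
The first holds because $Y$ is henselian local, so its Nisnevich stalk recovers $\mathrm{H}^1_{\et}(R,G)$. For the second, one notes that $R$ is a filtered colimit of local rings of Nisnevich neighbourhoods of $(X,x)$, hence essentially smooth over $F$, and localisation at a height one prime $y$ preserves this. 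Therefore $\mathcal{O}_{Y,y}$ is an essentially smooth DVR over $F$, and Proposition~\ref{prop:lnis-dvr} provides a surjection $\mathrm{H}^1_{\et}(\mathcal{O}_{Y,y},G)\twoheadrightarrow \mathcal{H}^1_{\et}(G)(\mathcal{O}_{Y,y})$. Both sets inject into $\mathrm{H}^1_{\et}(\Spec K,G)$ (the left one by Fedorov--Panin and the right one by Proposition~\ref{prop:lnis-unramified-1}), and since these injections fit in a commutative triangle, the surjection is forced to be a bijection.

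Under these identifications, the injection in condition (3') becomes precisely the inclusion
\[
\op{im}\!\left(\mathrm{H}^1_{\et}(R,G)\to\mathrm{H}^1_{\et}(\Spec K,G)\right)\hookrightarrow\bigcap_{y\in Y^{(1)}}\op{im}\!\left(\mathrm{H}^1_{\et}(\mathcal{O}_{Y,y},G)\to\mathrm{H}^1_{\et}(\Spec K,G)\right),
\]
and its bijectivity is by definition purity for the henselian local ring $R$. This proves both implications simultaneously. The main obstacle in the argument is the identification of $\mathcal{H}^1_{\et}(G)(\mathcal{O}_{Y,y})$ with $\mathrm{H}^1_{\et}(\mathcal{O}_{Y,y},G)$ for the localisations of $R$ at codimension one points; once one verifies that such $\mathcal{O}_{Y,y}$ remains essentially smooth over $F$, the combination of Proposition~\ref{prop:lnis-dvr} with the Grothendieck--Serre conjecture does the rest.
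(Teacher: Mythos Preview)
Your proof is correct and follows essentially the same route as the paper's: reduce unramifiedness to condition (3') via Lemma~\ref{lem:unram-gs}, identify $\mathcal{H}^1_{\et}(G)(\mathcal{O}_{Y,y})$ with the image of ${\rm H}^1_{\et}(\mathcal{O}_{Y,y},G)$ in ${\rm H}^1_{\et}(\Spec K,G)$ using Proposition~\ref{prop:lnis-dvr} together with Grothendieck--Serre, and then recognise the resulting inclusion as the purity map. The paper packages the comparison as a commutative square rather than as a direct identification of subsets, but the ingredients and logic are the same.
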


\begin{proof}
  We first note that for a discrete valuation ring $R$ with fraction field $K$ the sheafification map induces a bijection
  \[
  \op{im}\left({\rm H}^1_{\et}(\Spec R, G)\to {\rm H}^1_{\et}(\Spec K,G)\right)\xrightarrow{\cong} \mathcal{H}^1_{\et}(G)(\Spec R).
  \]
  The injectivity follows from the strong Grothendieck--Serre property \cite{fedorov:panin} and the surjectivity follows from Proposition~\ref{prop:lnis-dvr}. As a consequence, for an irreducible essentially smooth $F$-scheme $X$ with function field $K$, we get an induced equality of subsets of ${\rm H}^1_{\et}(\Spec K,G)$:
  \[
  \bigcap_{x\in X^{(1)}}\op{im}\left({\rm H}^1_{\et}(\Spec \mathcal{O}_{X,x},G)\to {\rm H}^1_{\et}(\Spec K,G)\right) = \bigcap_{x\in X^{(1)}}\mathcal{H}^1_{\et}(G)(\mathcal{O}_{X,x})
  \]
  Now let $R=\mathcal{O}_{X,x}^{\rm h}$ be the henselization of an irreducible smooth $F$-scheme $X$ at a point $x\in X$. Denote $Y=\Spec R$ and $K={\rm Frac}(R)$ and consider the diagram
  \[
  \xymatrix{
    {\rm H}^1_{\et}(Y,G) \ar[r] \ar[d] & \bigcap_{y\in Y^{(1)}}\op{im}\left({\rm H}^1_{\et}(\Spec \mathcal{O}_{Y,y},G)\to {\rm H}^1_{\et}(\Spec K,G)\right) \ar[d] \\
    \mathcal{H}^1_{\et}(G)(Y) \ar[r] & \bigcap_{y\in Y^{(1)}}\mathcal{H}^1_{\et}(G)(\mathcal{O}_{Y,y})
  }
  \]
  The diagram is commutative since the vertical sheafification maps commute with the horizontal restrictions to ${\rm H}^1_{\et}(\Spec K,G)\cong\mathcal{H}^1_{\et}(G)(\Spec K)$. The right vertical arrow is a bijection by what we noted at the beginning of the proof, and the left vertical arrow is a bijection by the assumptions on $R$. Therefore, the upper horizontal map is a bijection if and only if the lower horizontal map is. But (by definition) the upper horizontal arrow is a surjection if and only if purity holds.

 Now, Proposition~\ref{prop:lnis-unramified-1} ensures that $\mathcal{H}^1_{\et}(G)(X) \rightarrow \mathcal{H}^1_{\et}(G)(K)$ is injective. Lemma~\ref{lem:unram-gs} implies that $\mathcal{H}^1_{\et}(G)$ is unramified if and only if the lower horizontal map is an isomorphism.
\end{proof}



\begin{remark}
  Purity for local rings $\mathcal{O}_{X,x}$ of smooth $F$-schemes would follow from the above if the Zariski sheafification of the presheaf $U\mapsto {\rm H}^1_{\et}(U,G)$ agrees with the Nisnevich sheafification $\mathcal{H}^1_{\et}(G)$. Conversely, if purity holds for local rings, then the Zariski sheafification agrees with $\mathcal{H}^1_{\et}(G)$ (and is unramified).
\end{remark}

Now at this point, we cannot prove that $\mathcal{H}^1_{\et}(G)$ is unramified in general. However, we would like to point out a few facts that strongly suggest Nisnevich local purity should be true: there is a morphism $\mathcal{H}^1_{\et}(G)\to \mathcal{T}$ to an unramified and homotopy invariant Nisnevich sheaf $\mathcal{T}$ which induces bijections over essentially smooth $F$-schemes of dimension $\leq 1$. In the presence of group structures on the sheaves, this would be enough to imply that the morphism is an isomorphism, but since we only have sheaves of pointed sets, we don't know how to prove the surjectivity.

For the construction of the unramified sheaf $\mathcal{T}$, it is clear that the sections over a smooth $F$-scheme have to be the unramified sections of $\mathcal{H}^1_{\et}(G)$:
\[
\mathcal{T}(X):=\bigcap_{x\in X^{(1)}} \mathcal{H}^1_{\et}(G)(\Spec \mathcal{O}_{X,x})\subseteq \mathcal{H}^1_{\et}(G)(\Spec K).
\]
For the presheaf structure, i.e., the existence of suitable restriction maps, we have to take a little detour. 

\begin{remark}
Applying the above definition to an arbitrary presheaf $\mathcal{F}$ via
\[
\mathcal{F}_{\rm nr}(X):=\bigcap_{x\in X^{(1)}} \mathcal{F}(\mathcal{O}_{X,x})\subseteq \mathcal{F}(K).
\]
doesn't generally produce a presheaf. The immediate problem is with the definition of restriction maps for non-smooth morphisms of smooth schemes. Conditions for the functoriality of the unramified sections were already discussed in \cite{colliot-thelene:sansuc}*{Section 6}. We will discuss another approach to the functoriality of unramified sections which makes use of purity in codimension 2 and the theory of unramified sheaves of \cite{MField}*{Section 2}. 
\end{remark}

\begin{proposition}
  \label{prop:unramified-datum}
  Let $F$ be a field and let $\mathcal{H}$ be a finitary Nisnevich sheaf on ${\rm Sm}_F$ having the strong Grothendieck--Serre property. Assume that $\mathcal{H}$ satisfies purity in dimension 2, i.e., for any irreducible essentially smooth $F$-scheme $X$ of dimension $2$ with function field $K$, we have an identification
  \[
  \mathcal{H}(X)=\bigcap_{x\in X^{(1)}}\mathcal{H}(\Spec\mathcal{O}_{X,x})
  \]
  of subsets in $\mathcal{H}(\Spec K)$.

  Then the following defines an unramified $\mathcal{F}_F$-datum in the sense of \cite{MField}*{Definition 2.9}.
  \begin{enumerate}[(D1)]
  \item We define a continuous contravariant functor $\mathcal{S}$ on the category $\mathcal{F}_F$ of separable field extensions of $F$ by $\mathcal{S}(L/F):=\mathcal{H}(L/F)$. 
  \item For any separable field extension $L/F$ and any discrete valuation $v$ on $L$ with valuation ring $\mathcal{O}_v$, we define $\mathcal{S}(\mathcal{O}_v):=\mathcal{H}(\Spec \mathcal{O}_v)\subseteq \mathcal{H}(\Spec L)=\mathcal{S}(L/F)$.
  \item For any separable field extension $L/F$ and any discrete valuation $v$ on $L$ with valuation ring $\mathcal{O}_v$ and residue field $\kappa(v)$, we define the specialization map $s\colon \mathcal{S}(\mathcal{O}_v)\to \mathcal{S}(\kappa(v))$ to be the restriction map $\mathcal{H}(\Spec \mathcal{O}_v)\to\mathcal{H}(\Spec \kappa(v))$. 
  \end{enumerate}
\end{proposition}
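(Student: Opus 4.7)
The plan is to verify the axioms of an unramified $\mathcal{F}_F$-datum in \cite{MField}*{Definition 2.9} for the triple $(\mathcal{S}, \mathcal{S}(\mathcal{O}_v), s)$ defined in (D1)--(D3). Since all of the structure is extracted from the presheaf $\mathcal{H}$ via restriction, most of the verification consists in unfolding definitions and invoking functoriality of $\mathcal{H}$; the nontrivial inputs are the strong Grothendieck--Serre property and the purity-in-dimension-$2$ hypothesis.

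First I would check that $\mathcal{S}$ gives a continuous contravariant functor on $\mathcal{F}_F$. Each separable extension $L/F$ is a filtered colimit of its smooth finite-type $F$-subalgebras, and since $\mathcal{H}$ is finitary, $\mathcal{S}(L)=\mathcal{H}(\Spec L)$ is the corresponding filtered colimit, with functoriality inherited from $\mathcal{H}$. The well-definedness of $\mathcal{S}(\mathcal{O}_v)\subseteq\mathcal{S}(L)$ uses the strong Grothendieck--Serre property, which makes the restriction $\mathcal{H}(\Spec\mathcal{O}_v)\to\mathcal{H}(\Spec L)$ injective; the specialization map $s$ is just the restriction $\mathcal{H}(\Spec\mathcal{O}_v)\to\mathcal{H}(\Spec\kappa(v))$ coming from the presheaf structure.

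Next I would verify the lower-dimensional compatibility conditions. The axiom requiring that an extension $L\subset E$ with valuation $w$ on $E$ lying over $v$ on $L$ induces a map $\mathcal{S}(\mathcal{O}_v)\to\mathcal{S}(\mathcal{O}_w)$ commuting with specialization follows from the inclusion $\mathcal{O}_v\hookrightarrow\mathcal{O}_w$ inducing compatible restriction maps of $\mathcal{H}$. The condition expressing sections of $\mathcal{S}$ over an essentially smooth curve as the intersection of sections at codimension $1$ stalks follows from $\mathcal{H}$ being a Nisnevich sheaf combined with Proposition~\ref{prop:lnis-unramified-1} and Proposition~\ref{prop:lnis-dim1}, which supply the relevant injectivity and surjectivity statements.

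The main obstacle, and the reason for the dimension-$2$ purity hypothesis, is the highest-dimensional axiom. Informally, it says that for a regular essentially smooth local ring $R=\mathcal{O}_{X,x}$ of dimension $2$ with fraction field $K$, any element of $\mathcal{S}(K)$ lying in $\mathcal{S}(\mathcal{O}_{X,y})$ for every codimension $1$ point $y$ comes from a section over $R$, whose restrictions to codimension $2$ residue fields are then forced to satisfy the required coincidence of iterated specializations. This is exactly what the purity hypothesis provides: the identification $\mathcal{H}(X)=\bigcap_{y\in X^{(1)}}\mathcal{H}(\Spec\mathcal{O}_{X,y})$ produces the common lift, and compatibility of the resulting specializations is then a formal consequence of the functoriality of $\mathcal{H}$. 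With this in hand the axioms (A1)--(A4) of Morel's unramified datum are all verified, completing the proof.
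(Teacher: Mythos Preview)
Your overall strategy matches the paper's: verify Morel's axioms (A1)--(A4) one by one, using the finitary and Nisnevich-sheaf hypotheses for the low axioms and the purity-in-dimension-$2$ hypothesis for (A4). Your treatment of continuity, of (A1) (the dvr-extension compatibility), of (A3) (functoriality of specialization), and of (A4) is essentially what the paper does.

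There is, however, a genuine gap in your handling of (A2). Morel's axiom (A2) is a \emph{finiteness} statement: for an irreducible smooth $X$ with function field $K$ and any $\sigma\in\mathcal{S}(K)$, one must show that $\sigma$ lies in $\mathcal{S}(\mathcal{O}_{X,x})$ for all but finitely many $x\in X^{(1)}$. The paper proves this directly from finitariness: the section $\sigma$ extends to some open $U\subseteq X$, and then only the finitely many codimension-one points in $X\setminus U$ can be bad. What you describe instead --- ``sections of $\mathcal{S}$ over an essentially smooth curve as the intersection of sections at codimension $1$ stalks'' --- is not an axiom of the unramified datum; it is rather a consequence one would draw \emph{after} knowing the datum is unramified. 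So as written you have not verified (A2).

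Relatedly, your appeal to Proposition~\ref{prop:lnis-unramified-1} and Proposition~\ref{prop:lnis-dim1} is misplaced: those results compare a presheaf with its Nisnevich sheafification, but here $\mathcal{H}$ is already assumed to be a Nisnevich sheaf, so the sheafification map is the identity and those propositions give no content. The actual inputs needed are just the strong Grothendieck--Serre property (for the injectivity $\mathcal{H}(\Spec\mathcal{O}_v)\hookrightarrow\mathcal{H}(\Spec L)$) and finitariness (for (A2)).
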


\begin{proof}
  We first prove that $\mathcal{S}$ is an unramified $\mathcal{F}_F$-datum, i.e., we need to check the axioms (A1)-(A4) from \cite{MField}*{Definition 2.6, 2.9}. We first note that the continuity of the functor $\mathcal{S}$ follows from the assumption that $\mathcal{H}$ is finitary. 
  
  Axiom (A1) is essentially a Nisnevich descent statement for extensions of dvrs, and it holds for $\mathcal{S}$ since $\mathcal{H}$ is a Nisnevich sheaf. In particular, the required map $\mathcal{S}(\mathcal{O}_w)\to\mathcal{S}(\mathcal{O}_v)$ is the restriction map for $\mathcal{H}$, and the sheaf property implies that the relevant square is cartesian.

  Axiom (A2) follows from the assumption that $\mathcal{H}$ is finitary. If $X$ is an irreducible smooth $F$-scheme with function field $K$, any section $\sigma\in \mathcal{S}(K)$ is already defined over some open $U\subseteq X$. Therefore, there are only finitely many $x\in X^{(1)}$ (the ones in the complement of $U$) such that $\sigma\not\in \mathcal{S}(\mathcal{O}_{X,x})$.

  Both parts of Axiom (A3) follow from the fact that $\mathcal{H}$ is a presheaf on the category of smooth schemes: $\mathcal{S}$ is defined in such a way that the commutativity of the diagram in part (i) resp. the claim about the induced maps in part (ii) follow from the identification of $\mathcal{S}$ in terms of $\mathcal{H}$ and then applying the presheaf $\mathcal{H}$ to an appropriate commutative diagram in the category of essentially smooth $F$-schemes.

  The non-trivial input is now in the proof of Axiom (A4). Let $X$ be any essentially smooth local $F$-scheme of dimension $2$ with closed point $z$ and function field $K$.  For part (i), assume that $y_0\in X^{(1)}$ is a codimension one point with essentially smooth closure. We need to show that the specialization map $\mathcal{S}(\mathcal{O}_{y_0})\to\mathcal{S}(\kappa(y_0))$ maps $\bigcap_{y\in X^{(1)}}\mathcal{S}(\mathcal{O}_y)$ into $\mathcal{S}(\mathcal{O}_{\overline{y_0},z})$. By construction, the specialization map is identified with the restriction map $\mathcal{H}(\mathcal{O}_{y_0})\to \mathcal{H}(\kappa(y_0))$, and 
  \[
  \bigcap_{y\in X^{(1)}}\mathcal{S}(\mathcal{O}_y)=\bigcap_{y\in X^{(1)}}\mathcal{H}(\mathcal{O}_y)\subset\mathcal{H}(K).
  \]
  By the assumption on $\mathcal{H}$, this set is identified with $\mathcal{H}(X)$. In particular, the specialization map will send the set $\mathcal{H}(X)\subseteq \mathcal{H}(\mathcal{O}_{y_0})$ into $\mathcal{H}(\mathcal{O}_{\overline{y_0},z})$ which proves part (i) of (A4). For part (ii), we note that the composition
  \[
  \bigcap_{y\in X^{(1)}}\mathcal{H}(\Spec \mathcal{O}_y)\to \mathcal{H}(\Spec \mathcal{O}_{\overline{y_0},z})\to\mathcal{H}(\Spec \kappa(z))
  \]
  is (by what we said for part (i)) simply the restriction $\mathcal{H}(X)\to \mathcal{H}(\Spec \kappa(z))$ and therefore independent of the choice of $y_0$.

\end{proof}

\begin{proposition}
  \label{prop:unramification}
In the situation of Proposition~\ref{prop:unramified-datum}, we have an unramified Nisnevich sheaf  $\mathcal{H}_{\rm nr}$   corresponding to $\mathcal{S}$. The restriction of sections from irreducible schemes to their function fields defines an injective morphism of sheaves $\mathcal{H}\to \mathcal{H}_{\rm nr}$ which induces bijections on essentially smooth $F$-schemes of dimension $\leq 1$.  The sheaf $\mathcal{H}_{\rm nr}$ has the strong Grothendieck--Serre property. If $\mathcal{H}$ is homotopy invariant then so is $\mathcal{H}_{\rm nr}$.
\end{proposition}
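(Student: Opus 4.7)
The plan is to apply Morel's dictionary between unramified $\mathcal{F}_F$-data and unramified Nisnevich sheaves from \cite{MField}*{Section 2} to the datum $\mathcal{S}$ produced in Proposition~\ref{prop:unramified-datum}. The resulting sheaf $\mathcal{H}_{\rm nr}$ is then characterized on an irreducible essentially smooth $F$-scheme $X$ with function field $K$ by
\[
\mathcal{H}_{\rm nr}(X) := \bigcap_{x \in X^{(1)}} \mathcal{H}(\Spec\mathcal{O}_{X,x}) \;\subseteq\; \mathcal{H}(\Spec K),
\]
and extends to reducible schemes via the product rule of Definition~\ref{def:unramified}(1). Functoriality along general (not just smooth) morphisms as well as the Nisnevich sheaf property are encoded in the axioms (A1)--(A4) already verified in Proposition~\ref{prop:unramified-datum}, so nothing new has to be proved on that front.

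The comparison morphism $\mathcal{H}\to\mathcal{H}_{\rm nr}$ sends $\sigma\in\mathcal{H}(X)$, for irreducible $X$, to its restriction to the generic point, which lies in every $\mathcal{H}(\Spec\mathcal{O}_{X,x})$ by functoriality of $\mathcal{H}$. The composite $\mathcal{H}(X)\to\mathcal{H}_{\rm nr}(X)\hookrightarrow\mathcal{H}(\Spec K)$ is injective by the argument of Proposition~\ref{prop:lnis-unramified-1} applied to the already-sheafified $\mathcal{H}$, so the comparison morphism itself is injective. For bijectivity on essentially smooth schemes of dimension $\leq 1$, I plan to repeat the gluing strategy of Proposition~\ref{prop:lnis-dim1}: a section $\alpha\in\mathcal{H}_{\rm nr}(X)$ restricts to a generic section that spreads, by finitariness of $\mathcal{H}$, to some $\sigma_U\in\mathcal{H}(U)$; at each missing point $x\in X\setminus U$, the prescribed $\alpha_x\in\mathcal{H}(\mathcal{O}_{X,x})$ spreads to a Zariski neighbourhood $V_x$, and after possibly shrinking, agreement of $\sigma_U$ and the spread on $U\cap V_x$ is forced by injectivity to the function field. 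Gluing via Zariski descent then yields the required section of $\mathcal{H}(X)$.

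The strong Grothendieck--Serre property for $\mathcal{H}_{\rm nr}$ is tautological, since for any regular local essentially smooth $F$-algebra $R$ with fraction field $K$ the set $\mathcal{H}_{\rm nr}(\Spec R)$ is by definition a subset of $\mathcal{H}(\Spec K)=\mathcal{H}_{\rm nr}(\Spec K)$. For homotopy invariance I feed $\mathcal{H}_{\rm nr}$ into Proposition~\ref{prop:lnis-hoinv}: it is Nisnevich lexcisive (being a sheaf), has strong Grothendieck--Serre (just shown), and satisfies the field-level $\mathbb{A}^1$-invariance hypothesis via
\[
\mathcal{H}_{\rm nr}(\Spec L) = \mathcal{H}(\Spec L) \xrightarrow{\cong} \mathcal{H}(\mathbb{A}^1_L) = \mathcal{H}_{\rm nr}(\mathbb{A}^1_L),
\]
where the outer equalities come from bijectivity in dimension $\leq 1$ and the middle isomorphism is the assumed homotopy invariance of $\mathcal{H}$.

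The step I expect to be the main obstacle is verifying finitariness of $\mathcal{H}_{\rm nr}$ on essentially smooth inputs, which is required in order to invoke Proposition~\ref{prop:lnis-hoinv}: an essentially smooth scheme may fail to be noetherian and can carry codimension-one points not visible on any of the approximating smooth schemes, so the claim that the intersection defining $\mathcal{H}_{\rm nr}$ commutes with cofiltered affine limits has to be traced carefully through Morel's explicit description of unramified sections together with the finitariness of $\mathcal{H}$.
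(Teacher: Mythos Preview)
Your approach is essentially the same as the paper's, and is correct. Two differences in emphasis are worth pointing out.

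First, your worry about finitariness of $\mathcal{H}_{\rm nr}$ is unnecessary: the paper dispatches this in one line by observing that any unramified sheaf in Morel's sense is automatically finitary, since axiom (A2) of the $\mathcal{F}_F$-datum guarantees that every section over a function field extends to some open subscheme (see the discussion preceding \cite{MField}*{Proposition 2.8}). So there is no obstacle there.

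Second, you omit something the paper does check explicitly: that the pointwise maps $\mathcal{H}(X)\to\mathcal{H}_{\rm nr}(X)$ assemble into a \emph{natural transformation}. For smooth morphisms this is automatic (both restriction maps are computed via the induced function field extension), but for closed immersions the restriction maps on $\mathcal{H}_{\rm nr}$ are not those of $\mathcal{H}$; they are built from the specialization maps of the datum $\mathcal{S}$ via Morel's factorization through codimension-one closed immersions \cite{MField}*{Lemma 2.13}. The paper verifies that these agree with the restrictions coming from $\mathcal{H}$, using that the specialization maps in $\mathcal{S}$ were defined to be restrictions in $\mathcal{H}$. Your sentence ``nothing new has to be proved on that front'' conflates functoriality of $\mathcal{H}_{\rm nr}$ (which indeed comes from (A1)--(A4)) with naturality of $\mathcal{H}\to\mathcal{H}_{\rm nr}$, which does require this short check.

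On the other hand, your gluing argument for bijectivity on one-dimensional schemes is more careful than the paper's. The paper writes ``by construction'' and parenthetically identifies dimension $\leq 1$ essentially smooth schemes with fields and dvrs, then later uses agreement over $\mathbb{A}^1_L$ without comment; your Zariski gluing via finitariness and injectivity to the function field fills that in.
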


\begin{proof}
From Proposition~\ref{prop:unramified-datum} we see that $\mathcal{S}$ is an unramified $\mathcal{F}_F$-datum, and by \cite{MField}*{Theorem 2.11} there exists an unramified sheaf $\mathcal{H}_{\rm nr}$ associated to it. More precisely, the restriction of $\mathcal{H}_{\rm nr}$ (given exactly the same way we defined $\mathcal{S}$ from $\mathcal{H}$ in the first place) is isomorphic to $\mathcal{S}$. In particular, this means that for any irreducible essentially smooth $F$-scheme $X$ with function field $K$ we have 
  \[
  \mathcal{H}_{\rm nr}(X)=\bigcap_{x\in X^{(1)}}\mathcal{S}(\mathcal{O}_{X,x})\subseteq\mathcal{S}(K/F).
  \]
  
  We define the morphism $\mathcal{H}\to \mathcal{H}_{\rm nr}$ as follows: for an irreducible essentially smooth $F$-scheme with function field $K$, any section $\sigma\in\mathcal{H}(X)\subseteq\mathcal{H}(\Spec K)$ already lies in
  \[
  \bigcap_{x\in X^{(1)}}\mathcal{H}(\Spec \mathcal{O}_{X,x})\subseteq\mathcal{H}(\Spec K),
  \]
  and we send $\sigma\in \mathcal{H}(X)$ to its image under the identification
  \[
  \bigcap_{x\in X^{(1)}}\mathcal{H}(\Spec \mathcal{O}_{X,x})=\bigcap_{x\in X^{(1)}}\mathcal{S}(\mathcal{O}_{X,x})=\mathcal{H}_{\rm nr}(X)
  \]
  This defines maps $\mathcal{H}(X)\to \mathcal{H}_{\rm nr}(X)$ for any essentially smooth $F$-scheme $X$, and these maps are clearly injective since $\mathcal{H}(X)\subseteq\mathcal{H}(\Spec K)$. By construction, this produces bijections $\mathcal{H}(Y)\xrightarrow{\cong}\mathcal{H}_{\rm nr}(Y)$ for any irreducible essentially smooth $F$-scheme $Y$ of dimension $\leq 1$ (i.e., field extensions of $F$ and dvrs containing $F$). It therefore remains to show that these maps are compatible with the restriction maps for $\mathcal{H}$ resp. $\mathcal{H}_{\rm nr}$.

  The restriction maps of the sheaf $\mathcal{H}_{\rm nr}$ are constructed in the proofs of Proposition 2.8 (for smooth morphisms) and Theorem 2.11 (for closed immersions) of \cite{MField}. The restriction maps for smooth morphisms $X\to Y$ of essentially smooth $F$-schemes are induced by the function field extensions and therefore compatible with the restriction maps on $\mathcal{H}$ since $\mathcal{H}$ and $\mathcal{H}_{\rm nr}$ agree on schemes of dimension $\leq 1$. The definition of the restriction maps for closed immersions  reduces to the codimension 1  case by factoring a closed immersion into a sequence of codimension 1 immersions in \cite{MField}*{Lemma 2.13}. In particular, the uniqueness part of Lemma 2.13 implies that the restriction maps of $\mathcal{H}$ and $\mathcal{H}_{\rm nr}$ are compatible for closed immersion if they are compatible for codimension 1 closed immersions. But the restricition maps of $\mathcal{H}_{\rm nr}$ for codimension 1 closed immersions are defined in terms of the specialization maps for $\mathcal{S}$, hence are compatible with the restriction maps for $\mathcal{H}$. This concludes the proof.

  The strong Grothendieck--Serre property for $\mathcal{H}_{\rm nr}$ is then built in by construction. Unramified sheaves are also finitary (since the associated $\mathcal{F}_F$-datum satisfies the axiom (A2), cf. the discussion before \cite{MField}*{Proposition 2.8}. Assume that $\mathcal{H}$ satisfies homotopy invariance. Since $\mathcal{H}$ and $\mathcal{H}_{\rm nr}$ agree over one-dimensional schemes, $\mathcal{H}_{\rm nr}$ will then satisfy the $\mathbb{A}^1$-invariance over fields. Homotopy invariance for $\mathcal{H}_{\rm nr}$ follows from Proposition~\ref{prop:lnis-hoinv}. 
\end{proof}

\begin{remark}
  The result applies to $\mathcal{H}^1_{\et}(G)$ for a reductive group $G$ which is a finitary Nisnevich sheaf by construction and satisfies 2-dimensional purity by \cite{colliot-thelene:sansuc}*{Theorem 6.13, Corollary 6.14}. It seems very likely that the morphism $\mathcal{H}^1_{\et}(G)\to\mathcal{T}$ should be an isomorphism whenever the group $G$ satisfies $\mathbb{A}^1$-invariance over fields, but at this point we're unable to prove this.
\end{remark}

\begin{example}
  \label{ex:a1-rigid}
  There exist finitary homotopy-invariant Nisnevich sheaves of pointed sets which have the strong Grothendieck--Serre property but fail to be unramified (caused by a failure of unramifiedness in codimension 2). Consider for example the variety $X=(C_1\times C_2)\setminus \{p\}$ for $C_1$ and $C_2$ smooth projective curves of genus $>0$ and $p$ a closed point of the product $C_1\times C_2$. This variety is $\mathbb{A}^1$-rigid. The presheaf $\mathcal{X}$ represented by $X$ is then a finitary homotopy-invariant Nisnevich sheaf. The strong Grothendieck--Serre property is also satisfied. For a regular local ring $R$, the images of any two morphisms $\Spec R\to X$ lie in a common open affine  $\Spec A=U\subseteq X$, and two ring homomorphisms $A\to R$ agree if their compositions with $R\subseteq {\rm Frac}(R)$ agree. If the sheaf $\mathcal{X}$ were unramified, then for any regular local ring $R$ with fraction field $K$ a map $\Spec K \to X$ extends to $R$ if it extends to all codimension 1 points of $\Spec R$. However, by definition this fails for the regular local ring $\mathcal{O}_{C_1\times C_2,P}$ and the map $\Spec {\rm Frac}(\mathcal{O}_{C_1\times C_2,P})\to X$ induced from the obvious inclusion $\Spec \mathcal{O}_{C_1\times C_2,P}\hookrightarrow C_1\times C_2$. Similar arguments apply generally to open subsets of $\mathbb{A}^1$-rigid smooth projective varieties whose complement has codimension $\geq 2$.
\end{example}

\section{Connected components of classifying spaces}
\label{sec:components}

We now want to relate the sheaf $\mathcal{H}^1_{\et}(G)$ to the sheaf $a_{\rm Nis}\pi_0^{\mathbb{A}^1}{\rm B}_{\et}G$ of $\mathbb{A}^1$-connected components of the classifying space ${\rm B}_{\et}G$ for a reductive group $G$. More precisely, we will show that the natural morphism of sheaves in the following construction is an isomorphism.

\begin{construction}\label{cons:can} We construct a transformation of Nisnevich sheaves of pointed sets
\[
{\rm can}_G:\mathcal{H}^1_{\et}(-,G) \rightarrow a_{\rm Nis}\pi_0^{\mathbb{A}^1}({\rm B}_{\et}G).
\]
To begin with, consider the stack ${\rm B}_{\et}G$ classifying $G$-torsors. We have a morphism of presheaves ${\rm B}_{\et}G|_{\rm Sm_k} \rightarrow L_{\rm mot}({\rm B}_{\et}G|_{\rm Sm_k})$, which induces a map by taking $\pi_0$:
\[
{\rm H}^1_{\et}(-,G) \rightarrow \pi_0^{\mathbb{A}^1}({\rm B}_{\et}G).
\]
This is a transformation of presheaves of pointed sets, which then Nisnevich-sheafifies to the map ${\rm can}_G$ as above. 
\end{construction}

\begin{remark}\label{rem:torsor} Let $X \in {\rm Sm}_k$, the map ${\rm H}^1_{\et}(X,G) \rightarrow \pi_0^{\mathbb{A}^1}({\rm B}_{\et}G)(X)$ can be interpreted as follows: a $G$-torsor $\mathcal{P} \rightarrow X$ is classified by a map $X \rightarrow {\rm B}_{\et}G|_{\rm Sm_k}$ (here $X$ is abusively identified with its Yoneda image) which is sent to the motivic-localization 
\[
(L_{\rm mot}X \rightarrow L_{\rm mot}{\rm B}_{\et}G|_{\rm Sm_k}) \in \pi_0^{\mathbb{A}^1}({\rm B}_{\et}G)(X) = [X, {\rm B}_{\et}G]_{\mathbb{A}^1}.
\] 
In \cite{morel:voevodsky}*{Section 4.2}, the motivic localization of the presheaf ${\rm B}_{\et}G|_{\rm Sm_k}$ admits a geometric model (as an ind-scheme) which goes back to the work of Morel--Voevodsky and Totaro. Roughly there exists an ind-scheme $U_{\infty}$ with a $G$-action such that the quotient $(U_{\infty}/G)$ exists as an ind-scheme and we have an equivalence of presheaves:
\[
L_{\rm mot}B_{\et}G|_{\rm Sm_k} \simeq L_{\rm mot}(U_{\infty}/G)
\]
We refer to \cite{hoyois-cdh}*{Theorem 2.7} for a statement in this form, though we will not need it in the sequel.
\end{remark}

%

We now begin to examine the map ${\rm can}_G:\mathcal{H}^1_{\et}(-,G) \rightarrow a_{\rm Nis}\pi_0^{\mathbb{A}^1}({\rm B}_{\et}G)$. 


\begin{lemma}\label{lem:curves} Let $\mathcal{X}:{\rm Sm}^{\rm op}_k \rightarrow {\rm Spc}$ be an $\mathbb{A}^1$-invariant presheaf satisfying the following assumption:
\begin{itemize}
\item[(${\rm Surj}_{\leq1}$)] for any smooth, irreducible $k$-scheme $C$ of dimension $\leq 1$, the map
\[
\pi_0(\mathcal{X}(C)) \rightarrow \pi_0(L_{\rm Nis}\mathcal{X}(C)),
\]
is surjective.
\end{itemize}
Then for any finitely generated field extension $L$ of $k$, the map
\[
\pi_0(L_{\rm Nis}\mathcal{X}(L)) \rightarrow \pi_0({\rm Sing}L_{\rm Nis}\mathcal{X}(L))
\]
is injective.
\end{lemma}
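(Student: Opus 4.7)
I would begin by unpacking the target. By the formula from Section~\ref{sec:prelims},
\[
\pi_0({\rm Sing}\,L_{\rm Nis}\mathcal{X}(L)) = \op{coeq}\bigl(\pi_0(L_{\rm Nis}\mathcal{X}(\mathbb{A}^1_L)) \rightrightarrows \pi_0(L_{\rm Nis}\mathcal{X}(L))\bigr)
\]
for the two evaluation maps at $t=0$ and $t=1$. A standard zigzag/transitivity argument then reduces the injectivity of the map in the statement to showing: for every $h \in \pi_0(L_{\rm Nis}\mathcal{X}(\mathbb{A}^1_L))$, we have $0^\ast h = 1^\ast h$ in $\pi_0(L_{\rm Nis}\mathcal{X}(L))$.

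Next, since $L$ is a field (hence Nisnevich-local) the canonical map $\pi_0\mathcal{X}(L) \to \pi_0(L_{\rm Nis}\mathcal{X}(L))$ is a bijection, and by the $\mathbb{A}^1$-invariance of $\mathcal{X}$ (extended to essentially smooth $k$-schemes via left Kan extension), the projection pullback $\op{pr}^\ast \colon \pi_0\mathcal{X}(L) \to \pi_0\mathcal{X}(\mathbb{A}^1_L)$ is a bijection with both $0^\ast$ and $1^\ast$ as two-sided inverses. In particular $0^\ast = 1^\ast$ as maps $\pi_0\mathcal{X}(\mathbb{A}^1_L) \to \pi_0\mathcal{X}(L)$. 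Hence, if $h$ admits a lift $\widetilde h \in \pi_0\mathcal{X}(\mathbb{A}^1_L)$ along the sheafification map, then $0^\ast h = 0^\ast \widetilde h = 1^\ast \widetilde h = 1^\ast h$, as desired.

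The core technical task is thus the lifting claim: the sheafification map $\pi_0\mathcal{X}(\mathbb{A}^1_L) \to \pi_0(L_{\rm Nis}\mathcal{X}(\mathbb{A}^1_L))$ is surjective. Writing $L = k(X)$ for a smooth $k$-model $X$, and using the finitariness of $\mathcal{X}$ (after left Kan extension) and of Nisnevich sheafification, this reduces to: for any $h_U \in \pi_0(L_{\rm Nis}\mathcal{X}(U \times \mathbb{A}^1))$ with $U \subseteq X$ open, produce a shrinking $U' \subseteq U$ and a lift $\widetilde h_{U'} \in \pi_0\mathcal{X}(U' \times \mathbb{A}^1)$. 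Restricting $h_U$ to the generic point $\Spec L(t)$ (which is Nisnevich-local) yields an automatic lift $h_\eta \in \pi_0\mathcal{X}(L(t))$, represented by a presheaf section $\widetilde h_W$ on some dense open $W \subseteq U \times \mathbb{A}^1$; after further shrinking $W$, a standard colimit argument arranges that $\widetilde h_W$ and $h_U|_W$ coincide in $\pi_0(L_{\rm Nis}\mathcal{X}(W))$, giving a lift over $W$.

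The main obstacle, and the hard part, is extending this lift from the dense open $W \subseteq U \times \mathbb{A}^1$ to a Zariski product $U' \times \mathbb{A}^1$. If the complement $Z = (U\times\mathbb{A}^1)\setminus W$ does not project dominantly onto $U$, then shrinking $U$ resolves the issue; otherwise $Z$ is a horizontal divisor, finite over $U$ generically, and this is where (${\rm Surj}_{\leq 1}$) enters crucially. Namely, for closed points $u \in U$ with $\kappa(u)/k$ finite separable, the fiber $\{u\} \times \mathbb{A}^1 \cong \mathbb{A}^1_{\kappa(u)}$ is a genuine smooth $k$-curve where (${\rm Surj}_{\leq 1}$) applies, producing presheaf lifts of $h_U$ on each such fiber; combined with the $\mathbb{A}^1$-invariance of $\mathcal{X}$ on those fibers, one obtains fiberwise patching data which must be glued into a consistent section on $U' \times \mathbb{A}^1$. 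This fiberwise patching, together with the colimit bookkeeping, is the technical heart of the proof.
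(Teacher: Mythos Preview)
Your first two paragraphs are exactly the paper's argument: unwind $\pi_0({\rm Sing}\,L_{\rm Nis}\mathcal{X}(L))$ as an $\mathbb{A}^1$-homotopy quotient, reduce to showing $0^\ast h = 1^\ast h$ for any $h\in \pi_0(L_{\rm Nis}\mathcal{X}(\mathbb{A}^1_L))$, and observe that a lift $\widetilde h\in \pi_0\mathcal{X}(\mathbb{A}^1_L)$ finishes the job by $\mathbb{A}^1$-invariance of $\mathcal{X}$.

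The divergence is in how you produce the lift. The paper simply applies the hypothesis (${\rm Surj}_{\leq1}$) to $C=\mathbb{A}^1_L$ itself: $\mathbb{A}^1_L=\Spec L[t]$ is an irreducible (essentially) smooth $k$-scheme of dimension~$1$, so the hypothesis hands you $\widetilde h$ directly. That is the entire proof. You appear to read (${\rm Surj}_{\leq1}$) as only holding for finite-type smooth $k$-curves and then try to bootstrap to $\mathbb{A}^1_L$ via a model $U\times\mathbb{A}^1$; but the paper's conventions in Section~\ref{sec:prelims} (left Kan extension to ${\rm EssSm}_k$), the phrasing of Proposition~\ref{prop:lnis-dim1}, and the way (${\rm Surj}_{\leq1}$) is verified in Lemma~\ref{lem:bg} all make clear that the hypothesis is meant for essentially smooth schemes of dimension~$\leq 1$, which includes $\mathbb{A}^1_L$.

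Even setting interpretation aside, your proposed workaround in the last paragraph has a genuine gap. Knowing that $h_U$ lifts along each closed fiber $\{u\}\times\mathbb{A}^1$ gives you a family of elements in $\pi_0\mathcal{X}(\mathbb{A}^1_{\kappa(u)})$, one for each closed point $u$, but sections of a presheaf on $U'\times\mathbb{A}^1$ are not recoverable from their restrictions to closed fibers, and there is no mechanism here (no sheaf condition in the horizontal direction, no spreading-out principle across the base) that would assemble these fiberwise lifts into a single class in $\pi_0\mathcal{X}(U'\times\mathbb{A}^1)$. The patching you need would have to happen Nisnevich-locally on $U'\times\mathbb{A}^1$ itself, not fiber by fiber over $U'$; but that is precisely the content of (${\rm Surj}_{\leq1}$) for $\mathbb{A}^1_L$ after passing to the colimit, which brings you back to the paper's one-line argument.
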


\begin{proof} For any presheaf $\mathcal{Y}$, the set $\pi_0({\rm Sing}\mathcal{Y}(L))$ is explicitly presented as the set $\pi_0(\mathcal{Y}(L))$ modulo the equivalence relation generated by $\mathbb{A}^1$-homotopies. To prove injectivity, given two elements $x, y \in \pi_0(L_{\rm Nis}\mathcal{X}(L)) = \pi_0(\mathcal{X}(L))$ which are connected by a chain of $\mathbb{A}^1$-homotopies, we need to prove that $x = y$. Without loss of generality, we may assume that $x$ and $y$ are connected by a single $\mathbb{A}^1$-homotopy, i.e., there exists a map 
\[
h:\mathbb{A}^1_L \rightarrow L_{\rm Nis}\mathcal{X},
\]
such that precomposing with $\Spec L \xrightarrow{0} \mathbb{A}^1_L$ (resp. $\Spec L \xrightarrow{1} \mathbb{A}^1_L$) is the section $x$ (resp. $y$). But by the assumption (${\rm Surj}_{\leq1}$), there exists a map $\widetilde{h}: \mathbb{A}^1_L  \rightarrow \mathcal{X}$ which, when composed with the map $\mathcal{X} \rightarrow L_{\rm Nis}\mathcal{X}$ is $h$. Since $\mathcal{X}$ is $\mathbb{A}^1$-invariant, we conclude.
\end{proof}

\begin{lemma}\label{lem:bg}
    Let $F$ be a field and let $G$ be a reductive group satisfying $\mathbb{A}^1$-invariance for finitely generated extension fields $L/F$. The presheaf ${\rm Sing}{\rm B}_{\et}G$ satisfies $({\rm Surj}_{\leq1})$.
\end{lemma}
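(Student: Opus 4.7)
The content of $({\rm Surj}_{\leq 1})$ is vacuous in dimension zero (Nisnevich sheafification over a field is trivial), so the task is to prove surjectivity of $\pi_0({\rm Sing}{\rm B}_{\et}G(C)) \to \pi_0(L_{\rm Nis}{\rm Sing}{\rm B}_{\et}G(C))$ for a smooth irreducible curve $C/F$ with function field $K$. I plan to model this on Proposition~\ref{prop:lnis-dim1}: write $\mathcal{G}$ for the finitary presheaf $\pi_0({\rm Sing}{\rm B}_{\et}G)={\rm coeq}\bigl({\rm H}^1_{\et}(-\times\mathbb{A}^1,G)\rightrightarrows{\rm H}^1_{\et}(-,G)\bigr)$ and use the Godement description to represent a section $\sigma$ of the target as a compatible family $(\sigma_\eta,\{\sigma_c\}_c)$ with $\sigma_\eta\in\mathcal{G}(K)$ and $\sigma_c\in\mathcal{G}(\mathcal{O}^{\rm h}_{C,c})$, agreeing in $\mathcal{G}(K^{\rm h}_c)$ where $K^{\rm h}_c=\op{Frac}(\mathcal{O}^{\rm h}_{C,c})$.

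The hypothesis of $\mathbb{A}^1$-invariance over finitely generated extensions of $F$ identifies $\mathcal{G}(L)={\rm H}^1_{\et}(L,G)$ for every such $L/F$. Writing $K^{\rm h}_c$ as a filtered colimit of its finite sub-extensions of $K$ and applying finitariness extends this identification to $\mathcal{G}(K^{\rm h}_c)={\rm H}^1_{\et}(K^{\rm h}_c,G)$, so the Godement compatibility now asserts equality of genuine $G$-torsor classes in ${\rm H}^1_{\et}(K^{\rm h}_c,G)$. The next step is to lift: $\sigma_\eta$ is represented by an honest $G$-torsor over $K$, which extends by finitariness of ${\rm H}^1_{\et}(-,G)$ to a torsor $P$ on some open $U\subseteq C$; and for each closed $c\notin U$, $\sigma_c$ lifts to a torsor $T_c\in{\rm H}^1_{\et}(\mathcal{O}^{\rm h}_{C,c},G)$, which is again spread out, by finitariness, to a torsor over some étale Nisnevich neighbourhood $(Y_c,y_c)\to(C,c)$.

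What remains is to arrange that $P|_{Y_c\times_C U}\cong T_c|_{Y_c\times_C U}$ as torsors, so that Nisnevich descent for ${\rm H}^1_{\et}(-,G)$ produces a torsor $\tilde P$ on $C$. The compatibility from the previous step supplies an isomorphism $P|_{K^{\rm h}_c}\cong T_c|_{K^{\rm h}_c}$, and finitariness, applied to the Isom-scheme of the two torsors, promotes it first to an isomorphism over some finite sub-extension (the function field of a suitable refinement of $Y_c$) and then to an isomorphism over a Zariski open $V\subseteq Y_c\times_C U$ containing the generic point. Shrinking $Y_c$ so as to delete the closed complement of $V$ in $Y_c\times_C U$---which by construction does not meet $y_c$---produces the required overlap agreement. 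Nisnevich gluing then yields a torsor $\tilde P$ on $C$, and by construction the Godement family of its class in $\mathcal{G}(C)$ is $(\sigma_\eta,\{\sigma_c\}_c)$, giving the desired preimage of $\sigma$.

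The main obstacle lies in the final spreading-out step: promoting a $K^{\rm h}_c$-level agreement of torsors to an honest isomorphism over a Nisnevich neighbourhood of $c$. Without the $\mathbb{A}^1$-invariance hypothesis, the Godement data would only encode $\mathbb{A}^1$-homotopies between $P|_{K^{\rm h}_c}$ and $T_c|_{K^{\rm h}_c}$, with no apparent way to rigidify such a homotopy into an honest isomorphism of torsors that can be propagated along $Y_c$ and used in Nisnevich descent for ${\rm H}^1_{\et}(-,G)$.
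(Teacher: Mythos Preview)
Your proposal is correct and follows essentially the same strategy as the paper: restrict a section to the generic point and to henselizations of closed points, lift these to actual $G$-torsors, use the assumed $\mathbb{A}^1$-invariance over fields to upgrade the $\mathbb{A}^1$-homotopy agreement on overlaps to an honest isomorphism of torsors, and then glue via Nisnevich descent for ${\rm B}_{\et}G$. The paper works directly with the henselization $\mathcal{O}^{\rm h}_{C,x}$ and the limiting Nisnevich square, invoking that ${\rm B}_{\et}G$ is a Nisnevich sheaf to conclude; you instead spread the local data out to an \'etale neighbourhood $Y_c$ and spread the torsor isomorphism via the Isom-scheme, which is the honest way to carry out the same step and makes the final Nisnevich gluing take place over an actual distinguished square of smooth schemes.
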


\begin{proof} An element $\alpha \in \pi_0(L_{\rm Nis}{\rm Sing}{\rm B}_{\et}G(C))$ is represented by a map $C \rightarrow L_{\rm Nis}{\rm Sing}{\rm B}_{\et}G$. Let $\eta: \Spec k(C) \rightarrow C$ be the generic point, then we have an element
  \[
  \alpha|_{\eta} \in \pi_0(L_{\rm Nis}{\rm Sing}{\rm B}_{\et}G(k(C))) = \pi_0({\rm Sing}{\rm B}_{\et}G(k(C))).
  \]
  Since $\pi_0(L_{\rm Nis}{\rm Sing}{\rm B}_{\et}G)$ is finitary, $\alpha|_{\eta}$ extends to a section $\alpha_U \in\pi_0(L_{\rm Nis}{\rm Sing}{\rm B}_{\et}G(U))$ and the reduced complement $C \setminus U$ consists of finitely many points. Suppose that the complement consists of only a singleton $\{x\}$. Consider the Nisnevich square
  \[
  \xymatrix{
W= \Spec {\rm Frac(\mathcal{O}^h_{X,x})}   \ar[r] \ar[d] & \Spec \mathcal{O}^h_{X,x}\ar[d] \\
    U \ar[r] & C.
  }
  \]
  Then the sections 
  \[
  \alpha|_{ \mathcal{O}^{\rm h}_{X,x}} \in \pi_0(L_{\rm Nis}{\rm Sing}{\rm B}_{\et}G(\mathcal{O}^{\rm h}_{X,x})) = \pi_0({\rm Sing}{\rm B}_{\et}G(\mathcal{O}^{\rm h}_{X,x})) \qquad \alpha_U \in\pi_0({\rm Sing}{\rm B}_{\et}G(U)),
  \]
 both agree on $\pi_0(L_{\rm Nis}{\rm Sing}{\rm B}_{\et}G(W)) = \pi_0({\rm Sing}{\rm B}_{\et}G(W))$. This means that the elements $\alpha|_{ \mathcal{O}^{\rm h}_{X,x}}$ and $\alpha_U$ are represented as $G$-torsors over their respective base which agrees up to $\mathbb{A}^1$-homotopy on $W$. But now $W$ is the spectrum of a field and hence we can use assumed $\mathbb{A}^1$-invariance over fields to conclude that these torsors are actually isomorphic on $W$. Since ${\rm B}_{\et}G$ is a Nisnevich sheaf, we conclude.
\end{proof}

\begin{proposition}
  \label{prop:inj-f}
  Let $F$ be a field and let $G$ be a reductive group satisfying $\mathbb{A}^1$-invariance for finitely generated extension fields $L/F$. Then the morphism ${\rm can}_G$ induces injections
  \[
    {\rm can}_G\colon\mathcal{H}^1_{\et}(G)(\Spec L)\hookrightarrow \left(a_{\rm Nis}\pi_0^{\mathbb{A}^1}{\rm B}_{\et}G\right)(\Spec L)
  \]
  for all finitely generated extension fields $L/F$.
\end{proposition}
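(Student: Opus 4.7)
The plan is to exploit the fact that, since $L$ is a field, Nisnevich sheafification does not change sections over $\Spec L$. Thus $\mathcal{H}^1_{\et}(G)(\Spec L)={\rm H}^1_{\et}(\Spec L,G)$ and
\[
\bigl(a_{\rm Nis}\pi_0^{\mathbb{A}^1}{\rm B}_{\et}G\bigr)(\Spec L)=\pi_0^{\mathbb{A}^1}({\rm B}_{\et}G)(\Spec L)=\pi_0\bigl(L_{\rm mot}{\rm B}_{\et}G(\Spec L)\bigr),
\]
so the statement reduces to the injectivity of the natural map ${\rm H}^1_{\et}(\Spec L,G)\to \pi_0(L_{\rm mot}{\rm B}_{\et}G(\Spec L))$.

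Next I would use the coequalizer formula $\pi_0({\rm Sing}\,\mathcal{X}(T))=\op{coeq}\bigl(\pi_0\mathcal{X}(T\times\mathbb{A}^1)\rightrightarrows \pi_0\mathcal{X}(T)\bigr)$ from Section~\ref{sec:prelims}. The hypothesis that ${\rm H}^1_{\et}(-,G)$ is $\mathbb{A}^1$-invariant over finitely generated extension fields of $F$ forces the two face maps ${\rm H}^1_{\et}(\mathbb{A}^1_L,G)\rightrightarrows {\rm H}^1_{\et}(\Spec L,G)$ to agree (each is a section of the projection pullback, which is now an isomorphism), and hence
\[
{\rm H}^1_{\et}(\Spec L,G)=\pi_0({\rm Sing}\,{\rm B}_{\et}G(\Spec L))=\pi_0\bigl(L_{\rm Nis}{\rm Sing}\,{\rm B}_{\et}G(\Spec L)\bigr),
\]
the last equality using once more that $L$ is a field. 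Now I would apply Lemma~\ref{lem:curves} to $\mathcal{X}={\rm Sing}\,{\rm B}_{\et}G$: it is $\mathbb{A}^1$-invariant by construction and satisfies $({\rm Surj}_{\leq 1})$ by Lemma~\ref{lem:bg}. This yields the injection
\[
{\rm H}^1_{\et}(\Spec L,G)\hookrightarrow \pi_0\bigl({\rm Sing}\,L_{\rm Nis}{\rm Sing}\,{\rm B}_{\et}G(\Spec L)\bigr).
\]

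To pass from the target of this injection to $\pi_0(L_{\rm mot}{\rm B}_{\et}G(\Spec L))$, I would realize $L_{\rm mot}{\rm B}_{\et}G$ as the sequential colimit of iterates $\mathcal{Y}_0={\rm Sing}\,{\rm B}_{\et}G$, $\mathcal{Y}_{n+1}={\rm Sing}\,L_{\rm Nis}\mathcal{Y}_n$. Since $\pi_0$ commutes with filtered colimits, $\pi_0(L_{\rm mot}{\rm B}_{\et}G(\Spec L))=\colim_n\pi_0(\mathcal{Y}_n(\Spec L))$, and applying Lemma~\ref{lem:curves} to each $\mathcal{Y}_n$ in turn would produce a compatible chain of injections ${\rm H}^1_{\et}(\Spec L,G)\hookrightarrow \pi_0(\mathcal{Y}_n(\Spec L))$, hence an injection into the colimit.

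The hard part is to verify that each iterate $\mathcal{Y}_n$ with $n\geq 1$ still satisfies the condition $({\rm Surj}_{\leq 1})$, so that Lemma~\ref{lem:curves} remains applicable at every stage. Inspection of the proof of Lemma~\ref{lem:bg} suggests that its only substantive inputs are: (i) that over a field the relevant $\pi_0$ reduces to ${\rm H}^1_{\et}(-,G)$, which persists for all $\mathcal{Y}_n$ by the $\mathbb{A}^1$-invariance hypothesis together with the Nisnevich-triviality over fields; and (ii) the Nisnevich gluing of sections across the distinguished square around a closed point of a smooth curve, which is a general feature of Nisnevich sheaves and therefore available for each $\mathcal{Y}_n$. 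Both features carry over inductively, so the iteration should go through.
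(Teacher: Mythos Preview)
Your approach is essentially the same as the paper's: reduce to fields, write $L_{\rm mot}$ as a transfinite iteration of ${\rm Sing}$ and $L_{\rm Nis}$, and use Lemma~\ref{lem:curves} together with Lemma~\ref{lem:bg} to propagate injectivity through each stage. The paper organizes the induction slightly more explicitly, carrying \emph{two} hypotheses simultaneously at stage $n$: (i) surjectivity of ${\rm H}^1_{\et}(C,G)\to\pi_0\bigl((\op{Sing}\circ L_{\rm Nis})^n{\rm B}_{\et}G\bigr)(C)$ for all essentially smooth $C$ of dimension $\leq 1$, and (ii) injectivity over all finitely generated fields. Your claim that ``over a field the relevant $\pi_0$ reduces to ${\rm H}^1_{\et}(-,G)$'' is exactly statement (ii), but your justification (``by the $\mathbb{A}^1$-invariance hypothesis together with Nisnevich-triviality over fields'') is incomplete: to kill the $\mathbb{A}^1$-homotopy relation at stage $n+1$ you must lift a homotopy in $\pi_0(L_{\rm Nis}\mathcal{Y}_n(\mathbb{A}^1_L))$ back to an honest $G$-torsor over $\mathbb{A}^1_L$, and that lift exists precisely because of $({\rm Surj}_{\leq 1})$ for $\mathcal{Y}_n$. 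So (i) and (ii) feed into each other and must be proved together; once you make this explicit, your argument and the paper's coincide.
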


\begin{proof}
  Let $L$ be a finitely generated extension of $F$. Since $a_{\rm Nis}$ does not change values of sheaves on fields, we need to prove that the map
  \[
    {\rm H}^1_{\et}(\Spec L,G)\hookrightarrow \pi_0^{\mathbb{A}^1}{\rm B}_{\et}G(\Spec L)=\pi_0(L_{\rm mot}{\rm B}_{\et}G(\Spec L))
  \]
  is injective. From the description of $L_{\rm mot}$ as a filtered colimit of the functors $L_{\rm Nis}$ and $L_{\mathbb{A}^1}=\op{Sing}_\bullet^{\mathbb{A}^1}$ (see, for example, \cite{antieau:elmanto}), it suffices to prove that the morphism
  \[
    {\rm H}^1_{\et}(\Spec L,G)\hookrightarrow \pi_0\left(\left(L_{\rm Nis}\circ \op{Sing}_{\bullet}^{\mathbb{A}^1}\right)^n{\rm B}_{\et}G\right)(\Spec L)
  \]
  is injective for all natural numbers $n$. The claim of the proposition then follows from the fact that filtered colimits preserve monomorphisms.

  We will show by induction on $n$ that
  \begin{enumerate}[(i)]
  \item the morphism ${\rm H}^1_{\et}(C,G)\to\pi_0\left(\left(\op{Sing}_{\bullet}^{\mathbb{A}^1}\circ L_{\rm Nis}\right)^n{\rm B}_{\et}G\right)(C)$  is surjective for all essentially smooth $F$-schemes of dimension $\leq 1$,
  \item the morphism ${\rm H}^1_{\et}(L,G)\to \pi_0\left(\left(\op{Sing}_{\bullet}^{\mathbb{A}^1}\circ L_{\rm Nis}\right)^n{\rm B}_{\et}G\right)(L)$ is injective for all finitely generated field extensions of $F$.
  \end{enumerate}

  The base of the induction is the case $n=1$. In this case, (i) is proved in Lemma~\ref{lem:bg}. For (ii), we note that the target of the map
\[
{\rm H}^1_{\et}(L; G) \rightarrow \pi_0(\op{Sing}^{\mathbb{A}^1}_\bullet{\rm B}_{\et}G(L))
\]
is the coequalizer of the maps
\[
i_0^*, i_1^*: {\rm H}^1_{\et}(\mathbb{A}^1_L; G) \rightarrow {\rm H}^1_{\et}(L; G). 
\]
Therefore the claim (ii) is equivalent to the following statement:
\begin{itemize}
\item if $\mathcal{P}, \mathcal{P'}$ are two $G$-torsors over $\Spec L$ which are $\mathbb{A}^1$-homotopic, then they are, in fact, isomorphic as $G$-torsors.
\end{itemize}
This follows from the assumption of $\mathbb{A}^1$-invariance of ${\rm H}^1_{\et}(-,G)$ over $L$: any $G$-torsor over $\mathbb{A}^1_L$ is extended from $\Spec L$ and therefore the fibers over $0$ and $1$ are isomorphic. 

Now for the induction step, assume (i) and (ii) hold for the $n$-th iteration of $\op{Sing}_\bullet^{\mathbb{A}^1}\circ L_{\rm Nis}$; we want to show they continue to hold for the $(n+1)$-th iteration. For this,  we will prove that ${\rm H}^1_{\et}(C,G)\to\pi_0\left(L_{\rm Nis}\left(\op{Sing}_{\bullet}^{\mathbb{A}^1}\circ L_{\rm Nis}\right)^n{\rm B}_{\et}G\right)(C)$  is surjective for essentially smooth $F$-schemes of dimension $\leq 1$. Then (ii) will follow using Lemma~\ref{lem:curves} and the inductive hypothesis (ii); and (i) will follow since applying $\op{Sing}_\bullet^{\mathbb{A}^1}$ only introduces $\mathbb{A}^1$-relations and is hence surjective. The required surjectivity of ${\rm H}^1_{\et}(C,G)\to\pi_0\left(L_{\rm Nis}\left(\op{Sing}_{\bullet}^{\mathbb{A}^1}\circ L_{\rm Nis}\right)^n{\rm B}_{\et}G\right)(C)$ now follows by an argument exactly as in Lemma~\ref{lem:bg}, using the inductive hypothesis (i) to represent sections of $\pi_0\left(\left(\op{Sing}_{\bullet}^{\mathbb{A}^1}\circ L_{\rm Nis}\right)^n{\rm B}_{\et}G\right)$ by $G$-torsors.

By induction, (ii) holds for all natural $n$, and this establishes the claimed injectivity.
\end{proof}


%
%

\begin{proposition}\label{prop:inj-sm}
  Let $F$ be a field and let $G$ be a reductive group satisfying $\mathbb{A}^1$-invariance for finitely generated extension fields $L/F$. The  morphism ${\rm can}_G$ is injective over any smooth $k$-scheme.
\end{proposition}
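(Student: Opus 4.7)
The plan is a straightforward diagram chase that combines the injectivity on fields (Proposition~\ref{prop:inj-f}) with the detection of sections of $\mathcal{H}^1_{\et}(G)$ on function fields coming from the Grothendieck--Serre property. Since both $\mathcal{H}^1_{\et}(G)$ and $a_{\rm Nis}\pi_0^{\mathbb{A}^1}({\rm B}_{\et}G)$ are Nisnevich sheaves of pointed sets and a decomposition into connected components is a Nisnevich covering, I first reduce to the case in which $X$ is a connected (hence irreducible) smooth $F$-scheme with function field $K$.

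Next, I consider the commutative square
\[
\begin{tikzcd}
\mathcal{H}^1_{\et}(G)(X) \ar[r, "{\rm can}_G"] \ar[d, hook] & a_{\rm Nis}\pi_0^{\mathbb{A}^1}({\rm B}_{\et}G)(X) \ar[d] \\
\mathcal{H}^1_{\et}(G)(\Spec K) \ar[r, "{\rm can}_G"] & a_{\rm Nis}\pi_0^{\mathbb{A}^1}({\rm B}_{\et}G)(\Spec K)
\end{tikzcd}
\]
obtained by restricting to the generic point. The left vertical arrow is injective by Proposition~\ref{prop:lnis-unramified-1}, whose hypothesis is verified for the presheaf $U\mapsto{\rm H}^1_{\et}(U,G)$ by the Grothendieck--Serre conjecture as resolved in \cite{fedorov:panin} (together with \cite{panin} over finite fields). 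The bottom horizontal arrow is injective by Proposition~\ref{prop:inj-f}, applied to the finitely generated extension field $K/F$. Consequently the composition $\mathcal{H}^1_{\et}(G)(X)\to a_{\rm Nis}\pi_0^{\mathbb{A}^1}({\rm B}_{\et}G)(\Spec K)$ is injective, which forces the top horizontal arrow ${\rm can}_G$ to be injective as well.

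There is no genuine obstacle here; all the technical content has already been packaged into Proposition~\ref{prop:inj-f} (where the induction over iterations of $\op{Sing}_\bullet^{\mathbb{A}^1}\circ L_{\rm Nis}$ does the actual work) and into the Grothendieck--Serre input to Proposition~\ref{prop:lnis-unramified-1}. The only minor point to check is that the right vertical arrow is well-defined and compatible with ${\rm can}_G$, but this is automatic from the functoriality of restriction along $\Spec K\to X$ in both presheaves and from naturality of the transformation ${\rm can}_G$ of Construction~\ref{cons:can}.
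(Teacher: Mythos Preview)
Your proof is correct and follows exactly the same route as the paper: restrict to the generic point, use Proposition~\ref{prop:lnis-unramified-1} together with the Grothendieck--Serre conjecture for injectivity of the left vertical arrow, and use Proposition~\ref{prop:inj-f} for the bottom arrow to conclude via the commutative square. The only cosmetic differences are that you make the reduction to irreducible $X$ explicit and that the paper labels the bottom arrow a bijection (though injectivity, which is all Proposition~\ref{prop:inj-f} actually supplies, is what the diagram chase needs).
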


\begin{proof}
  Let $X \in {\rm Sm}_k$. Since we have a bijection over the function field $K$ of $X$ from Proposition~\ref{prop:inj-f}, the result follows from the commutative square
\begin{equation}\label{eq:gssquare}
\xymatrix{
  \mathcal{H}^1_{\et}(G)(X)\ar[r] \ar@{^(->}[d] &  a_{\rm Nis}\pi_0^{\mathbb{A}^1}({\rm B}_{\et}G)(X) \ar[d] \\
  \mathcal{H}^1_{\et}(G)(\Spec K)\ar[r]_{\cong} &  a_{\rm Nis}\pi_0^{\mathbb{A}^1}({\rm B}_{\et}G)(\Spec K).
  }
\end{equation}
Note that we have used Proposition~\ref{prop:lnis-unramified-1} and the validity of the Grothendieck--Serre conjecture to get injectivity for the left vertical map.
\end{proof}


\begin{proposition}\label{prop:locsurj}
 The morphism ${\rm can}_G$ is surjective on henselian regular local rings
\end{proposition}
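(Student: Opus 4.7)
The plan is to reduce the surjectivity claim to an inductive one along the iterated tower defining the motivic localization, exploiting the special behaviour of a henselian regular local $\Spec R\in\mathrm{EssSm}_F$: since $\Spec R$ is a point for the Nisnevich topology, for any finitary presheaf $\mathcal{Y}$ on ${\rm Sm}_F$ both $\mathcal{Y}(R)\to L_{\rm Nis}\mathcal{Y}(R)$ and the sheafification map $\pi_0\mathcal{Y}(R)\to a_{\rm Nis}\pi_0\mathcal{Y}(R)$ are bijections. Applied to $\mathcal{Y}=L_{\rm mot}{\rm B}_{\et}G$, this identifies $a_{\rm Nis}\pi_0^{\mathbb{A}^1}({\rm B}_{\et}G)(R)=\pi_0(L_{\rm mot}{\rm B}_{\et}G)(R)$, so the map to be shown surjective is simply ${\rm H}^1_{\et}(R,G)\to \pi_0(L_{\rm mot}{\rm B}_{\et}G)(R)$.

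As in the proof of Proposition~\ref{prop:inj-f}, I use the presentation $L_{\rm mot}\simeq\colim_n T^n$ with $T:=L_{\rm Nis}\circ\op{Sing}_\bullet^{\mathbb{A}^1}$. Combined with the preceding observation, this yields
\[
\pi_0(L_{\rm mot}{\rm B}_{\et}G)(R)\;=\;\colim_n\pi_0(T^n{\rm B}_{\et}G)(R),
\]
so, since filtered colimits preserve surjections, it suffices to show that the natural map $\lambda_n\colon{\rm H}^1_{\et}(R,G)\to\pi_0(T^n{\rm B}_{\et}G)(R)$ is surjective for every $n\geq 0$.

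I argue by induction on $n$. The base case $\lambda_0$ is the identity. For the inductive step, applying the Nisnevich-point observation once more to the finitary presheaf $\op{Sing}_\bullet^{\mathbb{A}^1}T^n{\rm B}_{\et}G$ gives
\[
\pi_0(T^{n+1}{\rm B}_{\et}G)(R)\;=\;a_{\rm Nis}\pi_0\bigl(\op{Sing}_\bullet^{\mathbb{A}^1}T^n{\rm B}_{\et}G\bigr)(R)\;=\;\pi_0\bigl(\op{Sing}_\bullet^{\mathbb{A}^1}T^n{\rm B}_{\et}G\bigr)(R),
\]
and by the explicit coequalizer formula for $\pi_0\op{Sing}_\bullet^{\mathbb{A}^1}$ recalled in Section~\ref{sec:prelims}, the right hand side is a quotient of $\pi_0(T^n{\rm B}_{\et}G)(R)$. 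Composing the quotient surjection with the inductive hypothesis $\lambda_n$ closes the induction and yields $\lambda_{n+1}$.

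The only technical point that the plan rests on is the finitariness of each iterate $T^n{\rm B}_{\et}G$, which holds because ${\rm B}_{\et}G$ is finitary for the finite-type group $G$ and both $L_{\rm Nis}$ and $\op{Sing}_\bullet^{\mathbb{A}^1}$ preserve finitariness on ${\rm Sm}_F$; this is the step where I expect to have to be most careful in the write-up. Once that bookkeeping is set up the argument is otherwise purely formal and, notably in contrast to Propositions~\ref{prop:inj-f} and~\ref{prop:inj-sm}, it requires neither the Grothendieck--Serre property nor $\mathbb{A}^1$-invariance of ${\rm H}^1_{\et}(-,G)$ over fields: at a henselian local point, surjectivity comes for free from the coequalizer shape of $\op{Sing}_\bullet^{\mathbb{A}^1}$ and the triviality of Nisnevich sheafification.
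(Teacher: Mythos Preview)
Your proof is correct. The paper's argument is shorter: it directly invokes the unstable $\mathbb{A}^1$-connectivity theorem of Morel--Voevodsky (already recalled in Section~\ref{sec:prelims}, and cited in the proof as \cite{morel:voevodsky}*{Corollary 2.3.22}), which says that $a_{\rm Nis}\pi_0(\mathcal{X})\to a_{\rm Nis}\pi_0^{\mathbb{A}^1}(\mathcal{X})$ is an epimorphism of Nisnevich sheaves for any presheaf of spaces $\mathcal{X}$. Applying this with $\mathcal{X}={\rm B}_{\et}G$ and evaluating at a Nisnevich stalk gives the surjectivity in one line. Your tower induction is exactly an unwinding of (the relevant instance of) that connectivity theorem: the same two observations drive both arguments, namely that $\op{Sing}_\bullet^{\mathbb{A}^1}$ only quotients $\pi_0$ and that $L_{\rm Nis}$ is inert at a henselian local point. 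What your version buys is self-containedness and the explicit remark that neither Grothendieck--Serre nor $\mathbb{A}^1$-invariance over fields enters here; what the paper's version buys is brevity, since the black-box result was already on the table from the preliminaries.
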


\begin{proof}
  The morphism is surjective on local rings for the Nisnevich topology because $\pi_0({\rm B}_{\et} G)\to \pi_0^{\mathbb{A}^1}({\rm B}_{\et}G)$ (map from simplicial $\pi_0$) is surjective after Nisnevich sheafification, cf.~\cite{morel:voevodsky}*{Corollary 2.3.22}. Therefore it is surjective on henselian regular local rings $R$ and any map $\Spec R\to {\rm B}_{\et}G$ is represented by an \'etale $G$-torsor over $\Spec R$.
\end{proof}

\begin{theorem}
  \label{thm:h1et-vs-pi0a1}
  Let $F$ be a field and let $G$ be a reductive group satisfying $\mathbb{A}^1$-invariance for finitely generated extension fields $L/F$. Then the natural map $\mathcal{H}^1_{\et}(G)\to a_{\rm Nis}\pi_0^{\mathbb{A}^1}({\rm B}_{\et}G)$ is an isomorphism of sheaves. In particular, $a_{\rm Nis}\pi_0^{\mathbb{A}^1}({\rm B}_{\et}G)$ is a homotopy invariant Nisnevich sheaf.
\end{theorem}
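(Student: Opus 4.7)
The plan is to assemble the three propositions just proved --- Proposition~\ref{prop:inj-f}, Proposition~\ref{prop:inj-sm}, and Proposition~\ref{prop:locsurj} --- into a bijectivity statement on all Nisnevich stalks. Since a morphism of Nisnevich sheaves of sets is an isomorphism precisely when it induces bijections on every stalk, and since the Nisnevich stalks of a sheaf on ${\rm Sm}_F$ are indexed by the henselian local rings $\mathcal{O}_{X,x}^{\rm h}$ for $X\in{\rm Sm}_F$ and $x\in X$, I will check the two conditions separately on such stalks.

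For injectivity on stalks, I exploit the fact that both $\mathcal{H}^1_{\et}(G)$ and $a_{\rm Nis}\pi_0^{\mathbb{A}^1}({\rm B}_{\et}G)$ are finitary, so their values at $\mathcal{O}_{X,x}^{\rm h}$ are filtered colimits of their values on smooth Nisnevich neighbourhoods of $x$. Proposition~\ref{prop:inj-sm} yields injectivity of ${\rm can}_G$ over each such smooth neighbourhood, and since filtered colimits of sets preserve monomorphisms, injectivity passes to the stalk. For surjectivity on stalks, I invoke Proposition~\ref{prop:locsurj} directly --- it already treats henselian regular local rings. Combining these gives that ${\rm can}_G$ is bijective on every Nisnevich stalk, hence an isomorphism of Nisnevich sheaves of pointed sets.

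For the second assertion, the homotopy invariance of $a_{\rm Nis}\pi_0^{\mathbb{A}^1}({\rm B}_{\et}G)$ is equivalent, via the isomorphism just obtained, to the homotopy invariance of $\mathcal{H}^1_{\et}(G)$. This follows from Proposition~\ref{prop:lnis-hoinv} applied to the presheaf $U\mapsto {\rm H}^1_{\et}(U,G)$: its hypothesis (a), the strong Grothendieck--Serre property, is the content of the Grothendieck--Serre conjecture for reductive $G$ established in \cites{fedorov:panin, panin}, while its hypothesis (b) is precisely the $\mathbb{A}^1$-invariance over finitely generated extension fields assumed in the statement of the theorem.

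The main technical input, already absorbed into the preceding propositions, is the surjectivity of ${\rm can}_G$ on henselian local rings; this in turn rests on the Morel--Voevodsky observation that the simplicial $\pi_0$ surjects onto $\pi_0^{\mathbb{A}^1}$ after Nisnevich sheafification, which lets one lift every motivic homotopy class of maps $\Spec R\to {\rm B}_{\et}G$ out of a henselian local scheme to an honest \'etale $G$-torsor. Beyond this, no new ingredient is needed --- the rest is a formal assembly of the preceding results on Nisnevich stalks.
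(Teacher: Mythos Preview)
Your argument is correct and follows the same route as the paper, which simply cites Propositions~\ref{prop:inj-sm} and~\ref{prop:locsurj} for the first statement; your more explicit stalkwise assembly is a faithful unpacking of that reference. One small imprecision: hypothesis~(b) of Proposition~\ref{prop:lnis-hoinv} concerns the Nisnevich sheafification $a_{\rm Nis}\mathcal{F}(\mathbb{A}^1_L)$ rather than ${\rm H}^1_{\et}(\mathbb{A}^1_L,G)$ itself, so to deduce it from the theorem's hypothesis you need Proposition~\ref{prop:lnis-dim1} to lift sections over $\mathbb{A}^1_L$ (as in the proof of Proposition~\ref{prop:hoinv-h1etg}).
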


\begin{proof}
  The first statement follows from Propositions~\ref{prop:locsurj} and~\ref{prop:inj-sm}.
\end{proof}

\begin{remark}
  The above now establishes Morel's conjecture for classifying spaces of reductive groups $G$ having $\mathbb{A}^1$-invariance over fields. If in addition local purity holds for $G$, the sheaf $a_{\rm Nis}\pi_0^{\mathbb{A}^1}{\rm B}_{\et}G$ is also unramified, as a consequence of Theorem~\ref{thm:h1et-vs-pi0a1} and Lemma~\ref{lem:purity-unramified}. However, it may be worth pointing out that while homotopy invariance is obviously expected for the sheaves $a_{\rm Nis}\pi_0^{\mathbb{A}^1}(\mathcal{X})$ of $\mathbb{A}^1$-connected components for arbitrary motivic spaces $\mathcal{X}$, these sheaves are not expected to be unramified in general. Example~\ref{ex:a1-rigid} shows that the sheaves of $\mathbb{A}^1$-connected components of non-proper $\mathbb{A}^1$-rigid varieties usually fail to be unramified. In some sense, being unramified is a weak properness statement.
\end{remark}

\section{Applications}
\label{sec:examples}

In this final section, we will discuss some consequences of our results on $\mathcal{H}^1_{\et}(G)$ and $a_{\rm Nis}\pi_0^{\mathbb{A}^1}{\rm B}_{\et}G$ and how these relate to torsor classification and purity questions. For the rest of the section we will always only talk about groups $G$ where the conditions for Theorem~\ref{thm:h1et-vs-pi0a1} are satisfied and $\mathcal{H}^1_{\et}(G)$ is unramified. 

\subsection{Computation of the sheaves of $\mathbb{A}^1$-connected components}
If $\mathcal{H}^1_{\et}(G)$ is unramified, then the identification $\mathcal{H}^1_{\et}(G)\cong a_{\rm Nis}\pi_0^{\mathbb{A}^1}{\rm B}_{\et}G$ allows to us to compute the sheaf of $\mathbb{A}^1$-connected components of the classifying space ${\rm B}_{\et}G$. For an irreducible smooth scheme $X$ with function field $K$, the sections of $a_{\rm Nis}\pi_0^{\mathbb{A}^1}{\rm B}_{\et}G$ over $X$ are given by $G$-torsors over $\Spec K$ which extend over all the codimension 1 points of $X$. This classification question is significantly easier than the isomorphism classification of $G$-torsors over $X$ itself and can be made rather explicit in specific cases.  It turns out that these sheaves are \emph{non-trivial} in many cases, in contrast to the case of special groups. 

Our first examples are orthogonal groups. 


\begin{proposition}
  \label{prop:components-on}
  Let $F$ be a field of characteristic $\neq 2$. Then for any irreducible smooth $F$-scheme with function field $K$, we have
  \[
  \left(a_{\rm Nis}\pi_0^{\mathbb{A}^1}{\rm B}_{\et}{\rm O}(n)\right)(X)={\rm GW}_{\rm nr}(X)\cap {\rm H}^1_{\et}(K,{\rm O}(n))
  \]
  where the intersection is taken inside ${\rm GW}(K)$. This means that the sheaf $a_{\rm Nis}\pi_0^{\mathbb{A}^1}{\rm B}_{\et}{\rm O}(n)$ of $\mathbb{A}^1$-connected components of the classifying space of ${\rm O}(n)$ is the subsheaf of the sheaf of unramified Grothendieck--Witt groups consisting of classes of quadratic forms over $\Spec K$ which have a rank $n$ representative.
\end{proposition}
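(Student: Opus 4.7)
The plan is to combine Theorem~\ref{thm:h1et-vs-pi0a1} with the unramifiedness of $\mathcal{H}^1_{\et}({\rm O}(n))$ and then translate everything into the language of quadratic forms. By Proposition~\ref{prop:hoinv-on}, the homotopy invariance hypothesis of Theorem~\ref{thm:h1et-vs-pi0a1} is satisfied for $G={\rm O}(n)$, giving the identification $a_{\rm Nis}\pi_0^{\mathbb{A}^1}{\rm B}_{\et}{\rm O}(n)\cong \mathcal{H}^1_{\et}({\rm O}(n))$. Hence it suffices to describe $\mathcal{H}^1_{\et}({\rm O}(n))(X)$ in the claimed form inside ${\rm GW}(K)$.

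Next, I would invoke the local purity result of Panin--Pimenov \cite{panin:pimenov} for orthogonal groups in characteristic $\neq 2$ (see Remark~\ref{rem:purity}). Via Lemma~\ref{lem:purity-unramified}, this guarantees that $\mathcal{H}^1_{\et}({\rm O}(n))$ is unramified, so its sections over $X$ are computed by intersecting over codimension-$1$ points:
\[
\mathcal{H}^1_{\et}({\rm O}(n))(X)=\bigcap_{x\in X^{(1)}}\op{im}\left({\rm H}^1_{\et}(\mathcal{O}_{X,x},{\rm O}(n))\to {\rm H}^1_{\et}(K,{\rm O}(n))\right),
\]
using Proposition~\ref{prop:lnis-dvr} and the strong Grothendieck--Serre property to identify the sheafification at each DVR with the image of the corresponding \'etale $H^1$.

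Finally, I would match this intersection with ${\rm GW}_{\rm nr}(X)\cap{\rm H}^1_{\et}(K,{\rm O}(n))$ using the standard identification, for rings $R$ with $2\in R^\times$, of ${\rm H}^1_{\et}(R,{\rm O}(n))$ with isometry classes of non-degenerate rank-$n$ quadratic forms over $R$ together with the forgetful map to ${\rm GW}(R)$. The claim then reduces to a DVR-level statement: for each $x\in X^{(1)}$, a class $q\in {\rm H}^1_{\et}(K,{\rm O}(n))$ lifts to a rank-$n$ form over $\mathcal{O}_{X,x}$ if and only if its image in ${\rm GW}(K)$ lies in the image of ${\rm GW}(\mathcal{O}_{X,x})$.

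The main obstacle will be the nontrivial direction of this last equivalence: given a rank-$n$ form $q$ over $K$ whose ${\rm GW}(K)$-class extends to ${\rm GW}(\mathcal{O}_{X,x})$, one must produce an actual rank-$n$ form over $\mathcal{O}_{X,x}$ restricting to $q$. I expect this to follow from Witt cancellation over local rings in which $2$ is invertible, which identifies ${\rm GW}(\mathcal{O}_{X,x})$ with the group completion of the cancellative monoid of isometry classes of non-degenerate forms; any rank-$n$ element then has a representative by an honest rank-$n$ form over $\mathcal{O}_{X,x}$, and matching ranks and ${\rm GW}$-classes at the generic point forces this representative to restrict to $q$ up to isometry. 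Locating a precise reference for this classical fact over discrete valuation rings is really the only substantive input beyond the machinery already in place.
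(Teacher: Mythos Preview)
Your overall strategy matches the paper's proof exactly: reduce to $\mathcal{H}^1_{\et}({\rm O}(n))$ via Theorem~\ref{thm:h1et-vs-pi0a1}, use Panin--Pimenov local purity plus Lemma~\ref{lem:purity-unramified} to get unramifiedness, and then reduce to the DVR-level extension statement. The paper also uses Witt cancellation over $K$ to embed ${\rm H}^1_{\et}(K,{\rm O}(n))$ into ${\rm GW}(K)$, just as you do.

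The only substantive gap is in your proposed justification of the DVR step. Your claim that ``any rank-$n$ element [of ${\rm GW}(\mathcal{O}_{X,x})$] then has a representative by an honest rank-$n$ form'' does not follow from Witt cancellation alone, and is in fact false over general local rings: already over a field such as $\mathbb{R}$ the element $2\langle 1\rangle-\langle -1\rangle\in{\rm GW}(\mathbb{R})$ has rank $1$ but signature $3$, hence is not the class of any rank-$1$ form; the same obstruction persists over a DVR with residue field $\mathbb{R}$. Cancellativity of the monoid only tells you that the map from isometry classes into ${\rm GW}$ is injective, not that every positive-rank element is effective. What saves the situation is the additional hypothesis that the class in question is $[q]$ for an honest form $q$ over $K$, but your sketch does not exploit this.

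The paper handles this point by citing the lattice argument in \cite{scharlau}*{Theorem 2.2, Chapter 6}: vanishing of the second residue of $[q]$ at $x$ means one can choose a lattice $L\subset q$ with $L^{\#}/L=0$, and then $(L,q|_L)$ is the required non-degenerate rank-$n$ form over $\mathcal{O}_{X,x}$ restricting to $q$. That is the ``precise reference for this classical fact'' you were looking for, and it replaces the incorrect effectivity claim.
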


\begin{proof}
  By Theorem~\ref{thm:h1et-vs-pi0a1} it suffices to compute
  \[
  \mathcal{H}^1_{\et}({\rm O}(n))(X)=\bigcap_{x\in X^{(1)}}\mathcal{H}^1_{\et}({\rm O}(n))(\mathcal{O}_{X,x})=\bigcap_{x\in X^{(1)}}{\rm H}^1_{\et}(\mathcal{O}_{X,x},{\rm O}(n))\subseteq{\rm H}^1_{\et}(K,{\rm O}(n)),
  \]
  with the first equality being the unramifiedness of $\mathcal{H}^1_{\et}({\rm O}(n))$ from Lemma~\ref{lem:purity-unramified} and the purity result of Panin \cite{panin:purity} resp. its extension in \cite{panin:pimenov}, and the second equality follows as in the proof of Lemma~\ref{lem:purity-unramified}. We note that the natural map
  \[
    {\rm H}^1_{\et}(K,{\rm O}(n))\to {\rm GW}(K)
    \]
    taking an isometry class of a rank $n$ quadratic form over $K$ to its class in the Grothendieck--Witt ring is injective by Witt cancellation. Now a rank $n$ quadratic form extends over a dvr $\mathcal{O}_{X,x}\subseteq K$ if its class in the Witt ring of $K$ is unramified by \cite{scharlau}*{Theorem 2.2, Chapter 6}.\footnote{Note that the boundary of a class in ${\rm GW}(K)$ being zero means in the notation of loc.cit. that we can choose a lattice $L$ with $L^\#/L=0$. Then $L$ with its induced form is the required form over the dvr.} This implies that $\mathcal{H}^1_{\et}({\rm O}(n))(X)$ is the intersection of ${\rm H}^1_{\et}(K,{\rm O}(n))$ and the unramified Grothendieck--Witt group ${\rm GW}_{\rm nr}(X)$ inside ${\rm GW}(K)$. This proves the claim.
\end{proof}

\begin{example}\label{ex:pgl}
  As another example for a description of the sheaf of $\mathbb{A}^1$-connected components of a classifying space, a similar result is true for ${\rm PGL}_n$ over a field $F$ of characteristic $0$. For any irreducible smooth $F$-scheme with function field $K$, we have
  \[
  \left(a_{\rm Nis}\pi_0^{\mathbb{A}^1}{\rm B}_{\et}{\rm PGL}_n\right)(X)={\rm Br}(X)\cap {\rm H}^1_{\et}(K,{\rm PGL}_n)
  \]
  with the intersection taken inside ${\rm Br}(K)={\rm H}^1_{\et}(K,{\rm PGL}_\infty)$. This uses that the Brauer group is unramified and that a ${\rm PGL}_n$-torsor over $K$ extends over a dvr $\mathcal{O}_{X,x}\subseteq K$ if and only if the local invariant is trivial. 
\end{example}

\begin{example}
  We can also describe the sheaf of $\mathbb{A}^1$-connected components for the classifying space of the exceptional group ${\rm G}_2$ over a field of characteristic $\neq 2$, using the local purity result of Chernousov and Panin \cite{chernousov:panin} resp. its extension in \cite{panin:pimenov}. Over a field $K$ of characteristic $\neq 2$, we have
  \[
  {\rm H}^1_{\et}(K,{\rm G}_2)\cong {\rm H}^3_{\et}(K,\mu_2)_{\rm dec},
  \]
  i.e., ${\rm G}_2$-torsors over $K$ are classified by 3-fold Pfister forms over $K$, cf. \cite{serre}*{Th{\'e}or{\`e}me 9}. Actually, there is a bijection between ${\rm G}_2$-torsors and 3-fold Pfister forms over local rings in which 2 is invertible, cf. \cite{chernousov:panin}*{Remark 10}. In particular, a ${\rm G}_2$-torsor over a discretely valued field $K$ extends over the valuation ring $R\subseteq K$ if and only if the norm form extends to a 3-fold Pfister form over $R$. Consequently, the sections of $a_{\rm Nis}\pi_0^{\mathbb{A}^1}{\rm B}_{\et}{\rm G}_2$ over an irreducible smooth scheme $X$ with function field $K$ can be identified with the isomorphism classes of unramified 3-fold Pfister forms over $K$.
\end{example}

\begin{remark}
  The above examples should convince the reader that the identification of $a_{\rm Nis}\pi_0^{\mathbb{A}^1}{\rm B}_{\et}G$ allows to determine the sheaves of $\mathbb{A}^1$-connected components of classifying spaces ${\rm B}_{\et}G$ fairly explicitly in a number of interesting cases. All that is required is knowledge about the torsor classification over fields and dvrs.
\end{remark}

\subsection{Purity for torsors} Next, we want to discuss the relation between our results and purity questions for torsors. If Nisnevich-local purity holds for $G$-torsors, i.e., $\mathcal{H}^1_{\et}(G)$ is unramified, we can identify it as the target of the map in the definition of purity, cf. Definition~\ref{def:purity}. This way, we can reformulate purity as the surjectivity of the sheafification map for \'etale torsors:

\begin{proposition}
  \label{prop:purity-sheafification}
  Let $F$ be a field and $G$ be a reductive group over $F$ such that $\mathcal{H}^1_{\et}(G)$ is unramified. Then purity for $G$-torsors over a smooth $F$-scheme $X$ is equivalent to the surjectivity of the sheafification map ${\rm H}^1_{\et}(X,G)\to \mathcal{H}^1_{\et}(G)(X)$. If purity holds for $G$-torsors over $X$, then the sheafification map $\pi_0^{\mathbb{A}^1}{\rm B}_{\et}G(X)\to a_{\rm Nis}\pi_0^{\mathbb{A}^1}{\rm B}_{\et}G(X)$ is surjective.
\end{proposition}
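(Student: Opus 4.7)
The plan is to chain together the identifications already established in Lemma~\ref{lem:purity-unramified} and Theorem~\ref{thm:h1et-vs-pi0a1}, after which both assertions reduce to essentially unpacking definitions.

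For the equivalence, I would start with an irreducible smooth $F$-scheme $X$ with function field $K$ and observe that the sheafification map fits into the factorization
\[
{\rm H}^1_{\et}(X,G)\twoheadrightarrow \op{im}\!\left({\rm H}^1_{\et}(X,G)\to {\rm H}^1_{\et}(\Spec K,G)\right)\hookrightarrow \mathcal{H}^1_{\et}(G)(X),
\]
where the right-hand injection uses Proposition~\ref{prop:lnis-unramified-1}. Next, from Lemma~\ref{lem:purity-unramified} (more precisely, its proof) one has for any codimension one point $x\in X^{(1)}$ a bijection $\mathcal{H}^1_{\et}(G)(\mathcal{O}_{X,x})\cong \op{im}({\rm H}^1_{\et}(\mathcal{O}_{X,x},G)\to {\rm H}^1_{\et}(K,G))$ inside $\mathcal{H}^1_{\et}(G)(K)={\rm H}^1_{\et}(K,G)$. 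Combining this with the assumed unramifiedness of $\mathcal{H}^1_{\et}(G)$ gives
\[
\mathcal{H}^1_{\et}(G)(X)=\bigcap_{x\in X^{(1)}}\mathcal{H}^1_{\et}(G)(\mathcal{O}_{X,x})=\bigcap_{x\in X^{(1)}}\op{im}\!\left({\rm H}^1_{\et}(\mathcal{O}_{X,x},G)\to {\rm H}^1_{\et}(K,G)\right).
\]
But this last expression is exactly the target of the purity map in Definition~\ref{def:purity}, and the composed map agrees (up to restricting the target to the image) with the purity map. Hence surjectivity of ${\rm H}^1_{\et}(X,G)\to \mathcal{H}^1_{\et}(G)(X)$ holds if and only if purity holds for $G$-torsors over $X$. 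The case of reducible $X$ reduces to the irreducible components by the sheaf property.

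For the second assertion, I would invoke Theorem~\ref{thm:h1et-vs-pi0a1} to rewrite the target of the sheafification map as $a_{\rm Nis}\pi_0^{\mathbb{A}^1}{\rm B}_{\et}G(X)\cong \mathcal{H}^1_{\et}(G)(X)$, and note that the canonical map $\pi_0^{\mathbb{A}^1}{\rm B}_{\et}G(X)\to a_{\rm Nis}\pi_0^{\mathbb{A}^1}{\rm B}_{\et}G(X)$ factors as
\[
\pi_0^{\mathbb{A}^1}{\rm B}_{\et}G(X)\longrightarrow a_{\rm Nis}\pi_0^{\mathbb{A}^1}{\rm B}_{\et}G(X)\cong \mathcal{H}^1_{\et}(G)(X),
\]
receiving a compatible map from ${\rm H}^1_{\et}(X,G)$ via Construction~\ref{cons:can}. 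Since the composite ${\rm H}^1_{\et}(X,G)\to \mathcal{H}^1_{\et}(G)(X)$ is surjective under purity by the first part of the proposition, the map $\pi_0^{\mathbb{A}^1}{\rm B}_{\et}G(X)\to a_{\rm Nis}\pi_0^{\mathbb{A}^1}{\rm B}_{\et}G(X)$ is \emph{a fortiori} surjective, giving the claim.

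I do not expect any real obstacle here: the heavy lifting has been done in the identification of $\mathcal{H}^1_{\et}(G)$ on dvrs with the image of the restriction map (which uses Grothendieck--Serre and Proposition~\ref{prop:lnis-dvr}) and in Theorem~\ref{thm:h1et-vs-pi0a1}. The only mildly delicate point is keeping track of the fact that the purity map, the sheafification map, and the $\mathbb{A}^1$-comparison map all land compatibly inside ${\rm H}^1_{\et}(K,G)$, so that the various image subsets can be identified unambiguously.
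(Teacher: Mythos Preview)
Your proposal is correct and follows essentially the same route as the paper: identify $\mathcal{H}^1_{\et}(G)(X)$ with the intersection $\bigcap_{x\in X^{(1)}}\op{im}({\rm H}^1_{\et}(\mathcal{O}_{X,x},G)\to {\rm H}^1_{\et}(K,G))$ via unramifiedness and the dvr identification from the proof of Lemma~\ref{lem:purity-unramified}, then for the second claim use the commutative square coming from Construction~\ref{cons:can} together with Theorem~\ref{thm:h1et-vs-pi0a1}. The paper's proof is organized around the same commutative diagram you implicitly use, so there is no substantive difference.
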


\begin{proof}
  The target of the purity map in Definition~\ref{def:purity} is identified with
\[
\mathcal{H}^1_{\et}(G)(X)=\bigcap_{x\in X^{(1)}}\mathcal{H}^1_{\et}(G)(\mathcal{O}_{X,x}).
\]
We also have an identification
\[
\mathcal{H}^1_{\et}(G)(\mathcal{O}_{X,x})\cong \op{im}\left({\rm H}^1_{\et}(\mathcal{O}_{X,x},G)\to {\rm H}^1_{\et}(K,G)\right)
\]
as in the proof of Lemma~\ref{lem:purity-unramified}. With this identification, purity for $G$-torsors on $X$ is equivalent to the surjectivity of the sheafification map
\[
{\rm H}^1_{\et}(X,G)\to \mathcal{H}^1_{\et}(G)(X).
\]
Now consider the following commutative diagram
\[
\xymatrix{
  {\rm H}^1_{\et}(X,G) \ar[r] \ar[d] & \pi_0^{\mathbb{A}^1}{\rm B}_{\et}G(X)\ar[d] \\
  \mathcal{H}^1_{\et}(G)(X) \ar[r]_{\cong} & a_{\rm Nis}\pi_0^{\mathbb{A}^1}{\rm B}_{\et}G (X)
}
\]
The lower horizontal map is an isomorphism by Theorem~\ref{thm:h1et-vs-pi0a1}. As noted above, purity for torsors is equivalent to the surjectivity of the left vertical morphism.  Consequently, if purity is satisfied for $G$-torsors over a scheme $X$, the sheafification map $\pi_0^{\mathbb{A}^1}{\rm B}_{\et}G(X) \to a_{\rm Nis}\pi_0^{\mathbb{A}^1}{\rm B}_{\et}G (X)$ is surjective.
\end{proof}

\begin{example}
  Examples of the failure of global purity for torsors under ${\rm PGL}_p$ have been given in \cite{antieau:williams}. By the above discussion, these provide, for any prime $p$, examples of smooth affine complex varieties $X$ of dimension $2p+2$ such that the sheafification map
  \[
  {\rm H}^1_{\et}(X,{\rm PGL}_p)\to \mathcal{H}^1_{\et}({\rm PGL}_p)(X)\cong \left(a_{\rm Nis}\pi_0^{\mathbb{A}^1}{\rm B}_{\et}{\rm PGL}_p\right)(X)
  \]
  fails to be surjective. In particular, we also have examples of sections of the sheaf of $\mathbb{A}^1$-connected components of ${\rm B}_{\et}{\rm PGL}_p$ which fail to be represented by actual torsors. In fact, they also fail to be represented by motivic torsors as the next theorem explains.
\end{example}

\begin{theorem}\label{thm:aw-example} Let $k$ be a field of characteristic zero. Then for any prime $p$, there exists a smooth affine $k$-scheme of dimension $2p+2$ such that the map 
 \[
\pi_0^{\mathbb{A}^1}{\rm B}_{\et}{\rm PGL}_p(X) \to \mathcal{H}^1_{\et}({\rm PGL}_p)(X)\cong \left(a_{\rm Nis}\pi_0^{\mathbb{A}^1}{\rm B}_{\et}{\rm PGL}_p\right)(X)
  \]
  fail to be surjective
\end{theorem}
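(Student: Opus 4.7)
The plan is to upgrade the algebraic non-surjectivity established in the example preceding the theorem to a motivic non-surjectivity via Betti realization. Fix a prime $p$, and let $X_{\mathbb{C}}$ be the smooth affine complex variety of dimension $2p+2$ provided by the Antieau--Williams construction \cite{antieau:williams}, carrying an unramified Brauer class $\alpha \in {\rm Br}(X_{\mathbb{C}})[p]$. The crucial strengthening that we exploit is that the Antieau--Williams obstruction is already topological: the underlying topological Brauer class $\alpha^{\rm top}$ on the complex manifold $X_{\mathbb{C}}(\mathbb{C})$ has topological index strictly greater than $p$, so no topological ${\rm PGL}_p(\mathbb{C})$-bundle on $X_{\mathbb{C}}(\mathbb{C})$ represents $\alpha^{\rm top}$. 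By the example preceding the theorem (together with Theorem~\ref{thm:h1et-vs-pi0a1} and the unramifiedness of $\mathcal{H}^1_{\et}({\rm PGL}_p)$ afforded by Lemma~\ref{lem:purity-unramified} and local purity for ${\rm PGL}_p$), $\alpha$ defines a section of $(a_{\rm Nis}\pi_0^{\mathbb{A}^1}{\rm B}_{\et}{\rm PGL}_p)(X_{\mathbb{C}})$; the task is to show this section is not in the image of $\pi_0^{\mathbb{A}^1}{\rm B}_{\et}{\rm PGL}_p(X_{\mathbb{C}})$.

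I would argue by contradiction: suppose $\beta \in [X_{\mathbb{C}}, {\rm B}_{\et}{\rm PGL}_p]_{\mathbb{A}^1}$ lifts $\alpha$. Apply Betti realization ${\rm Re}_B \colon {\rm Spc}(\mathbb{C}) \to {\rm Spc}^{\rm top}$. Using the geometric ind-scheme model $U_\infty/{\rm PGL}_p$ of $L_{\rm mot}{\rm B}_{\et}{\rm PGL}_p$ recalled in Remark~\ref{rem:torsor}, together with the colimit-preservation and symmetric monoidality of ${\rm Re}_B$, one obtains the equivalence ${\rm Re}_B(L_{\rm mot}{\rm B}_{\et}{\rm PGL}_p) \simeq {\rm BPGL}_p(\mathbb{C})^{\rm top}$. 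Hence ${\rm Re}_B(\beta)$ is a homotopy class $X_{\mathbb{C}}(\mathbb{C}) \to {\rm BPGL}_p(\mathbb{C})^{\rm top}$, equivalently the isomorphism class of a topological ${\rm PGL}_p(\mathbb{C})$-bundle on $X_{\mathbb{C}}(\mathbb{C})$. By naturality of the boundary map to the Brauer group under Betti realization, combined with the standard comparison between algebraic and topological Brauer groups on smooth complex varieties, this bundle represents $\alpha^{\rm top}$, contradicting the Antieau--Williams obstruction.

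To obtain the statement over an arbitrary field $k$ of characteristic zero, I would combine the above with a spreading-out argument. The variety $X_{\mathbb{C}}$ and the Azumaya data witnessing $\alpha$ at the codimension-one localizations are defined over a finitely generated subfield of $\mathbb{C}$, hence descend to a scheme over a number field. A Lefschetz-principle argument then transports the example to $k$: pass to a sufficiently large algebraically closed field $K$ containing both $k$ and the defining subfield (and embeddable in $\mathbb{C}$), form the base change $X_k$ of the descended model, and note that any hypothetical motivic lift over $X_k$ base-changes to one over $X_K \subseteq X_{\mathbb{C}}$, again producing a topological lift of $\alpha^{\rm top}$ and a contradiction.

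The main technical obstacle is the careful verification that Betti realization sends the motivic classifying space $L_{\rm mot}{\rm B}_{\et}{\rm PGL}_p$ to ${\rm BPGL}_p(\mathbb{C})^{\rm top}$ in a manner compatible with the boundary map to the Brauer group, so that the topological bundle ${\rm Re}_B(\beta)$ really has topological Brauer class $\alpha^{\rm top}$. This essentially combines the Morel--Voevodsky--Totaro ind-scheme model of $L_{\rm mot}{\rm B}_{\et}G$ (\cite{morel:voevodsky}*{Section 4.2}, \cite{hoyois-cdh}*{Theorem 2.7}) with the standard GAGA-type comparisons, but tracking these models through the realization functor and matching the two boundary maps requires care. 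Once this compatibility is in place, the rest of the argument is essentially formal.
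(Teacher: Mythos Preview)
Your proposal is correct and follows essentially the same route as the paper: take the Antieau--Williams variety $X$ with its unramified Brauer class $\alpha$, view $\alpha$ as a section of $a_{\rm Nis}\pi_0^{\mathbb{A}^1}{\rm B}_{\et}{\rm PGL}_p$, assume a motivic lift $\beta$ exists, and derive a contradiction by Betti realization against the topological obstruction of \cite{antieau:williams}. The only differences are presentational: the paper phrases the contradiction as a forbidden factorization $L_{\rm mot}X \to L_{\rm mot}{\rm B}_{\et}{\rm PGL}_p \to {\rm B}^2_{\et}\mathbb{G}_m$ (and then realizes), whereas you realize $\beta$ directly and track its topological Brauer class; and where the paper simply asserts that the Antieau--Williams construction works over any characteristic zero field, you supply an explicit spreading-out/Lefschetz-principle reduction.
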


\begin{proof} Let $p > 0$ be fixed. Let $X$ be as in \cite{antieau:williams}*{Theorem 3.6}; we note that while $X$ is over the complex numbers here, the construction works over any characteristic zero field. This is a smooth affine scheme of dimension $2p+2$ equipped with a Brauer class $\alpha \in {\rm Br}(X)$ such that $\alpha|_{k(X)}$ is exact degree $p$; this is classified by a map of presheaves 
\begin{equation}\label{eq:pre}
X \xrightarrow{\alpha} {\rm B}^2_{\et}\mathbb{G}_m.
\end{equation}
By the identification in Example~\ref{ex:pgl}, $\alpha$ determines an element $\alpha_{{\rm mot}}|_K \in \left(a_{\rm Nis}\pi_0^{\mathbb{A}^1}{\rm B}_{\et}{\rm PGL}_p\right)(X)$. If this element does lift to an element $\alpha_{\rm mot} \in \pi_0^{\mathbb{A}^1}{\rm B}_{\et}{\rm PGL}_p(X)$, then we have constructed a nontrivial factorization of~\eqref{eq:pre} in the $\mathbb{A}^1$-homotopy category:
\[
L_{\rm mot}X \rightarrow L_{\rm mot}{\rm B}_{\et}{\rm PGL}_p \rightarrow {\rm B}^2_{\et}\mathbb{G}_m.
\]
This then Betti realizes to a factorization of the Betti realization of~\eqref{eq:pre} which cannot exist as explained in \cite{antieau:williams}*{Theorem 3.6}, see also \cite{antieau:williams}*{Theorem 3.11}.
\end{proof}

\begin{remark}
  The failure of surjectivity for the sheafification map on $\pi_0^{\mathbb{A}^1}{\rm B}_{\et}{\rm PGL}_p$ is based on the fact that in the topological realization the difference between actual torsors and motivic torsors vanishes: the realization of ${\rm B}_{\et}{\rm PGL}_p$ is a classifying space for ${\rm PGL}_p(\mathbb{C})$ and thus counterexamples to topological purity also imply counterexamples to purity for motivic $G$-torsors. It is likely that similar constructions can be made in characteristic $\neq p$ using \'etale realization.
\end{remark}

\begin{remark}\label{rem:ind-schemes} Let us now consider $G = {\rm PGL_n}$ where $n$ is not necessarily a prime. Over a field of characteristic zero, we can find examples of motivic spaces $X$ where the sheafification map is not surjective $\pi_0^{\mathbb{A}^1}{\rm B}_{\et}G(X)\to a_{\rm Nis}\pi_0^{\mathbb{A}^1}{\rm B}_{\et}G(X)$. Unfortunately, these examples are motivic spaces associated to ind-schemes (they are approximations of ${\rm B}_{\et}G$ in the sense of Totaro and Morel-Voeovodsky \cite{morel:voevodsky}) and thus does not lead to new counterexamples to purity in this situation. These examples also come from the work Antieau-Williams \cite{antieau:williams}
\end{remark}

\begin{proposition}\label{prop:ind-schemes} Let $k$ be a field of characteristic zero, then there exists an ind-scheme $X$ such that
\[
\pi_0^{\mathbb{A}^1}{\rm B}_{\et}G(X)\to a_{\rm Nis}\pi_0^{\mathbb{A}^1}{\rm B}_{\et}G(X)
\]
is not surjective.
\end{proposition}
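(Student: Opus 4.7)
The plan is to adapt the proof of Theorem~\ref{thm:aw-example} to the ind-scheme setting, using the fact that \cite{antieau:williams} produces counterexamples to the topological period-index problem for arbitrary $n$, not just primes $p$. Concretely, I would take $X$ to be a smooth ind-scheme over $k$ obtained as a Totaro--Morel--Voevodsky approximation of the classifying space ${\rm B}_{\et}\Gamma$ for an appropriate finite commutative $k$-group scheme $\Gamma$, the precise $\Gamma$ being determined by the topological examples from Antieau--Williams for degree $n$. Such an $X$ comes equipped with a canonical Brauer class $\alpha\in{\rm Br}(X)$ of period $n$, arising from a class in ${\rm H}^2_{\et}(X,\mu_n)$.

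First I would verify, using Example~\ref{ex:pgl} together with Theorem~\ref{thm:h1et-vs-pi0a1}, that $\alpha$ determines a section $\alpha_{\rm mot}\in (a_{\rm Nis}\pi_0^{\mathbb{A}^1}{\rm B}_{\et}{\rm PGL}_n)(X)$: the class is unramified by construction (it comes from \'etale cohomology of a smooth ind-scheme), and on each generic point the period-$n$ Brauer class is represented by a ${\rm PGL}_n$-torsor. Next, suppose for contradiction that $\alpha_{\rm mot}$ lifts to $\pi_0^{\mathbb{A}^1}{\rm B}_{\et}{\rm PGL}_n(X)=[X,{\rm B}_{\et}{\rm PGL}_n]_{\mathbb{A}^1}$. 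Exactly as in the proof of Theorem~\ref{thm:aw-example}, such a lift yields a nontrivial factorization in the $\mathbb{A}^1$-homotopy category
\[
L_{\rm mot}X\to L_{\rm mot}{\rm B}_{\et}{\rm PGL}_n\to {\rm B}^2_{\et}\mathbb{G}_{\rm m}.
\]
Applying complex Betti realization then produces a topological factorization of the cohomology class $\alpha$ through $B{\rm PU}_n(\mathbb{C})\to K(\mathbb{Z},3)$. The topological period-index obstruction from \cite{antieau:williams} asserts that for suitable choices of $\Gamma$ and $\alpha$ no such factorization exists, which gives the desired contradiction and shows that the sheafification map is not surjective on $X$.

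The main difficulty is twofold. First, one must pin down the specific finite group scheme $\Gamma$ and Brauer class $\alpha$ that encode the Antieau--Williams obstruction for a given possibly composite $n$; their results ensure this is possible but the choice depends on arithmetic of $n$. Second, one must be careful that the Totaro--Morel--Voevodsky model of ${\rm B}_{\et}\Gamma$, when passed through Betti realization, genuinely recovers the topological classifying space ${\rm B}\Gamma(\mathbb{C})$ compatibly with the identification of the Brauer class and its image in ${\rm H}^3(-;\mathbb{Z})$. Both issues are resolved by standard compatibilities of realization with Totaro-style approximations, but require bookkeeping that Theorem~\ref{thm:aw-example} avoided by working with a genuine smooth affine variety.
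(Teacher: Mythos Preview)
Your overall strategy coincides with the paper's: take a Totaro--Morel--Voevodsky ind-scheme model of a classifying space carrying a canonical $\mu_n$-valued degree-two class, observe that this gives a section of $a_{\rm Nis}\pi_0^{\mathbb{A}^1}{\rm B}_{\et}{\rm PGL}_n$, and argue that any lift to $\pi_0^{\mathbb{A}^1}{\rm B}_{\et}{\rm PGL}_n(X)$ would Betti-realize to a factorization forbidden by Antieau--Williams. So far, so good.

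The substantive difference, and the one place where your proposal has a gap, is in the choice of group. You posit a \emph{finite commutative} group scheme $\Gamma$ and assert that ``their results ensure this is possible''. But \cite{antieau:williams}*{Theorem 3.11}, which is the input the paper invokes, is not stated for finite groups: it is precisely the statement that for $m>1$ dividing $n$ the universal class on $B({\rm SU}_m/\mu_n)$ does not factor through $B{\rm PU}_n$. Accordingly, the paper takes $H={\rm SL}_m/\mu_n$ (a positive-dimensional reductive group), sets $X$ to be a geometric approximation of ${\rm B}_{\et}H$, and uses the class coming from the exact sequence $1\to\mu_n\to{\rm SL}_m\to H\to 1$, so that Betti realization lands exactly in the setting of \cite{antieau:williams}*{Theorem 3.11}. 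If you insist on a finite $\Gamma$, you would need an independent argument (or a different theorem from \cite{antieau:williams}) showing the relevant topological obstruction is nonzero on $B\Gamma(\mathbb{C})$; this is not supplied. Replacing your unspecified finite $\Gamma$ by $H={\rm SL}_m/\mu_n$ removes this difficulty and makes your argument match the paper's. A minor point: the paper works with the target ${\rm B}^2_{\et}\mu_n$ rather than ${\rm B}^2_{\et}\mathbb{G}_{\rm m}$, which is what lines up cleanly with the Antieau--Williams obstruction.
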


\begin{proof} Let $m > 1$ be an integer dividing $n$ and let $H$ be the algebraic group ${\rm SL_{m}}/\mu_n$. Let $X$ be an algebraic approximation to ${\rm B}_{\et}H$ as in \cite{morel:voevodsky}*{Proposition 4.2.6}. In particular $X$ comes equipped with a canonical ``Brauer class" $\alpha$; more precisely it is a map of presheaves $\alpha: X \rightarrow {\rm B}^2_{\et}\mu_n$ induced by the exact sequence of \'etale sheaf of groups $1 \rightarrow \mu_n \rightarrow {\rm SL}_m \rightarrow H \rightarrow 1$. By the identification in Example~\ref{ex:pgl}, extended by filtered colimits to ind-schemes, we $\alpha$ determines an element of $a_{\rm Nis}\pi_0^{\mathbb{A}^1}{\rm B}_{\et}G(X)$. If $\alpha$ does lift to $\pi_0^{\mathbb{A}^1}{\rm B}_{\et}G(X)$, then we would obtain a factorization in the $\mathbb{A}^1$-homotopy category of the map $\alpha$ as
\[
X \rightarrow {\rm B}_{\et}{\rm PGL_n} \rightarrow {\rm B}^2_{\et}\mu_n.
\]
But this contradicts \cite[Theorem 3.11]{antieau:williams} which states that no factorization can exist after taking Betti realizations. 
\end{proof}

\begin{example}
  Similarly, we can ask for the failure of global purity for ${\rm O}(n)$-torsors. From the description of $\mathcal{H}^1_{\et}({\rm O}(n))$ in Proposition~\ref{prop:components-on}, one source for the failure of purity for ${\rm O}(n)$ torsors is the failure of purity for the Grothendieck--Witt group. For an irreducible smooth scheme $X$ with function field $K$, any class in ${\rm W}_{\rm nr}(X)\cap {\rm H}^1_{\et}(K,{\rm O}(n))$ which is not in the image of the natural map ${\rm W}(X)\to {\rm W}_{\rm nr}(X)$ provides a counterexample to purity for ${\rm O}(n)$-torsors. The failure of surjectivity of the morphism ${\rm W}(X)\to {\rm W}_{\rm nr}(X)$ can be studied using the Gersten--Witt spectral sequence \cite{balmer:walter}. By the weak purity theorem of loc.cit., such phenomena cannot occur in dimensions $\leq 4$, and the unique obstruction to purity of Witt groups for smooth schemes of dimension $\leq 8$ is a differential ${\rm W}_{\rm nr}(X)\to {\rm H}^5(X,\mathbf{W})$ in the Gersten--Witt spectral sequence. Not much seems to be known about examples where this differential is non-trivial.
\end{example}

More generally, the formulation in Proposition~\ref{prop:purity-sheafification} now opens up the possibility to use motivic homotopy methods for investigation of counterexamples to the global purity question for $G$-torsors. By a conjecture of Antieau and Williams, cf. \cite{antieau:williams}*{Conjecture 1.2}, purity should fail for $G$-torsors over some smooth affine variety if $G$ is a non-special semisimple group. By Proposition~\ref{prop:purity-sheafification}, to get counterexamples to purity for such a group $G$, it would suffice to find examples of smooth affine schemes $X$ over a field $F$ such that the sheafification map
\[
\pi_0^{\mathbb{A}^1}{\rm B}_{\et}G(X) \to a_{\rm Nis}\pi_0^{\mathbb{A}^1}{\rm B}_{\et}G (X)
\]
fails to be surjective. We indicate how this could be done for ${\rm PGL_n}$ in the next remark, noting that it is subject to a future investigation.


\subsection{Towards the classification of (motivic) $G$-torsors} Finally, we can once more have a look at the diagram employed in the proof of Proposition~\ref{prop:purity-sheafification} to discuss the relations between torsor classification and the homotopy theory of the classifying spaces:
\[
\xymatrix{
  {\rm H}^1_{\et}(X,G) \ar[r] \ar[d] & \pi_0^{\mathbb{A}^1}{\rm B}_{\et}G(X)\ar[d] \\
  \mathcal{H}^1_{\et}(G)(X) \ar[r]_{\cong} & a_{\rm Nis}\pi_0^{\mathbb{A}^1}{\rm B}_{\et}G (X)
}
\]
The upper left corner is about the isomorphism classification of $G$-torsors on a smooth scheme $X$, the upper right corner is about the homotopy classification of maps into the geometric classifying spaces ${\rm B}_{\et}G$. The lower part of the diagram is about the sheafified problems which we now have identified as the classification of unramified torsors over the function field.

The left-hand vertical map relates the isomorphism classification of torsors over $X$ with the classification of unramified torsors over the function field. As noted in Proposition~\ref{prop:purity-sheafification}, surjectivity of that map is the question of purity for $G$-torsors over $X$, and that is expected to fail in general (though at this point we only have examples for ${\rm PGL}_p$). The sheafification map ${\rm H}^1_{\et}(X,G)\to \mathcal{H}^1_{\et}(G)(X)$ is also not going to be injective in general: all the rationally trivial torsors over $X$ map to the trivial unramified torsor in $\mathcal{H}^1_{\et}(G)(X)$. It would certainly be interesting to classify $G$-torsors over $X$ mapping to a given class in $\mathcal{H}^1_{\et}(G)(X)$, but at this point there don't seem to be methods around to approach this question.

The right-hand vertical map relates the presheaf of $\mathbb{A}^1$-connected components of ${\rm B}_{\et}G$ with its sheafification. By Theorem~\ref{thm:aw-example}, the sheafification map
\[
\pi_0^{\mathbb{A}^1}{\rm B}_{\et}G(X)\to a_{\rm Nis}\pi_0^{\mathbb{A}^1}{\rm B}_{\et}G(X)
\]
also fails to be surjective in general. The sheafification map is generally not injective, since the representability results from \cite{affine-representability-3} imply that the preimage of the basepoint in $a_{\rm Nis}\pi_0^{\mathbb{A}^1}{\rm B}_{\et}G(X)$ is given by isomorphism classes of rationally trivial torsors. Again, it would be interesting to understand the fibers of the sheafification map as well as conditions for realizability of classes in $a_{\rm Nis}\pi_0^{\mathbb{A}^1}{\rm B}_{\et}G(X)$ in terms of maps $X\to {\rm B}_{\et}G$. Possibly some version of obstruction theory for non-connected spaces could help here.

At last, the top horizontal map in the diagram relates the isomorphism classification of $G$-torsors to the $\mathbb{A}^1$-homotopy classification of maps $X\to {\rm B}_{\et}G$. This is essentially the question what the classifying space ${\rm B}_{\et}G$ actually classifies. Unfortunately, this map will not generally be a bijection. It fails to be injective because $\pi_0^{\mathbb{A}^1}{\rm B}_{\et}G$ is homotopy invariant whereas for non-special groups ${\rm H}^1_{\et}(-,G)$ is not generally homotopy invariant and therefore the map ${\rm H}^1_{\et}(X,G)\to \pi_0^{\mathbb{A}^1}{\rm B}_{\et}G(X)$ forgets about the counterexamples to homotopy invariance for \'etale torsors. At this point, it seems nothing is known regarding the surjectivity of the natural map ${\rm H}^1_{\et}(X,G)\to \pi_0^{\mathbb{A}^1}{\rm B}_{\et}G(X)$. For instance, we don't know if there exists a smooth affine scheme $X$ and a motivic ${\rm PGL}_n$-torsor over $X$ which isn't represented by an actual ${\rm PGL}_n$-torsor over $X$.

\end{document}